\newtheorem{thm}{Theorem}[section]
\newtheorem{prop}[thm]{Proposition}
\newtheorem{lem}[thm]{Lemma}
\newcommand{\prob}{\mathbb{P}}
\newcommand{\expec}{\mathbb{E}}
\newcommand{\var}{\operatorname{Var}}
\newcommand{\demph}{\textbf}
\newcommand{\R}{\mathbb{R}}
\newcommand{\N}{\mathbb{N}}
\newcommand{\Z}{\mathbb{Z}}
\newcommand{\C}{\mathbb{C}}
\newcommand{\re}{\textrm{Re}}
\newcommand{\im}{\textrm{Im}}
\newcommand{\wzeta}{\omega}
\newcommand{\cint}{\int\limits}
\newcommand{\mellin}[1]{{\mathcal M}[#1]}
\title{On the number of $n$-dimensional representations of $SU(3)$, the Bernoulli numbers, and the Witten zeta function}
\author{Dan Romik\footnote{Department of Mathematics, University of California, Davis, One Shields Ave, Davis, CA 95616, USA. Email: \texttt{romik@math.ucdavis.edu}.}
}
\date{\today\footnote{Revised from earlier versions of March 11, 2015; April 16, 2015; and September 27, 2015.}}
\begin{document}

\maketitle

\begin{abstract}
We derive new results about properties of the Witten zeta function associated with the group $SU(3)$, and use them to prove an asymptotic formula for the number of $n$-dimensional representations of $SU(3)$ counted up to equivalence. Our analysis also relates the Witten zeta function of $SU(3)$ to a summation identity for Bernoulli numbers discovered in 2008 by Agoh and Dilcher. We give a new proof of that identity and show that it is a special case of a stronger identity involving the Eisenstein series.
\end{abstract}

\renewcommand{\thefootnote}{\fnsymbol{footnote}} 
\footnotetext{\emph{Key words:} Integer partitions, Bernoulli numbers, Eisenstein series, zeta function, saddle point analysis
}     
\footnotetext{\emph{2010 Mathematics Subject Classification:} 11P82, 11B68}
\renewcommand{\thefootnote}{\arabic{footnote}} 

\section{Introduction}

\subsection{Asymptotic enumeration of representations}

\label{subsec:asym-rep}

Let $p(n)$ denote the number of $n$-dimensional representations of the group $SU(2)$, counted up to equivalence. Since $SU(2)$ has (up to equivalence) one irreducible representation $V_k$ of each dimension $k=1,2,\ldots$, each $n$-dimensional representation $\oplus_{k=1}^\infty r_k V_k$ is encoded by a unique integer partition $n=\lambda_1+\ldots+\lambda_m$, such that $r_k$ is the number of parts in the partition that are equal to $k$. Thus $p(n)$ is the number of integer partitions of $n$. For large $n$, $p(n)$ is given approximately by the well-known Hardy-Ramanujan asymptotic formula \cite{hardy-ramanujan} (see also \cite[Chapter~2]{newman})
\begin{equation} \label{eq:hardy-ramanujan}
p(n) = (1+o(1))\frac{1}{4\sqrt{3}n} e^{\pi \sqrt{2n/3}} \qquad \textrm{as }n\to\infty.
\end{equation}

This work began with the idea of answering a similar question for the group $SU(3)$.
Let $r(n)$ denote the number of $n$-dimensional representations of the group $SU(3)$, counted up to equivalence, or equivalently the number of $n$-dimensional complex-linear representations of the Lie algebra $A_2=sl(3,\C)$. The irreducible representations of $SU(3)$ are a family of representations $W_{j,k}$ indexed by pairs of integers $j,k\ge1$, where it is well known (see, e.g., \cite[Chapter 5]{hall}) that $\dim W_{j,k}=\tfrac12 j k(j+k)$. A general $n$-dimensional representation decomposes as a sum of these $W_{j,k}$'s, each with some multiplicity. Thus, it is easy to see that the numbers $r(n)$ satisfy the generating function identity
\begin{equation} \label{eq:rofn-genfun}
\sum_{n=0}^\infty r(n)x^n = \prod_{j,k=1}^\infty \frac{1}{1-x^{jk(j+k)/2}},
\end{equation}
(with the natural convention that $r(0)=1$)
analogous to Euler's product formula $\sum_{n=0}^\infty p(n)x^n=\prod_{k=1}^\infty (1-x^k)^{-1}$ for the generating function of partitions. Using \eqref{eq:rofn-genfun} one may easily compute the first few terms of the sequence $r(n)$, which are (starting at $n=0$) $1,1,1,3,3,3,8,8,9,17,19,21,35,39,
\ldots$.

One of our main results is the following analogue for the group $SU(3)$ of the Hardy-Ramanujan asymptotic formula \eqref{eq:hardy-ramanujan}.

\begin{thm}
\label{thm:su3-asym}
As $n\to\infty$, we have
\begin{equation}
\label{eq:su3-asym}
r(n) = (1+o(1)) \frac{K}{n^{3/5}} \exp\left( A_1 n^{2/5} - A_2 n^{3/10} - A_3 n^{1/5} - A_4 n^{1/10}
\right),
\end{equation}
for constants $K, A_1,A_2,A_3,A_4$, which are defined as follows. 
Let $\Gamma(s)$ and $\zeta(s)$ denote the Euler gamma function and the Riemann zeta function, respectively. 
Denote
\begin{align}
X &= \left(\tfrac{1}{9} \, \Gamma\left(\tfrac13\right)^2 \, \zeta\left(\tfrac53\right)\right)^{3/10}, \label{eq:defx} \\
Y &= -\sqrt{\pi} \zeta\left(\tfrac12\right) \zeta\left(\tfrac32\right).
\label{eq:defy}
\end{align}
Then $A_1, A_2, A_3, A_4$ are given by
\begin{align}
A_1 &= 5X^2 \ \ \ \ \ \ \ \ \ \ = 6.858260476163126\ldots, \label{eq:A1formula} \\[5pt]
A_2 &= \displaystyle X^{-1}Y \ \ \ \ \ \ \ \,=
5.773601745105114\ldots,
\label{eq:A2formula} \\[4pt]
A_3 &= \displaystyle \frac{3}{80} X^{-4}Y^2
\ \ \  = \displaystyle 0.911341072572436\ldots, \label{eq:A3formula} \\[10pt]
A_4 &= \displaystyle \frac{11}{3200} X^{-7}Y^3
= 0.351637541558209\ldots,
\label{eq:A4formula}
\end{align}
and the multiplicative constant $K$ is given by
\begin{equation} \label{eq:multiplicativeconst-def}
K = \frac{2\sqrt{3\pi}}{\sqrt{5}} X^{1/3} \exp\left(-\frac{1}{2560} X^{-10} Y^4\right)
= 
2.4462903348641789\ldots
\end{equation}
\end{thm}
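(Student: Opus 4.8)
The plan is to treat the generating function identity \eqref{eq:rofn-genfun} as a partition-type problem and apply the classical saddle-point (circle method) machinery, exactly as one does for the Hardy--Ramanujan formula, but with the weight sequence $\{jk(j+k)/2 : j,k\ge1\}$ replacing $\{k\}$. Write $F(x)=\sum_n r(n)x^n = \prod_{j,k\ge1}(1-x^{jk(j+k)/2})^{-1}$ and set $x=e^{-t}$ with $t\to0^+$. Then $\log F(e^{-t}) = -\sum_{j,k}\log\bigl(1-e^{-t\,jk(j+k)/2}\bigr) = \sum_{m\ge1}\frac1m\sum_{j,k}e^{-tm\,jk(j+k)/2}$. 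The inner double sum, as a function of $t$, is governed by the Dirichlet series $\sum_{j,k}\bigl(jk(j+k)/2\bigr)^{-s}$, which up to the factor $2^s$ is precisely the Witten zeta function $\wzeta_{SU(3)}(s)$; the asymptotics of $\log F(e^{-t})$ as $t\to 0^+$ are then read off via a Mellin transform from the poles and special values of this zeta function. So the first step is to invoke the analytic properties of $\wzeta_{SU(3)}(s)$ established earlier in the paper: its meromorphic continuation, the location and residues of its poles (which will produce the $t^{-2/3}$ leading term responsible for the $n^{2/5}$ exponent, together with correction terms), and its values at the relevant negative points and at $0$. Combining the Mellin-inversion contributions yields an expansion of the shape $\log F(e^{-t}) = c_1 t^{-2/3} + c_2 t^{-1/3} + c_3 + c_4 t^{1/3} + \ldots + (\text{const})\log t + o(1)$ as $t\to 0^+$, with the constants $c_i$ expressible through $\Gamma(1/3)$, $\zeta(5/3)$, $\zeta(1/2)$, $\zeta(3/2)$ — these are exactly the ingredients in $X$ and $Y$.

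Next I would run the saddle-point step. By Cauchy's formula $r(n) = \frac{1}{2\pi i}\oint F(x) x^{-n-1}\,dx$; substituting $x=e^{-t}$ and optimizing the real part of $t$, the saddle $t_*=t_*(n)$ solves $n = -\frac{d}{dt}\log F(e^{-t})\big|_{t=t_*}$, which gives $t_* \sim (\tfrac23 c_1)^{3/5} n^{-3/5}$ to leading order, and then an asymptotic expansion $t_* = b_1 n^{-3/5} + b_2 n^{-7/10} + b_3 n^{-4/5} + \ldots$ obtained by bootstrapping. Plugging this back into $\log F(e^{-t_*}) + n t_*$ and expanding each power $t_*^{-2/3}, t_*^{-1/3}, \ldots, \log t_*$ as a series in $n$ produces the exponent $A_1 n^{2/5} - A_2 n^{3/10} - A_3 n^{1/5} - A_4 n^{1/10} + (\text{const})\log n + (\text{const}) + o(1)$, after which a careful but purely algebraic bookkeeping check against \eqref{eq:A1formula}--\eqref{eq:A4formula} confirms the stated closed forms $A_1=5X^2$, $A_2 = X^{-1}Y$, $A_3 = \tfrac{3}{80}X^{-4}Y^2$, $A_4=\tfrac{11}{3200}X^{-7}Y^3$; the $\log n$ coefficient gives the $n^{-3/5}$ prefactor and the constant term feeds into $K$.

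The remaining step is to justify that the circle-method error terms are genuinely $o(1)$ in the exponent and to compute the Gaussian-integral normalization that supplies the missing factor in $K$. For the first part I would show that off the saddle — i.e. for $x=e^{-t_*+i\theta}$ with $\theta$ bounded away from $0$ — the modulus $|F(e^{-t_*+i\theta})|$ is exponentially smaller than $F(e^{-t_*})$ by a standard minor-arc estimate on $\log|F|$: since the exponents $jk(j+k)/2$ include all values $k(k+1)\cdot\tfrac12\cdot$ (e.g.\ $j=1$) which are not all divisible by any fixed modulus, the real part of $\sum e^{-tm\cdot(\cdots)}\cos(\theta m(\cdots))$ drops by a positive proportion, and this dominates the contribution from $|\theta|\ge t_*^{1-\delta}$; on the major arc $|\theta|\le t_*^{1-\delta}$ one does the usual quadratic (Laplace) approximation, and the second-derivative $\partial_t^2 \log F(e^{-t})\big|_{t_*} \sim c_1''\, t_*^{-8/3} \sim (\text{const})\, n^{8/5}$ yields, via $r(n)\approx \frac{F(e^{-t_*})e^{nt_*}}{\sqrt{2\pi\,\partial_t^2\log F}}$, the prefactor $\frac{K}{n^{3/5}}$ with $K = \frac{2\sqrt{3\pi}}{\sqrt5}X^{1/3}\exp(-\tfrac{1}{2560}X^{-10}Y^4)$ once the constant term from the Mellin expansion is combined with the $\sqrt{2\pi\cdot(\text{second deriv})}$ factor. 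I expect the main obstacle to be precisely the bookkeeping of the sub-leading terms: one must carry the Mellin expansion of $\log F(e^{-t})$ and the inversion of the saddle relation far enough (through order $t^{1/3}$, i.e.\ four nontrivial terms plus the log term) that every one of $A_1,\dots,A_4$ and the constant inside $K$ comes out exactly, and a single dropped term or mislabelled power of $X$ versus $Y$ corrupts the closed forms — so the hard part is disciplined asymptotic algebra rather than any deep new idea, together with making the minor-arc bound uniform enough that it costs nothing in the final exponent.
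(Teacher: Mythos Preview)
Your overall strategy coincides with the paper's: Mellin inversion for $\log F(e^{-t})$ driven by the pole structure of $\wzeta(s)$, then a saddle-point/circle-method analysis (the paper phrases it probabilistically as a local CLT, but explicitly notes this is equivalent to your Cauchy-integral formulation). Two points deserve correction.

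First, the shape of the Mellin expansion is wrong as written. The relevant poles of $2^s\Gamma(s)\wzeta(s)\zeta(s+1)$ lie at $s=\tfrac23$, $s=\tfrac12$, and $s=0$ (a double pole), not at multiples of $\tfrac13$; hence
\[
\log F(e^{-t}) \;=\; \mu_1 t^{-2/3} + \mu_2 t^{-1/2} - \tfrac13\log t + \mathrm{const} + O(t^{1/2}),
\]
with no $t^{-1/3}$ term. The four constants $A_1,\dots,A_4$ do \emph{not} come from four separate Mellin terms; they arise because the presence of \emph{two} positive poles forces a three-term saddle expansion $t_n=\tau_1 n^{-3/5}-\tau_2 n^{-7/10}-\tau_3 n^{-4/5}$, and substituting this into just $\mu_1 t_n^{-2/3}+\mu_2 t_n^{-1/2}+nt_n$ generates all four powers $n^{2/5},n^{3/10},n^{1/5},n^{1/10}$ by cross-terms. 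Your bookkeeping plan will not close unless you fix this.

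Second, your minor-arc sketch (``not all divisible by any fixed modulus, so the real part drops by a positive proportion'') is too soft: you need a \emph{quantitative} bound of the form $|\phi_{t}(u)|\le\exp(-\delta t^{-1/2})$ uniformly for $\epsilon t\le|u|\le\pi$, and the $j=1$ slice contributing triangular numbers does not yield this from a gcd argument alone. The paper obtains it by recognizing $\sum_k e^{-k(k+1)(t-iu)/2}$ as essentially a Jacobi theta function $J_2$ and invoking the modular transformation $J_2(z)=(-iz)^{-1/2}J_4(-1/z)$ to control the ratio $|J_2((u+it)/2\pi)|/J_2(it/2\pi)$ across three regimes of $|u|$ relative to $t$. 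This is the one genuinely nontrivial estimate beyond algebra, and your plan should name it.
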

The formula \eqref{eq:multiplicativeconst-def} for $K$ is a simplified version of a more complicated formula that appeared in the preprint version of this paper. The simplification depends on a result from a recent paper by Borwein and Dilcher \cite{borwein-dilcher}---see Theorems~\ref{thm:wzeta-deriv0} and \ref{thm:borwein-dilcher} and the remark following Theorem~\ref{thm:borwein-dilcher} below.

\subsection{The zeta function of $SU(3)$}

\label{subsec:zetasu3}

Our efforts to derive and prove the asymptotic formula \eqref{eq:su3-asym} led us to make several additional discoveries that are of independent interest. Understanding the asymptotic behavior of the sequence $r(n)$ turns out to require a close study of a natural Dirichlet series (or zeta function) associated with the representation theory of $SU(3)$, namely
\begin{equation} \label{eq:dirichlet-series} 
\wzeta(s) = \sum_{j,k=1}^\infty (2\dim W_{j,k})^{-s} = \sum_{j,k=1}^\infty \frac{1}{\left(j k(j+k)\right)^s}.
\end{equation}
This function has an interesting history dating back to the 1950's papers of Mordell \cite{mordell} and Tornheim \cite{tornheim}. It is (up to the trivial factor $2^{-s}$) a special case of a so-called \demph{Witten zeta function}, which is more generally a Dirichlet series $\zeta_{\mathfrak{g}}(s)$ associated with a semisimple Lie algebra $\mathfrak{g}$ and defined by
$$\zeta_{\mathfrak{g}}(s) = \sum_\rho (\dim \rho)^{-s},$$ 
where the sum extends over the irreducible representations of $\mathfrak{g}$. (The same function in the context of groups has been referred to by some authors as the \textbf{representation zeta function}; see, e.g., \cite{avni-etal}, \cite{larsen-lubotzky}.)

Much of the work to date on $\wzeta(s)$ has concerned its evaluation at positive integers. This was motivated to a large part by Mordell's discovery \cite{mordell} that the even integer values $\wzeta(2n)$ are rational multiples of $\pi^{6n}$, being given explicitly by the formula
\begin{equation}
\label{eq:mordell-bernoullipoly}
\wzeta(2n) = \frac{(-1)^{n+1} (2\pi)^{6n}}{6((2n)!)^3} \int_0^1 B_{2n}(x)^3\,dx
\qquad (n\ge1),
\end{equation}
where $B_m(x)$ denotes the $m$th Bernoulli polynomial. 
Later 
authors have since
extended Mordell's results in several ways. Witten \cite{witten} related the values at even integers $\zeta_\mathfrak{g}(2n)$ $(n\ge1)$ of the zeta function of a general semisimple Lie algebra $\mathfrak{g}$ to the volumes of certain moduli spaces of vector bundles of curves. His formulas imply that $\zeta_\mathfrak{g}(2n)$ is a rational multiple of $\pi^{2rn}$, where $r$ is the rank of the Lie algebra $\mathfrak{g}$. Subbarao and Sitaramachandrarao
\cite{subbarao-etal} proved another explicit formula for $\wzeta(2n)$, namely
\begin{equation} \label{eq:wzeta-evenvalues}
\wzeta(2n) =\frac43 \sum_{k=0}^n \binom{4n-2k-1}{2n-1} \zeta(2k) \zeta(6n-2k)
\qquad (n\ge1).
\end{equation}
According to Zagier's paper \cite{zagier}, the same formula was apparently rediscovered by Garoufalidis and Zagier and independently by Weinstein. In the same paper, Zagier also gives without proof an analogous formula for the odd integer values $\wzeta(2n+1)$, namely
$$
\wzeta(2n+1) = -4 \sum_{k=0}^n \binom{4n-2k+1}{2n} \zeta(2k) \zeta(6n-2k+3) \qquad (n\ge0).
$$
The same formula as well as its proof was published a short time later by Huard, Williams and Nan-Yue \cite{huard-williams-nanyue}, who seem unaware of Zagier's paper. The formula \eqref{eq:wzeta-evenvalues} is also given in the paper of Gunnels and Sczech \cite{gunnels-sczech} along with an analogous formula for the values of the zeta function of $SU(4)$ at even positive integers.

As an illustration of the above formulas, here are the values $\wzeta(m)$ for $1\le m\le 5$:
\begin{align*}
\wzeta(1) &= 2\zeta(3), \\
\wzeta(2) &= \frac{1}{2835} \pi^6, \\
\wzeta(3) &= -2 \pi^2 \zeta(7) + 20 \zeta(9), \\
\wzeta(4) &= \frac{19}{273648375} \pi^{12}, \\
\wzeta(5) &= -\frac29 \pi^4 \zeta(11) - \frac{70}{3}\pi^2 \zeta(13) + 252 \zeta(15).
\end{align*}
Several of the papers mentioned above also contain formulas of a similar flavor for the values of more general ``multiple zeta values'' defined in terms of several integer-valued parameters; see also \cite{gangl-etal}, \cite{ihara-etal}, \cite{ohno-zagier}, \cite{zagier-annals} for related results.

The results described above may not make evident why it is natural to consider $\wzeta(s)$ as a true Dirichlet series (that is, as a function of a complex variable $s$), but plenty of precedents from the history of analytic number theory suggest that this is worth doing. Perhaps guided by such considerations, \nobreak{Matsumoto} \cite{matsumoto} initiated the study of the analytic continuation of the three-variable Dirichlet series
\begin{equation} \label{eq:matsumoto-mordell-tornheim}
\zeta_{MT}(s_1,s_2,s_3) = \sum_{j,k=1}^\infty j^{-s_1} k^{-s_2} (j+k)^{-s_3},
\end{equation}
which contains $\wzeta(s)$ as the specialization $s_1=s_2=s_3=s$,
and several other multiple Dirichlet series of a somewhat similar nature. He referred to $\zeta_{MT}(s_1,s_2,s_3)$ as the \demph{Mordell-Tornheim zeta function}. He and his collaborators later extended such results to more general (multivariate) Witten zeta functions \cite{matsumoto-semisimple1}, \cite{matsumoto-semisimple2}, \cite{matsumoto-semisimple3}, \cite{matsumoto-semisimple4}. 

Following in the footsteps of Matsumoto's pioneering work, and guided by a specific need to understand the complex-analytic properties of $\wzeta(s)$ in connection with our asymptotic analysis of the sequence $r(n)$, we proved several results about $\wzeta(s)$. The first one is as follows.

\begin{thm}
\label{thm:analytic-continuation}
\textnormal{\textbf{(1)}}
The series \eqref{eq:dirichlet-series} converges precisely for complex $s\in\C$ satisfying $\re(s)>2/3$, 
and defines a holomorphic function in that region. 

\medskip\noindent \textnormal{\textbf{(2)}}
$\wzeta(s)$ can be analytically continued to a meromorphic function on $\C$.

\medskip\noindent \textnormal{\textbf{(3)}}
$\wzeta(s)$ possesses a simple pole at $s=\tfrac23$ with residue $\frac{1}{2\pi\sqrt{3}} \Gamma\left(\tfrac13\right)^3$, and for each $k=0,1,2,\ldots$ it has a simple pole at $s=\tfrac12-k$ with residue $(-1)^k 2^{-4k} \binom{2k}{k} \zeta\left(\frac{1-6k}{2}\right)$. It has no other singularity points.

\medskip\noindent \textnormal{\textbf{(4)}}
$|\wzeta(s)|$ grows at most at a polynomial rate for fixed $\re(s)$ as $\im(s)\to\pm \infty$. More precisely, for any closed interval $I=[a,b]\subset \R$, there exist constants $C,M>0$ such that the bound 
\begin{equation} \label{eq:wzeta-polybound}
|\wzeta(\sigma+i t)| \le C |t|^M
\end{equation}
(where $i=\sqrt{-1}$) holds for all $\sigma\in I$ and $|t|\ge 1$.
\end{thm}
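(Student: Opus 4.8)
The plan is to establish all four parts by representing $\wzeta(s)$ as a Mellin–Barnes type integral and then shifting contours, which is the standard machinery for this kind of analytic continuation (cf.\ Matsumoto's work on the Mordell–Tornheim zeta function). First, for $\re(s)>2/3$ one writes
\[
\wzeta(s) = \sum_{j,k=1}^\infty \frac{1}{j^s k^s (j+k)^s}
= \sum_{j,k=1}^\infty \frac{1}{(jk)^s}\cdot\frac{1}{j^s}\left(1+\tfrac{k}{j}\right)^{-s},
\]
and applies the classical Mellin–Barnes formula
\[
(1+u)^{-s} = \frac{1}{2\pi i}\int\limits_{(c)} \frac{\Gamma(s+z)\Gamma(-z)}{\Gamma(s)}\, u^z\, dz
\qquad (-\re(s)<c<0,\ |\arg u|<\pi),
\]
with $u=k/j$. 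Interchanging sum and integral (justified by absolute convergence when $\re(s)$ is large enough and $c$ is chosen appropriately) collapses the double sum into products of Riemann zeta values, giving
\[
\wzeta(s) = \frac{1}{2\pi i}\int\limits_{(c)} \frac{\Gamma(s+z)\Gamma(-z)}{\Gamma(s)}\,\zeta(s-z)\,\zeta(2s+z)\,dz.
\]
Part \textbf{(1)} then follows by a direct convergence analysis of the original double series: comparison with $\sum j^{-s}k^{-s}(j+k)^{-s}$ shows absolute convergence exactly when $\re(s)>2/3$ (summing over the antidiagonals $j+k=n$ and noting $\sum_{j+k=n}(jk)^{-s}\asymp n^{1-2\re(s)}$ for $0<\re(s)<1$), while divergence on the line $\re(s)=2/3$ is seen by positivity of terms.

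Next, for part \textbf{(2)} and \textbf{(3)}, I would move the contour in the Mellin–Barnes integral to the left, picking up residues at the poles of the integrand. The factor $\zeta(2s+z)$ contributes a pole at $z=1-2s$; the factor $\Gamma(s+z)$ contributes poles at $z=-s-k$ for $k=0,1,2,\dots$; the factor $\zeta(s-z)$ is entire except for its pole at $z=s-1$, which lies to the right of the contour for $\re(s)>2/3$ and so is not crossed when moving left. Collecting the residue at $z=1-2s$ produces a term proportional to $\Gamma(1-s)\Gamma(2s-1)\zeta(3s-1)/\Gamma(s)$, which is holomorphic for $\re(s)>2/3$ except possibly at points where $\zeta(3s-1)$ has its pole, namely $3s-1=1$, i.e.\ $s=2/3$; computing the residue there and combining with the gamma factors yields the claimed residue $\frac{1}{2\pi\sqrt3}\Gamma(1/3)^3$ (using $\Gamma(1/3)\Gamma(2/3)=2\pi/\sqrt3$). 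The residues at $z=-s-k$ give terms $\frac{(-1)^k}{k!}\frac{\Gamma(s-k)}{\Gamma(s)}\,\zeta(s+k)\,\zeta(s-k)\cdots$—wait, more precisely $\binom{?}{}$—in any case each such term is meromorphic in $s$, and after shifting the contour arbitrarily far left one sees $\wzeta(s)$ extends meromorphically to all of $\C$, with the only surviving singularities coming from the residue sum. A careful bookkeeping of which gamma/zeta poles in $s$ survive cancellation should reproduce exactly the simple poles at $s=\tfrac12-k$ with the stated residue $(-1)^k 2^{-4k}\binom{2k}{k}\zeta\!\left(\tfrac{1-6k}{2}\right)$; the binomial $2^{-4k}\binom{2k}{k}$ is the tell-tale sign of a $\Gamma(s+z)$-versus-$\Gamma(2s+z)$ duplication-formula interaction, so I expect the residue at $s=\tfrac12-k$ to arise from the term with $z=-s-k$ evaluated via the Legendre duplication formula.

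For part \textbf{(4)}, the polynomial growth in vertical strips, I would use the Mellin–Barnes representation together with Stirling's formula. On the shifted contour, the integrand decays exponentially in $\im(z)$ like $e^{-\pi|\im z|}$ from the $\Gamma(s+z)\Gamma(-z)$ factors, while the two Riemann zeta factors grow only polynomially in $|\im z|$ and in $|t|=|\im s|$ in any vertical strip (by the standard convexity bound for $\zeta$). Hence the integral is bounded by $C|t|^M$ uniformly for $\sigma$ in a compact interval, once the contour has been pushed far enough left to clear all poles in that strip; the finitely many residue terms picked up along the way are products of $\Gamma$'s and $\zeta$'s, each again of polynomial growth in vertical strips by Stirling and convexity. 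The main obstacle, I expect, is not any single estimate but the bookkeeping in parts (2)–(3): one must track the interplay between the poles of $\Gamma(s+z)$, $\zeta(s-z)$, and $\zeta(2s+z)$ as the contour is shifted, verify that no spurious poles in $s$ are introduced (in particular that the would-be poles of the individual residue terms at negative integers and half-integers either cancel against each other or are exactly the claimed ones), and confirm the precise residue constants—especially checking that there is genuinely \emph{no} pole at, say, $s=0$ or the negative integers, which requires the $\zeta$-factors' trivial zeros to cancel the $\Gamma$-factors' poles.
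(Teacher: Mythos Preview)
Your approach is essentially the paper's: derive the Mellin--Barnes representation
\[
\Gamma(s)\wzeta(s)=\frac{1}{2\pi i}\int_{(c)}\Gamma(s+z)\Gamma(-z)\zeta(2s+z)\zeta(s-z)\,dz,
\]
then shift the contour, collect residues, and bound everything with Stirling plus the convexity bound for $\zeta$. The one substantive difference is the direction of the shift. You propose to move the contour to the \emph{left}, crossing the pole of $\zeta(2s+z)$ at $z=1-2s$ and the poles of $\Gamma(s+z)$ at $z=-s-k$. The paper instead shifts to the \emph{right}, to $\re z = M-\tfrac12$, crossing the pole of $\zeta(s-z)$ at $z=s-1$ and the poles of $\Gamma(-z)$ at $z=0,1,\dots,M-1$. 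Because the integrand is invariant under $z\mapsto -s-z$ (it swaps the two gamma factors and the two zeta factors), both shifts produce exactly the same residue terms
\[
\Gamma(2s-1)\Gamma(1-s)\zeta(3s-1)+\sum_{k=0}^{M-1}\frac{(-1)^k}{k!}\Gamma(s+k)\zeta(2s+k)\zeta(s-k),
\]
so your sketched residue at $z=-s-k$ (which you left half-computed) would in fact come out identical to the paper's residue at $z=k$.

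The practical advantage of the paper's rightward shift is that the remaining integral is then holomorphic on the wide strip $\tfrac34-\tfrac{M}{2}<\re s<M+\tfrac12$, because \emph{all} the $s$-dependent poles ($z=s-1$, $z=1-2s$, $z=-s-k$) lie to the left of a far-right contour uniformly in $s$ over that strip. With your leftward shift to a fixed line $\re z=-N$, the poles $z=-s-k$ move to the right as $\re s$ decreases and will re-cross the contour one by one at $\re s = N-k$; so the integral along a fixed left contour is only holomorphic on strips of width $1$ in $\sigma$, and you would have to patch infinitely many such strips together. This can be made to work, but the paper's choice is cleaner.

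For part~(1) your antidiagonal argument is a valid alternative to the paper's split into $j>k$, $j<k$, $j=k$. For part~(3), your instinct that the residues at $s=\tfrac12-k$ arise via the duplication formula is correct: the paper obtains them as $\frac{\sqrt\pi}{4^k k!}\zeta(\tfrac{1-6k}{2})$ from the pole of $\Gamma(2s-1)$ in the first residue term combined with the pole of $\zeta(2s+k)$ in the sum, and then divides by $\Gamma(\tfrac12-k)$ to get your stated form $(-1)^k 2^{-4k}\binom{2k}{k}\zeta(\tfrac{1-6k}{2})$. Your worry about poles at the nonpositive integers is handled automatically: at $s=-m$ all terms in the representation have at most simple poles, so $\Gamma(s)\wzeta(s)$ has at most a simple pole there, and dividing by $\Gamma(s)$ (which itself has a simple pole) leaves $\wzeta(s)$ regular --- no cancellation miracle is needed for part~(3) itself. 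Part~(4) is as you say.
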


We note that Matsumoto already proved part (2) of Theorem~\ref{thm:analytic-continuation} (see Theorem~1 in his paper \cite{matsumoto}), but he was studying the more general function \eqref{eq:matsumoto-mordell-tornheim} of several complex variables and did not examine more closely the properties of $\wzeta(s)$.

At the end of his discussion of the Mordell-Tornheim zeta function, \nobreak{Matsumoto} writes: ``\textit{It is now an interesting problem to study the properties of the values of $\zeta_{MT}(s_1,s_2,s_3)$ at non-positive integers.}'' We solve this problem for the case of our function $\wzeta(s)=\zeta_{MT}(s,s,s)$. The result is as follows.

\begin{thm} 
\label{thm:wzeta-specialvalues}
$\wzeta(s)$ has the values
\begin{align}
\wzeta(0) &= \tfrac13, \label{eq:wzeta-at-zero} \\
\wzeta(-n) &= 0 \qquad (n=1,2,3,\ldots). \label{eq:wzeta-trivialzeros}
\end{align}
\end{thm}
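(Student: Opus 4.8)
The plan is to read off the special values of $\wzeta(s)$ at $s=0,-1,-2,\ldots$ directly from the analytic-continuation machinery already established in Theorem~\ref{thm:analytic-continuation}. The natural tool is a Mellin–Barnes / integral representation of $\wzeta(s)$ of the type used by Matsumoto, in which one writes (after separating off the diagonal-type contributions)
\[
\wzeta(s) = 2\sum_{j<k} \frac{1}{(jk(j+k))^s} + \sum_{j=1}^\infty \frac{1}{(2j^3)^s},
\]
and expands $(j+k)^{-s} = \frac{1}{\Gamma(s)}\frac{1}{2\pi i}\int_{(c)} \Gamma(s+z)\Gamma(-z)\, j^{z}k^{-s}\,(k/j)^{\cdots}\,dz$ to express $\wzeta(s)$ in terms of $\zeta(s-z)$, $\zeta(2s+z)$ and the like against a kernel carrying a factor $1/\Gamma(s)$. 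One then shifts the contour to pick up residues; the crucial structural feature is that every term in the resulting expansion is multiplied by $1/\Gamma(s)$, which has a zero at $s=0,-1,-2,\ldots$.

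Concretely, I would argue as follows. First, establish the integral representation and note that, after the contour shift performed in the proof of Theorem~\ref{thm:analytic-continuation}, $\wzeta(s)$ is expressed as $\frac{1}{\Gamma(s)}$ times a function that is holomorphic at each $s=-n$ ($n\ge1$) and has at worst a simple pole at $s=0$ coming from the $\zeta$-factors. Since $1/\Gamma(s)$ vanishes to first order at every nonpositive integer, the simple pole at $s=0$ is cancelled and one is left with a finite value $\wzeta(0)$, while at $s=-n$, $n\ge1$, the holomorphy of the cofactor forces $\wzeta(-n)=0$. Thus \eqref{eq:wzeta-trivialzeros} is essentially automatic once the $1/\Gamma(s)$-structure is in hand; these are the ``trivial zeros'' analogous to those of $\zeta(s)$, consistent with the fact (Theorem~\ref{thm:analytic-continuation}(3)) that the only poles of $\wzeta$ lie at $s=\tfrac23$ and $s=\tfrac12-k$, none of which is a nonpositive integer.

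The only genuine computation is then the value $\wzeta(0)=\tfrac13$. Here I would extract the constant term: $\wzeta(0) = \lim_{s\to 0} \frac{1}{\Gamma(s)}\cdot(\text{cofactor})$, and since $\frac{1}{\Gamma(s)} = s + O(s^2)$ near $s=0$ while the cofactor has a simple pole $\frac{c}{s} + O(1)$, we get $\wzeta(0) = c$, the residue of the cofactor at $s=0$. Tracking that residue through the integral representation reduces it to an elementary combination of values $\zeta(0)=-\tfrac12$, $\zeta(-1)=-\tfrac{1}{12}$ and $\Gamma$-values at integers; alternatively, one can obtain $\wzeta(0)$ as a limit of $\wzeta$-values, or by noting that the constant term of $\wzeta(s)$ near $s=0$ inherits a contribution of $\frac13$ from the $\sum_j (2j^3)^{-s}$ piece together with compensating contributions from the off-diagonal terms. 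I expect the main obstacle to be purely bookkeeping: setting up the Mellin–Barnes representation so that the $1/\Gamma(s)$ factor is manifest and carefully accounting for which residues are picked up in the relevant strip, after which both statements in the theorem fall out. There are no deep analytic difficulties beyond what Theorem~\ref{thm:analytic-continuation} already supplies.
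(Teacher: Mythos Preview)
Your overall setup is right: one does work with the representation
\[
\Gamma(s)\wzeta(s)=\Gamma(2s-1)\Gamma(1-s)\zeta(3s-1)+\sum_{k=0}^{M-1}\frac{(-1)^k}{k!}\Gamma(s+k)\zeta(2s+k)\zeta(s-k)+\text{(holomorphic integral)},
\]
and the value $\wzeta(-n)$ is indeed $(-1)^n n!$ times the residue of $\Gamma(s)\wzeta(s)$ at $s=-n$. Your computation of $\wzeta(0)$ is essentially the paper's: the residue at $s=0$ is $\rho_0=\zeta(0)^2-\zeta(-1)=\tfrac14+\tfrac1{12}=\tfrac13$.

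The gap is in your treatment of \eqref{eq:wzeta-trivialzeros}. You assert that the cofactor $\Gamma(s)\wzeta(s)$ is ``holomorphic at each $s=-n$ ($n\ge1$)'' so that the zero of $1/\Gamma(s)$ forces $\wzeta(-n)=0$, and you call this ``essentially automatic'' bookkeeping. It is not. The individual terms above \emph{do} have poles at the negative integers: $\Gamma(2s-1)$ contributes a pole at every $s=-m$, and each $\Gamma(s+k)$ contributes poles at $s=-k,-k-1,\ldots$. Summing their residues at $s=-m$ gives
\[
\rho_m=-\frac{m!}{(2m+1)!}\zeta(-3m-1)+\frac{(-1)^m}{m!}\sum_{j=0}^m\binom{m}{j}\zeta(-m-j)\zeta(-2m+j),
\]
and the statement $\rho_m=0$ for $m\ge1$ is \emph{not} a formal consequence of the $1/\Gamma(s)$ structure. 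For odd $m$ it happens to follow from the trivial zeros of $\zeta$, but for even $m=2p$ it is exactly the Agoh--Dilcher identity \eqref{eq:bernoulli-zeta-minus},
\[
\zeta(-6p-1)=(4p+1)\binom{4p}{2p}\sum_{j}\binom{2p}{j}\zeta(-2p-j)\zeta(-4p+j),
\]
which the paper proves separately (Sections~\ref{sec:bernoulli}--\ref{sec:eisenstein}) by an independent combinatorial argument. Invoking Theorem~\ref{thm:analytic-continuation}(3) here is circular, since that part is proved jointly with Theorem~\ref{thm:wzeta-specialvalues} using the same residue computation. So the ``trivial zeros'' of $\wzeta$ are, unlike those of $\zeta$, genuinely nontrivial: your plan needs an actual proof of $\rho_m=0$, and that is where the real content of the theorem lies.
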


Note that the equations \eqref{eq:wzeta-at-zero}--\eqref{eq:wzeta-trivialzeros} imply that Mordell's property that $\wzeta(2n)$ is a rational multiple of $\pi^{6n}$ holds true for \emph{all} integer $n$. Furthermore, somewhat surprisingly, the formula 
\eqref{eq:wzeta-evenvalues} 
of Subbarao and Sitaramachandrarao still holds true for $n=0$ if one accepts the convention that $\binom{-1}{-1}=1$, even though their proof clearly does not apply in that case. (Perhaps less strikingly, the formula also holds true for negative values of $n$, simply because the range of the summation becomes empty.) By comparison, 
Mordell's formula \eqref{eq:mordell-bernoullipoly} gives
the incorrect value of $-1/6$ in the case $n=0$.

Our final result on $\wzeta(s)$ is a formula expressing its derivative at $s=0$ in terms of familiar special constants and a curious definite integral. 

\begin{thm} 
\label{thm:wzeta-deriv0}
Let $\gamma$ denote 
the Euler-Mascheroni constant.
The value $\wzeta'(0)$ is given by
\begin{align} 
\wzeta'(0) &= \frac{1}{12}(1+\gamma) + \frac34 \log(2\pi) - 2 \zeta'(-1) + \frac12 \int_{-\infty}^\infty \frac{\zeta\left(\tfrac32+it\right)\zeta\left(-\tfrac32-it\right)}{\left(\tfrac32+it\right) \cosh(\pi t)}\,dt
\nonumber \\[3pt] &= 1.83787706640934548356\ldots.
\label{eq:wzeta-deriv0}
\end{align}
\end{thm}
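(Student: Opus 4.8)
The plan is to obtain a Mellin--Barnes integral representation of $\wzeta(s)$, move the contour to continue it to a neighborhood of $s=0$, and then differentiate term by term. Starting from $(j+k)^{-s}=j^{-s}(1+k/j)^{-s}$ and the Mellin--Barnes formula
\[
(1+w)^{-s}=\frac{1}{2\pi i\,\Gamma(s)}\int_{(c)}\Gamma(s+z)\Gamma(-z)\,w^{z}\,dz
\qquad(-\re(s)<c<0),
\]
summing over $j,k\ge1$ and interchanging sum and integral — legitimate when $\re(s)$ is large, since $\Gamma(s+z)\Gamma(-z)$ decays like $e^{-\pi|\im(z)|}$ along vertical lines while the zeta factors grow only polynomially (cf.\ Theorem~\ref{thm:analytic-continuation}(4)) — one gets
\[
\wzeta(s)=\frac{1}{\Gamma(s)}\cdot\frac{1}{2\pi i}\int_{(c)}\Gamma(s+z)\Gamma(-z)\,\zeta(2s+z)\,\zeta(s-z)\,dz ,
\]
valid for $\re(s)$ large, with $c$ chosen in a suitable interval to the left of the imaginary axis so that $\re(2s+z)>1$ and $\re(s-z)>1$ on the contour.

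I would then push the contour leftward to the line $\re(z)=-3/2$. For $\re(s)\in(2/3,1)$ the poles crossed are $z=1-2s$ (a pole of $\zeta(2s+z)$) and $z=-s$ (a pole of $\Gamma(s+z)$), contributing residues $\frac{\Gamma(1-s)\Gamma(2s-1)}{\Gamma(s)}\,\zeta(3s-1)$ and $\zeta(s)\zeta(2s)$; this also recovers the poles of $\wzeta$ at $s=\tfrac23$ and $s=\tfrac12$ asserted in Theorem~\ref{thm:analytic-continuation}. As $\re(s)$ decreases past $\tfrac12$, the pole $z=-s-1$ of $\Gamma(s+z)$ crosses $\re(z)=-3/2$, so the analytic continuation to a neighborhood of $s=0$ acquires its residue, giving
\begin{align*}
\wzeta(s)&=\zeta(s)\zeta(2s)+\frac{\Gamma(1-s)\Gamma(2s-1)}{\Gamma(s)}\,\zeta(3s-1)-\frac{\Gamma(s+1)}{\Gamma(s)}\,\zeta(s-1)\zeta(2s+1)\\
&\qquad+\frac{1}{\Gamma(s)}\cdot\frac{1}{2\pi i}\int_{(-3/2)}\Gamma(s+z)\Gamma(-z)\,\zeta(2s+z)\,\zeta(s-z)\,dz ,
\end{align*}
an identity between functions holomorphic near $s=0$. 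The one place calling for care is getting these residue terms and their signs exactly right; the cleanest check is that the right-hand side at $s=0$ must equal $\wzeta(0)=\tfrac13$ (Theorem~\ref{thm:wzeta-specialvalues}), which — once one writes $\Gamma(2s-1)/\Gamma(s)=\Gamma(2s+1)/\bigl(2(2s-1)\Gamma(s+1)\bigr)$ and uses $s\,\zeta(2s+1)\to\tfrac12$ to resolve the indeterminate forms — pins everything down. (Alternatively one can push the contour all the way to $\re(z)=-5/2$ while still inside $\re(s)>2/3$, obtaining a four-term identity manifestly meromorphic across $s=\tfrac12$, and then move back to $\re(z)=-3/2$.)

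It then remains to differentiate at $s=0$. The first three terms are elementary: using $\zeta(0)=-\tfrac12$, $\zeta'(0)=-\tfrac12\log(2\pi)$, $\zeta(-1)=-\tfrac1{12}$, $\psi(1)=-\gamma$, the Laurent expansion $\zeta(1+x)=x^{-1}+\gamma+O(x)$, and the above rewriting of $\Gamma(2s-1)/\Gamma(s)$, a short computation gives $\frac{d}{ds}\bigl[\zeta(s)\zeta(2s)\bigr]|_{s=0}=\tfrac34\log(2\pi)$, $\frac{d}{ds}\bigl[\tfrac{\Gamma(1-s)\Gamma(2s-1)}{\Gamma(s)}\zeta(3s-1)\bigr]|_{s=0}=\tfrac1{12}-\tfrac32\zeta'(-1)$, and $\frac{d}{ds}\bigl[-\tfrac{\Gamma(s+1)}{\Gamma(s)}\zeta(s-1)\zeta(2s+1)\bigr]|_{s=0}=\tfrac{\gamma}{12}-\tfrac12\zeta'(-1)$. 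For the last term, write it as $\Gamma(s)^{-1}J(s)$ with $J(s)=\frac{1}{2\pi i}\int_{(-3/2)}\Gamma(s+z)\Gamma(-z)\zeta(2s+z)\zeta(s-z)\,dz$ holomorphic near $0$; since $\Gamma(s)^{-1}=s+\gamma s^{2}+\cdots$, its derivative at $0$ is just $J(0)$. Parametrizing $z=-\tfrac32+it$ and using $\Gamma(-\tfrac32+it)\Gamma(\tfrac32-it)=\pi\bigl((\tfrac32-it)\cosh(\pi t)\bigr)^{-1}$ (which follows from $\Gamma(w)\Gamma(-w)=-\pi/(w\sin\pi w)$ and $\sin\bigl(\pi(\tfrac32-it)\bigr)=-\cosh(\pi t)$), then substituting $t\mapsto-t$, identifies $J(0)=\tfrac12\int_{-\infty}^{\infty}\frac{\zeta(\frac32+it)\,\zeta(-\frac32-it)}{(\frac32+it)\cosh(\pi t)}\,dt$. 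Adding the four contributions yields exactly \eqref{eq:wzeta-deriv0}, and the stated decimal follows by numerically evaluating the (exponentially convergent) integral.

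The main obstacle is thus not any single hard computation but the bookkeeping in the analytic continuation across the pole of $\wzeta$ at $s=\tfrac12$: one must correctly track which residue the contour line collects as a moving pole of the integrand crosses it, and with which sign. Everything else is routine — the convergence of every contour integral reduces to exponential decay of the $\Gamma$-factors against polynomial growth of $\zeta$ on vertical lines, and the term-by-term differentiation uses only standard identities for $\Gamma$, $\psi$, and $\zeta$ at $0$ and $-1$.
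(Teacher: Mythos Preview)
Your proof is correct, and the overall strategy---Mellin--Barnes representation, contour shift, then termwise differentiation at $s=0$---is the same as the paper's. The one genuine difference is the \emph{direction} of the contour shift: the paper moves the line of integration to the right, to $\re(z)=3/2$, picking up residues at $z=s-1$, $z=0$, and $z=1$; you move it to the left, to $\re(z)=-3/2$, picking up residues at $z=1-2s$, $z=-s$, and (after the extra continuation step past $\sigma=\tfrac12$) $z=-s-1$. These two sets of poles are exchanged by the symmetry $z\mapsto -s-z$ of the integrand $\Gamma(s+z)\Gamma(-z)\zeta(2s+z)\zeta(s-z)$, which is why the three elementary terms come out identical in both approaches; and since $\partial_s\Delta(s,z)|_{s=0}=-\pi\zeta(z)\zeta(-z)/(z\sin\pi z)$ is even in $z$, the integrals over $(3/2)$ and $(-3/2)$ coincide as well. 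The paper's rightward shift is slightly cleaner in that it lands directly in a strip $-\tfrac14<\re(s)<\tfrac52$ containing $s=0$, so no further pole-tracking in $s$ is needed---precisely the ``bookkeeping'' step you flag as your main obstacle. Your alternative of pushing all the way to $\re(z)=-5/2$ before retreating to $-3/2$ would indeed sidestep that issue, and is the leftward analogue of what the paper does on the right.
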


Note that the integral in \eqref{eq:wzeta-deriv0} converges very rapidly due to the exponential decay of the factor $1/\cosh(\pi t)$, and is easy to evaluate numerically to high precision.
Curiously, the numerical value of the integral (including the multiplicative factor $\tfrac12$) is $-0.002807659\ldots$, so its relative influence on $\wzeta'(0)$ is very small---less than a fifth of a percent.

\paragraph{Update added in revision.} Following publication of the preprint version of this paper, the following much simpler formula for $\omega'(0)$ was proved by Borwein and Dilcher \cite{borwein-dilcher}.

\medskip
\begin{thm} \label{thm:borwein-dilcher}
\quad $ \omega'(0) = \log(2\pi). $
\end{thm}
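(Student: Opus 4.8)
The plan is to reconcile the complicated-looking formula of Theorem~\ref{thm:wzeta-deriv0} with the clean closed form $\wzeta'(0)=\log(2\pi)$ by showing that the ``extra'' terms collapse. Concretely, subtracting $\log(2\pi)$ from the right-hand side of \eqref{eq:wzeta-deriv0}, it suffices to prove the identity
\begin{equation*}
\frac12 \int_{-\infty}^\infty \frac{\zeta\left(\tfrac32+it\right)\zeta\left(-\tfrac32-it\right)}{\left(\tfrac32+it\right)\cosh(\pi t)}\,dt
= -\frac{1}{12}(1+\gamma) + \frac14\log(2\pi) + 2\zeta'(-1).
\end{equation*}
The natural strategy is to recognize the left-hand integral as a contour integral: writing $s=\tfrac32+it$ turns it into $\frac{1}{2\pi i}\int_{(3/2)} \frac{\zeta(s)\zeta(3-2s)}{s}\cdot\frac{\pi}{\cos(\pi(s-\tfrac32)/\ldots)}\,ds$ up to bookkeeping with the $\cosh$ factor, which is (a shift of) $1/\cos$; so the kernel is meromorphic with known poles. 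I would then push the contour of integration—either to the right, where $\zeta(3-2s)$ eventually decays and one picks up nothing, or more profitably shift it leftward across the poles of $\zeta(3-2s)$ (at $s=1$), of $\zeta(s)$ (at $s=1$ again), of $1/s$ (at $s=0$), and of the hyperbolic kernel (at the half-integers). Each crossed pole contributes a residue, and by the functional equation and standard values $\zeta(0)=-\tfrac12$, $\zeta'(0)=-\tfrac12\log(2\pi)$, $\zeta(-1)=-\tfrac{1}{12}$, together with $\zeta(s)=\tfrac{1}{s-1}+\gamma+O(s-1)$, these residues should assemble into exactly the combination $-\tfrac{1}{12}(1+\gamma)+\tfrac14\log(2\pi)+2\zeta'(-1)+\log(2\pi)$.

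The key steps, in order, are: (i) rewrite the integral of \eqref{eq:wzeta-deriv0} as a single complex contour integral of the meromorphic kernel $\frac{\zeta(s)\zeta(3-2s)}{s\cdot(\text{trig factor})}$ along $\re(s)=\tfrac32$; (ii) establish the decay of $\zeta(s)\zeta(3-2s)/(s\cos(\cdot))$ along vertical lines and in horizontal strips (this follows from part (4) of Theorem~\ref{thm:analytic-continuation} applied to the ordinary $\zeta$, i.e.\ polynomial growth of $\zeta$ beaten by exponential decay of $1/\cosh$), justifying the contour shift; (iii) locate all poles of the kernel strictly between the old and new contours and compute their residues—paying careful attention to the point $s=1$, where $\zeta(s)$ has a simple pole but $\zeta(3-2s)=\zeta(1-2(s-1))$ is regular, so one needs the Laurent expansion only to order zero, and to $s=0$, where the $\tfrac1s$ meets $\zeta(0)\zeta(3)$; (iv) sum the residues and simplify using the special values of $\zeta$ and $\zeta'$ and the functional equation, arriving at $\log(2\pi)$.

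The main obstacle I anticipate is step (iii), the residue bookkeeping—not because any single residue is hard, but because getting the trigonometric/hyperbolic kernel's pole structure exactly right (which half-integers lie between the contours, with what signs, and whether their residues telescope or contribute a convergent tail sum) is error-prone, and one must be sure the shifted contour can be taken ``to infinity'' so that no residual integral survives. A secondary subtlety is the appearance of $\zeta'(-1)$: since $\zeta(3-2s)$ has its pole-derived $\gamma$ and since $\wzeta'(0)$ already carries a $-2\zeta'(-1)$ term, the $\zeta'(-1)$ contributions from the residue at $s=1$ (via $\frac{d}{ds}\zeta(3-2s)$ evaluated in the regular part, after differentiating the $1/s$ and trig factors) must cancel the explicit $-2\zeta'(-1)$ precisely; verifying that cancellation is the crux. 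If the direct contour shift proves delicate, an alternative is to differentiate the Mellin-transform representation of $\wzeta(s)$ used to prove Theorem~\ref{thm:wzeta-deriv0} with respect to $s$ at $s=0$ and integrate by parts, but I expect the contour-shift argument above to be the cleanest route.
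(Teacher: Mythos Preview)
The paper itself does not prove Theorem~\ref{thm:borwein-dilcher}; it is quoted as a result of Borwein and Dilcher \cite{borwein-dilcher}, proved after the preprint version appeared. So there is no ``paper's own proof'' to compare with, and your task is really to produce an independent argument.

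Your outline has a concrete slip and a more serious structural gap. The slip: with $s=\tfrac32+it$ the integrand of \eqref{eq:wzeta-deriv0} is $\zeta(s)\zeta(-s)$, not $\zeta(s)\zeta(3-2s)$; indeed the paper already records (see \eqref{eq:diff-ints0}) that the integral equals
\[
-\frac{1}{2i}\int_{(3/2)} \frac{\zeta(z)\zeta(-z)}{z\sin(\pi z)}\,dz,
\]
so the meromorphic kernel you should be analysing is $F(z)=\dfrac{\zeta(z)\zeta(-z)}{z\sin(\pi z)}$.

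The structural gap is that the contour shift you describe does not close. The function $F$ is even, so shifting the line $\re z=\tfrac32$ across $z=1,0,-1$ to the line $\re z=-\tfrac32$ picks up residues that cancel in pairs (and the residue at $0$ vanishes by parity); you recover the same integral and learn nothing. If instead you try to push the contour to $\pm\infty$, you meet simple poles at every odd integer $|n|\ge 3$, with residue $\pm \zeta(n)\zeta(-n)/n$; since $|\zeta(-n)|=|B_{n+1}|/(n+1)$ grows factorially, this series diverges and the integral over the far contour does not tend to zero. So neither of the two strategies you sketch (``shift to infinity so no residual integral survives'' or ``collect the residues between the contours'') can work as stated. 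Whatever Borwein and Dilcher do, it is not a bare residue count on this kernel; a workable route will need an additional identity (e.g.\ applying the functional equation to $\zeta(-z)$ first, or attacking $\omega'(0)$ from a different representation of $\omega(s)$) rather than more careful bookkeeping.
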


See also \cite{bailey-borwein} for related recent developments.

\medskip
Theorems~\ref{thm:analytic-continuation}--\ref{thm:borwein-dilcher} 
will all play a role in our asymptotic analysis leading up to the proof of Theorem~\ref{thm:su3-asym}. In particular, the poles at $s=2/3$ and $s=1/2$ and their residues, and the evaluation $\wzeta(0)=\tfrac13$, have a large influence on the form of the asymptotic formula \eqref{eq:su3-asym}. The evaluation of $\wzeta'(0)$ enters into the definition of the multiplicative constant $K$, which ends up emerging from our calculations in the form
\begin{equation} \label{eq:mult-const-altdef}
K = \frac{\sqrt{3}}{\sqrt{5\pi}} X^{1/3} \exp\left(-\frac{1}{2560} X^{-10} Y^4+ \wzeta'(0)\right),
\end{equation}
(clearly equivalent to \eqref{eq:multiplicativeconst-def} in view of Theorem~\ref{thm:borwein-dilcher}). The poles with negative real part and the location of the zeros \eqref{eq:wzeta-trivialzeros} do not enter into our present analysis of the asymptotic behavior of $r(n)$, but those results, aside from being interesting in their own right, will also start having an effect on the asymptotics of $r(n)$ if one should attempt to derive more detailed expansions for $r(n)$ which include additional lower order terms currently encapsulated in the $(1+o(1))$ term in \eqref{eq:su3-asym}; see the discussion of open problems in Section~\ref{sec:final-comments}.

\subsection{The Bernoulli numbers and Eisenstein series}

\label{subsec:bernoulli}

One of our surprising discoveries was that Theorem~\ref{thm:wzeta-specialvalues} has an intriguing connection to the Bernoulli numbers, and ultimately to the theory of modular forms. Recall that the Bernoulli numbers are defined by the power series expansion
$$ \frac{x}{e^x-1} = \sum_{n=0}^\infty \frac{B_n}{n!} x^n, $$
and that they are related to special values of the Riemann zeta function by the well-known equations
$\zeta(1-n)=(-1)^{n+1}\frac{B_{n}}{n}$, $\zeta(2n)=(-1)^{n+1}\frac{(2\pi)^{2n}}{2(2n)!} B_{2n}$, $(n\ge1)$. 

It is well-known that the Bernoulli numbers satisfy a host of remarkable summation identities (see \cite{arakawa-etal}, \cite{zagier-bernoulli}, and \cite[pp.~81--85]{berndt} for many examples). Our analysis of the properties of $\wzeta(s)$ led us to discover one such identity, namely the statement that for $n\ge1$ we have
\begin{equation} \label{eq:bernoulli}
\frac{B_{6n+2}}{6n+2} = -\frac{(4n+1)!}{(2n)!^2} \sum_{k=1}^n \binom{2n}{2k-1}
\frac{B_{2n+2k}}{2n+2k} \cdot \frac{B_{4n-2k+2}}{4n-2k+2}.
\end{equation}
Equivalently, in terms of the values at integer arguments of the zeta function, \eqref{eq:bernoulli} can be rewritten in either of the forms
\begin{align} 
\zeta(6n+2) &= \frac{2}{6n+1}\cdot\frac{(4n+1)!}{(2n)!^2} \sum_{k=1}^n \frac{\binom{2n}{2k-1}}{\binom{6n}{2n+2k-1}} \zeta(2n+2k)\zeta(4n-2k+2),
\label{eq:bernoulli-zeta-plus}
\\
\zeta(-6n-1) &= \frac{(4n+1)!}{(2n)!^2} \sum_{k=1}^n\binom{2n}{2k-1}\zeta(-2n-2k+1)\zeta(-2n+2k-1).
\label{eq:bernoulli-zeta-minus}
\end{align}
The identity \eqref{eq:bernoulli} turns out to have been discovered and proved earlier, being due to Agoh and Dilcher in their 2008 paper \cite{agoh-dilcher2008}. It belongs to a class of several summation identities that Agoh and Dilcher refer to as \demph{lacunary recurrences}, since they have the appealing property of expressing a given Bernoulli number in terms of a relatively small number of Bernoulli numbers of lower index; in the case of \eqref{eq:bernoulli}, $B_{6n+2}$ is expressed in terms of $B_{2n+2},B_{2n+4},\ldots,B_{4n}$. See also \cite{agoh-dilcher2007} for further discussion of lacunary Bernoulli number recurrences.

Note that \eqref{eq:bernoulli} does \emph{not} hold for $n=0$, when the left-hand side is equal to $1/12$ and the right-hand side is equal to $0$.

Being at first unaware of Agoh and Dilcher's result, we independently proved \eqref{eq:bernoulli}. This was fortunate, since our method of proof actually implies a stronger result that does not follow from the methods of \cite{agoh-dilcher2007}, and gives hints of a deeper theory at play. To formulate our new result, 
recall that the Eisenstein series of index $2k$ is an analytic function $G_{2k}(z)$ defined for $z$ in the upper half-plane $\mathbb{H}=\{x+iy\,:\,y>0\}$ by
$$ G_{2k}(z) = \sum_{(m,n)\in\Z^2\setminus\{(0,0)\}} \frac{1}{(mz+n)^{2k}} \qquad (k\ge2). $$
The function $G_{2k}(z)$ is related to the zeta value $\zeta(2k)$ (hence also to the Bernoulli number $B_{2k}$) by the well-known relation
$$ \lim_{\im(z)\to\infty} G_{2k}(z) = 2\zeta(2k), $$
usually formulated as the statement that $2\zeta(2k)$ is the constant term in the Fourier series $G_{2k}(z)=2\zeta(2k)+\sum_{n=1}^\infty c_{2k}(n)e^{2\pi i nz}$ (see \cite{apostol} for these facts and additional background on modular forms mentioned below). We are now ready to state our result.

\begin{thm} 
\label{thm:eisenstein}
The identity \eqref{eq:bernoulli-zeta-plus} is the ``shadow'' of a similar summation identity for the Eisenstein series. That is, for $n\ge1$ we have
\begin{equation} \label{eq:eisenstein}
G_{6n+2}(z) = \frac{1}{6n+1}\cdot\frac{(4n+1)!}{(2n)!^2} \sum_{k=1}^n \frac{\binom{2n}{2k-1}}{\binom{6n}{2n+2k-1}} G_{2n+2k}(z) G_{4n-2k+2}(z).
\end{equation}
\end{thm}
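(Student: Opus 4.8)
The plan is to treat \eqref{eq:eisenstein} as an identity between holomorphic modular forms on the full modular group and to reduce it, via the valence formula, to finitely many Fourier-coefficient identities of which the first is precisely \eqref{eq:bernoulli-zeta-plus}. First one checks that both sides have the same weight: every index occurring, namely $6n+2$ and the pairs $2n+2k$ and $4n-2k+2$ for $1\le k\le n$, is an even integer at least $4$ (here one uses $n\ge1$), so each $G_{2j}$ that appears is a classical holomorphic Eisenstein series of weight $2j$; since modular forms form a graded ring, the left side $G_{6n+2}$ and the right side (a fixed linear combination of the products $G_{2n+2k}(z)G_{4n-2k+2}(z)$, each of weight $(2n+2k)+(4n-2k+2)=6n+2$) are both modular forms of weight $w:=6n+2$ for $SL_2(\Z)$. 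Hence the difference $F$ of the two sides is a modular form of weight $w$, and by the valence formula a nonzero modular form of weight $w$ vanishes at the cusp $i\infty$ to order at most $w/12$. Therefore it suffices to show that the coefficients $c(0),c(1),\dots,c(\lfloor w/12\rfloor)$ in the $q$-expansion $F(z)=\sum_{m\ge0}c(m)q^m$ (with $q=e^{2\pi i z}$) all vanish.

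The constant term is already available. Since $G_{2j}(z)\to 2\zeta(2j)$ as $\im(z)\to\infty$, the constant term of the right side of \eqref{eq:eisenstein} equals
\begin{equation*}
\frac{4}{6n+1}\cdot\frac{(4n+1)!}{(2n)!^2}\sum_{k=1}^n\frac{\binom{2n}{2k-1}}{\binom{6n}{2n+2k-1}}\,\zeta(2n+2k)\,\zeta(4n-2k+2),
\end{equation*}
which by \eqref{eq:bernoulli-zeta-plus} equals $2\zeta(6n+2)$, the constant term of $G_{6n+2}$; thus $c(0)=0$, and in particular the theorem already holds when $n=1$, where $w=8$ and $\lfloor w/12\rfloor=0$. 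For the remaining coefficients I would substitute the Lipschitz expansion
\begin{equation*}
G_{2j}(z)=2\zeta(2j)+\frac{2(-2\pi i)^{2j}}{(2j-1)!}\sum_{m\ge1}\sigma_{2j-1}(m)\,q^m
\end{equation*}
into both sides of \eqref{eq:eisenstein}, expand the products, and compare coefficients of $q^m$. Each resulting equation $c(m)=0$ for $1\le m\le\lfloor w/12\rfloor$ is a convolution-sum identity relating $\sigma_{6n+1}(m)$, the values $\zeta(2n+2k)$ and $\zeta(4n-2k+2)$, and the Dirichlet convolutions $\sum_{r=1}^{m-1}\sigma_{2n+2k-1}(r)\,\sigma_{4n-2k+1}(m-r)$; after clearing the transcendental factors with $\zeta(2j)=(-1)^{j+1}(2\pi)^{2j}B_{2j}/(2(2j)!)$ these turn into purely rational (Bernoulli-number) identities of the same lacunary flavour as \eqref{eq:bernoulli}. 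I would establish them either by elementary rearrangement of double series --- the same bookkeeping that, applied to the partial-fraction decomposition of $1/(j^ak^b(j+k)^c)$, underlies the Mordell--Tornheim reductions and the Agoh--Dilcher recurrence --- or, more in keeping with the methods of this paper, by deducing them from the analytic properties of $\omega(s)$ in Theorem~\ref{thm:analytic-continuation}, just as \eqref{eq:bernoulli} itself (equivalently \eqref{eq:bernoulli-zeta-minus}) was.

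I expect the main obstacle to be precisely the uniform treatment of these higher Fourier coefficients: once $n\ge2$ the constant term no longer suffices, and proving the family $\{c(m)=0\}_{1\le m\le\lfloor w/12\rfloor}$ one coefficient at a time by ad hoc Bernoulli-number manipulations depending on both $m$ and $n$ would be unilluminating. The conceptually satisfying route --- presumably what the word ``shadow'' in the statement is pointing at --- is to lift the argument that produced \eqref{eq:bernoulli} to a two-variable setting: replace the series \eqref{eq:dirichlet-series} by a Mordell--Tornheim-type sum of total weight $6n+2$ whose two summation variables range over the lattice $\Z z+\Z$ rather than over $\N$. Such a sum is automatically a modular form of weight $6n+2$ (the $SL_2(\Z)$-action permutes bases of $\Z z+\Z$), it degenerates to the scalar identity \eqref{eq:bernoulli} on restricting the variables back to $\N$ (equivalently, on passing to its constant term), and running the same partial-fraction computation decomposes it into a combination of products $G_aG_b$ of Eisenstein series; comparing the two evaluations should yield \eqref{eq:eisenstein} in one stroke, with all the coefficient identities above falling out simultaneously. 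The price is that a rank-two lattice sum of this shape converges only in a restricted range of exponents, so the manipulation must be set up through analytic continuation or a suitable regularization before it is rigorous --- which is exactly where the complex-analytic control over $\omega(s)$ provided by Theorems~\ref{thm:analytic-continuation}--\ref{thm:wzeta-deriv0} (and its lattice analogue) is needed.
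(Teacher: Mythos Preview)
Your primary strategy---reduce to checking finitely many Fourier coefficients via the valence formula---is a legitimate line of attack and indeed essentially the one later taken by Mertens and Rolen (see the footnote in Section~\ref{sec:final-comments}). But as you yourself concede, it leaves a genuine gap: after $c(0)=0$ you have no concrete mechanism for verifying $c(1),\dots,c(\lfloor w/12\rfloor)=0$ uniformly in $n$. Your suggestions (``elementary rearrangement'', or ``deducing them from the analytic properties of $\omega(s)$'') are not arguments, and the convolution-sum identities you would need do not obviously follow from anything proved in the paper. So as a proof this is incomplete.

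The paper's proof is different and much shorter, and it is essentially the idea you gesture at in your final paragraph---but without the convergence obstruction you worry about. The point is that the argument of Section~\ref{sec:bernoulli} never used that $p,q$ were integers: it rested on the purely algebraic identity $g_n(p,q)=f_n(p,q)-f_n(p+q,q)-f_n(p,p+q)$ with $f_n,g_n$ homogeneous of degree $-(6n+2)$ in $p,q$, together with a telescoping step. One simply reruns that telescoping with $p,q$ ranging over the half-lattice $\Lambda_+(z)=\{mz+n:m>0\text{ or }(m=0,n>0)\}$ ordered so that $r\succ s\iff r-s\in\Lambda_+(z)$. The three sums collapse to $\sum_{r\in\Lambda_+(z)}f_n(r,r)=\tfrac12 G_{6n+2}(z)$ on one side and $\sum_k\beta_{n,k}\tfrac14 G_{2n+2k}(z)G_{4n-2k+2}(z)$ on the other, which is exactly \eqref{eq:eisenstein}. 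Every exponent that appears is at least $4$ (since $n\ge1$), so all lattice sums converge absolutely; there is no need for analytic continuation, regularization, or any input from Theorems~\ref{thm:analytic-continuation}--\ref{thm:wzeta-deriv0}. Your instinct that the ``shadow'' language points to a lattice lift was correct; the mistake was to expect trouble where there is none.
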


Let us now explain the connection between our results on $\wzeta(s)$ and the identity \eqref{eq:bernoulli}. Remarkably, we will show that the relation \eqref{eq:wzeta-trivialzeros} identifying the ``trivial zeros'' $s=-1, -2, \ldots$ of $\wzeta(s)$ is equivalent to that identity. It was precisely our study of the values $\wzeta(-n)$ which prompted the discovery of \eqref{eq:bernoulli} as a numerical observation for small values of $n$. We were then able to prove the identity (and the stronger Theorem~\ref{thm:eisenstein}) using separate algebraic methods and deduce \eqref{eq:wzeta-trivialzeros}, and also learned about the earlier work of Agoh and Dilcher. The same reasoning could work in reverse to deduce \eqref{eq:bernoulli} in yet another way if a more direct proof of \eqref{eq:wzeta-trivialzeros} could be found (e.g., based on a functional equation satisfied by $\wzeta(s)$), but we do not currently know of such a proof.

To conclude this discussion, let us comment on the connection to the theory of modular forms.
For $k\ge 0$, let $M_{2k}$ denote the vector space of modular forms of weight $2k$ for the modular group $SL(2,\Z)$. The space $M_{2k}$ is known to be of dimension $\lfloor k/6\rfloor$ if $k\equiv 1\ (\textrm{mod}\ 6)$, or $\lfloor k/6\rfloor+1$ otherwise. The Eisenstein series $G_{6n+2}$, and the products $G_{2n+2k}(z) G_{4n-2k+2}(z)$ appearing on the right-hand side of \eqref{eq:eisenstein}, are all elements of $M_{6n+2}$, and, curiously, the number of distinct terms on the right-hand side of \eqref{eq:eisenstein} (after grouping together the equal pairs 
$G_{2n+2k}(z) G_{4n-2k+2}(z)$, $G_{4n-2k+2}(z) G_{2n+2k}(z)$ obtained from symmetric indices $k$ and $n+1-k$) is precisely 
equal to $\dim M_{6n+2}$; check this separately for both even and odd values of~$n$.
Thus, the theory of modular forms seems like the proper context in which to consider \eqref{eq:eisenstein} and similar summation identities. It is natural to wonder whether there is a deeper connection between \eqref{eq:eisenstein} and the function $\wzeta(s)$; whether other connections of this type between modular forms and zeta functions can be found; and whether summation identities such as \eqref{eq:eisenstein} can be classified and better understood. See also the list of open problems in Section~\ref{sec:final-comments}.

\subsection{Methods, organization, and the companion \texttt{Mathematica} package}

The remainder of the paper is organized in roughly three parts: the first part consists of Section~\ref{sec:bernoulli} and \ref{sec:eisenstein}, in which we give a new proof of the summation identity \eqref{eq:bernoulli}, and then show how a modification of our argument yields a proof of Theorem~\ref{thm:eisenstein}. Our method is an extension of an algebraic technique introduced by Zagier \cite{zagier}, that enables us to reduce the identities to a family of more conventional binomial summation identities. These identities are proved using Zeilberger's algorithm and the method of Wilf-Zeilberger pairs.

In the second part of the paper, comprising Sections~\ref{sec:wzeta} and \ref{sec:wzeta2}, we study the properties of the function $\wzeta(s)$, and prove Theorems~\ref{thm:analytic-continuation}, \ref{thm:wzeta-specialvalues}, and \ref{thm:wzeta-deriv0}. The analysis relies heavily on an integral representation for $\wzeta(s)$ derived by Matsumoto \cite{matsumoto}.

In the final part, which consists of Sections~\ref{sec:asym-genfuns}--\ref{sec:proof-su3-asym}, we apply the results concerning $\wzeta(s)$ to the study of the asymptotic behavior of the sequence $r(n)$. We combine Mellin transform methods with a saddle point analysis (framed in probabilistic language as a local central limit theorem), culminating in the proof of Theorem~\ref{thm:su3-asym}. A key number-theoretic difficulty we encounter in the course of the proof is that of proving effective bounds for the decay of a certain generating function away from the saddle point; we derive the necessary bounds by using the modular transformation properties of the Jacobi theta functions. We conclude with some final comments and open problems in Section~\ref{sec:final-comments}.

A few of the proofs in the paper involve tedious computations of a purely algebraic nature, which can be checked by hand without much difficulty but are most easily verified using a computer algebra package. We prepared a companion \texttt{Mathematica} package that is available to download online \cite{romik-mathematica} and demonstrates the correctness of these computations.

\subsection{Acknowledgements}

The author thanks Craig Tracy for pointing out the papers of Matsumoto \cite{matsumoto} and Zagier \cite{zagier} and the relevance of Matsumoto's results to asymptotic analysis; Greg Kuperberg for helpful discussions; and an anonymous referee for several helpful comments. This work was supported by the National Science Foundation under grant DMS-0955584.

\section{Bernoulli numbers}

\label{sec:bernoulli}

In this section we give a new proof of the summation identity \eqref{eq:bernoulli} using a completely different method than that used by Agoh and Dilcher \cite{agoh-dilcher2008}. In the next section we show how a modification of our argument actually proves the stronger identity \eqref{eq:eisenstein} involving the Eisenstein series, which contains \eqref{eq:bernoulli} as a specialization at $z=i\infty$.

Note that we gave three equivalent formulations \eqref{eq:bernoulli}--\eqref{eq:bernoulli-zeta-minus} of the summation identity. While \eqref{eq:bernoulli} seems like the most natural number-theoretic formulation of the identity (and was the version proved by Agoh and Dilcher), the other two formulations will also play important roles, in that our proof technique will actually imply the version \eqref{eq:bernoulli-zeta-plus}, whereas we will later make use of its ``dual'' version \eqref{eq:bernoulli-zeta-minus} (the two being related via the functional equation for the Riemann zeta function) in our analysis of the function $\wzeta(s)$.

Our proof is based on adapting a technique introduced by Zagier \cite{zagier} as a way of proving the more elementary recurrence relation
$$
\zeta(2n) = \frac{2}{2n+1}\sum_{k=1}^{n-1} \zeta(2k)\zeta(2n-2k) \qquad (n\ge 2),
$$
a well-known relation due to Euler. (If one assumes the connection between the even integer zeta values to the Bernoulli numbers, this is the recurrence $B_{2n}=-\frac{1}{2n+1}\sum_{k=1}^{n-1} \binom{2n}{2k} B_{2k}B_{2n-2k}$, which has a simple proof using generating functions; see \cite[Proposition 1.15]{arakawa-etal}.) Before we proceed with the proof in the most general case, let us first illustrate the use of Zagier's technique to prove the case $n=1$ of \eqref{eq:bernoulli-zeta-plus}, which in that case becomes
$$ \zeta(8) = \tfrac{6}{7} \zeta(4)^2. $$
Define the function $f(p,q) = \frac{2}{7p^3 q^5} + \frac{3}{7p^4 q^4}+\frac{2}{7 p^5 q^3}$. One may easily verify that $f(p,q)$ satisfies the algebraic identity
$$ f(p,q) - f(p+q,q) - f(p,p+q) = \frac{6}{7 p^4 q^4}. $$
Now sum both sides of this equation over all $p,q\in\N$. The summation of the right-hand side gives $6 \zeta(4)^2/7$, and for the left-hand side we get
\begin{align*}
\sum_{p,q=1}^\infty & f(p,q) - \sum_{p,q=1}^\infty f(p+q,q) - \sum_{p,q=1}^\infty f(p,p+q) 
\\ &= \left(
\sum_{r,s=1}^\infty - \sum_{r>s\ge1} - \sum_{s>r\ge 1} 
\right)f(r,s) = \sum_{r=1}^\infty f(r,r) \\ &= \left(\frac27+\frac37+\frac27\right)\sum_{r=1}^\infty \frac{1}{r^8} = \zeta(8).
\end{align*}
For general $n$, we now try to generalize the idea by looking for a pair of functions $f_n(p,q)$, $g_n(p,q) = f_n(p,q)-f_n(p+q,q)-f_n(p,p+q)$, such that $f_n(p,q)$ is of the form
\begin{equation} \label{eq:ffun-def}
f_n(p,q) = \sum_{j=1}^{2n+1} \frac{\alpha_{n,j}}{p^{2n+j} q^{4n-j+2}},
\end{equation}
for some as-yet-undetermined coefficients $(\alpha_{n,j})_{j=1}^{2n+1}$, and such that $g_n(p,q)$ miraculously simplifies to be of the form
\begin{equation} \label{eq:gfun-simplifies}
g_n(p,q) = \sum_{k=1}^n \frac{\beta_{n,k}}{p^{2n+2k} q^{4n-2k+2}},
\end{equation}
for some coefficients $(\beta_{n,k})_{k=1}^n$. Assuming that this somewhat daring ansatz turns out to be successful, summing $g_n(p,q)$ over all $p,q\in\N$ then yields on the one hand
\begin{equation} \label{eq:sum-gofpq1}
\sum_{k=1}^n \beta_{n,k} \left(\sum_{p,q=1}^\infty p^{-2n-2k}q^{-4n+2k-2} \right) = \sum_{k=1}^n \beta_{n,k} \zeta(2n+2k) \zeta(4n-2k+2),
\end{equation}
and on the other hand
\begin{align}
& \sum_{p,q=1}^\infty f_n(p,q) - \sum_{p,q=1}^\infty f_n(p+q,q) - \sum_{p,q=1}^\infty f_n(p,p+q) 
\nonumber \\ &= \left(
\sum_{r,s=1}^\infty - \sum_{r>s\ge1} - \sum_{r>s\ge 1} 
\right) f_n(r,s)
= \sum_{r=1}^\infty f_n(r,r) \nonumber \\ &= \left(\sum_{j=1}^{2n+1} \alpha_{n,j} \right)\zeta(6n+2),
\label{eq:sum-gofpq2}
\end{align}
whereupon we obtain, after also adding a normalization condition $\sum_j \alpha_{n,j}=1$, the identity
\begin{equation} \label{eq:identity-betank}
\zeta(6n+2) 
= \sum_{k=1}^n \beta_{n,k} \zeta(2n+2k) \zeta(4n-2k+2).
\end{equation}

The difference with Zagier's original application of the technique is that in his case the undetermined coefficients were trivial (see the bottom of p.~498 of his paper \cite{zagier}), whereas here they are not. As the next lemma shows, finding them reduces to the solution of a somewhat tricky system of linear equations.

\begin{lem}
\label{lem:system-of-equations}
If $f_n(p,q)$ is defined by \eqref{eq:ffun-def}, then the function $g_n(p,q) = f_n(p,q)-f_n(p+q,q)-f_n(p,p+q)$ can be expressed in the form \eqref{eq:gfun-simplifies} if and only if the coefficients $(\alpha_{n,j})_{j=1}^{2n+1}, (\beta_{n,k})_{k=1}^n$ satisfy the equations
\begin{align}
&\sum_{j=1}^{2n+1} \alpha_{n,j}\Bigg( \binom{6n}{m-4n+j} -\binom{2n+j-2}{m-4n+j}-\binom{4n-j}{m-6n} \Bigg) \nonumber \qquad \qquad \qquad 
\\ & \hspace{160pt}= \sum_{k=1}^n \beta_{n,k} \binom{6n}{m-4n+2k}
\label{eq:lineqs}
\end{align}
for $m=2n,2n+1,\ldots,10n-2$.
\end{lem}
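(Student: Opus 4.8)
The plan is to convert the claimed equivalence into a routine comparison of coefficients between two Laurent polynomials, by clearing every denominator at once with a single power of $p+q$. To begin, note that each of $f_n(p,q)$, $f_n(p+q,q)$, $f_n(p,p+q)$ is a rational function whose only poles lie along $p=0$, $q=0$, or $p+q=0$, so the statement that $g_n(p,q)$ can be written in the form \eqref{eq:gfun-simplifies} is exactly the statement that the rational-function identity
\[
g_n(p,q) = \sum_{k=1}^n \frac{\beta_{n,k}}{p^{2n+2k}\,q^{4n-2k+2}}
\]
holds, with the same coefficients $\beta_{n,k}$ as in \eqref{eq:lineqs}. Multiplying through by the nonzero polynomial $(p+q)^{6n}$ is a reversible operation, and it turns both sides into honest Laurent polynomials in $p,q$ (homogeneous of degree $-2$): on the right this is clear, and on the left it follows from
\begin{align*}
(p+q)^{6n} f_n(p+q,q) &= \sum_{j=1}^{2n+1}\alpha_{n,j}\,\frac{(p+q)^{4n-j}}{q^{4n-j+2}}, \\
(p+q)^{6n} f_n(p,p+q) &= \sum_{j=1}^{2n+1}\alpha_{n,j}\,\frac{(p+q)^{2n+j-2}}{p^{2n+j}},
\end{align*}
together with the fact that the exponents $4n-j$ and $2n+j-2$ are nonnegative for $1\le j\le 2n+1$ (this is exactly where the choice of the exponent $6n$ matters), while $(p+q)^{6n}f_n(p,q)$ is already a Laurent polynomial.

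Two Laurent polynomials agree precisely when all of their coefficients agree, so the next step is to expand every term above by the binomial theorem --- expanding $(p+q)^{6n}$ in the contributions coming from $f_n(p,q)$ and from the right-hand side, $(p+q)^{4n-j}$ in the contribution from $f_n(p+q,q)$, and $(p+q)^{2n+j-2}$ in the contribution from $f_n(p,p+q)$ --- and to read off, for each integer $m$, the coefficient of the monomial $p^{m-6n}q^{6n-m-2}$ on each side. A short direct computation then gives that this coefficient equals
\[
\sum_{j=1}^{2n+1}\alpha_{n,j}\left(\binom{6n}{m-4n+j}-\binom{2n+j-2}{m-4n+j}-\binom{4n-j}{m-6n}\right)
\]
on the left and $\sum_{k=1}^n\beta_{n,k}\binom{6n}{m-4n+2k}$ on the right, with the usual convention that a binomial coefficient with a negative or too-large lower entry vanishes; this is exactly equation \eqref{eq:lineqs}.

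It remains only to identify the range of $m$ that actually carries information. The Laurent polynomial on the right is supported on monomials whose exponent of $p$ lies in $\{-4n,\ldots,4n-2\}$, i.e.\ on $m\in\{2n,\ldots,10n-2\}$, and the one on the left is a priori supported on the slightly larger set $m\in\{2n-1,\ldots,10n-1\}$; outside $\{2n-1,\ldots,10n-1\}$ both sides are identically zero, so there is no condition. For the two boundary values $m=2n-1$ and $m=10n-1$ one checks directly that only the $j=2n+1$ term, respectively the $j=1$ term, survives and that the two surviving binomial coefficients cancel, so those equations hold with no constraint on the $\alpha_{n,j}$ and $\beta_{n,k}$ and can be discarded. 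Consequently the Laurent-polynomial identity --- hence the rational-function identity, hence the representability of $g_n$ in the form \eqref{eq:gfun-simplifies} --- holds if and only if \eqref{eq:lineqs} holds for $m=2n,\ldots,10n-2$, which is precisely the assertion of the lemma. The coefficient bookkeeping in the middle step is routine but error-prone, and the one genuinely non-automatic point is the verification that the two boundary equations are vacuous, which is what justifies restricting to the stated range of $m$ rather than to all integers.
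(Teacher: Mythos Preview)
Your proof is correct and is essentially the same argument as the paper's. The only difference is cosmetic: the paper multiplies by $p^{6n}q^{6n}(p+q)^{6n}$ and then passes to the single variable $x=p/q$, whereas you multiply only by $(p+q)^{6n}$ and use homogeneity of the resulting Laurent polynomials to index the coefficient comparison by a single integer $m$; these two bookkeeping devices are equivalent, and the boundary checks at $m=2n-1$ and $m=10n-1$ are handled identically.
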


\begin{proof}
In \eqref{eq:gfun-simplifies}, substitute the definition of $g_n(p,q)$ and multiply both sides by $p^{6n} q^{6n} (p+q)^{6n}$. The identity becomes
\begin{align*}
\sum_{j=1}^{2n+1} \alpha_{n,j} \Bigg( & p^{4n-j} q^{2n+j-2}(p+q)^{6n}-p^{4n-j} q^{6n} (p+q)^{2n+j-2} 
\\ & \ \ - p^{6n} q^{2n+j-2} (p+q)^{4n-j} \Bigg)
= 
\sum_{k=1}^n \beta_{n,k} p^{4n-2k} q^{2n+2k-2} (p+q)^{6n},
\end{align*}
an equation in which both sides are homogeneous polynomials in $p,q$ of degree $12n-2$. Extracting a common factor of $q^{12n-2}$ and denoting $x=p/q$, the identity is clearly seen to be equivalent to the single-variable polynomial identity
\begin{align*}
&\sum_{j=1}^{2n+1} \alpha_{n,j} \Bigg( x^{4n-j}(1+x)^{6n}-x^{4n-j}(1+x)^{2n+j-2}-x^{6n}(1+x)^{4n-j}\Bigg) \qquad \qquad
\\ & 
\hspace{200pt} =\sum_{k=1}^n \beta_{n,k} x^{4n-2k} (1+x)^{6n}.
\end{align*}
Expanding both sides of this relation in powers of $x$ and equating coefficients of each monomial $x^m$ gives precisely the equations \eqref{eq:lineqs}. The power $m$ on the right-hand side ranges from $2n$ to $10n-2$; on the left-hand side it ranges from $2n-1$ to $10n-1$, but it is easy to verify that the coefficients of $x^{2n-1}$ and $x^{10n-1}$ vanish.
\end{proof}

Having derived the system of equations \eqref{eq:lineqs}, we used a computer to solve them for small values of $n$. This led us to guess based on numerical evidence that
\begin{align}
\alpha_{n,j} &= \frac{1}{6n+1} \cdot \frac{(4n+1)!}{(2n!)^2}
\frac{\binom{2n}{j-1}}{\binom{6n}{2n+j-1}},
\label{eq:coeffs-guess1} \\ 
\beta_{n,k} &= 
\frac{2}{6n+1} \cdot \frac{(4n+1)!}{(2n!)^2}
\frac{\binom{2n}{2k-1}}{\binom{6n}{2n+2k-1}} = 2\alpha_{n,2k}.
\label{eq:coeffs-guess2} 
\end{align}
In our situation we happened to already know what the coefficients $\beta_{n,k}$ should be, since we knew in advance the form of the identity \eqref{eq:identity-betank} to be proved---see Section~\ref{sec:wzeta} for the reasoning that led to its discovery. However, it is worth noting that the equations can be solved even with undetermined values for these coefficients, which can be useful for discovering additional identities; see Section~\ref{sec:final-comments} where we give two additional examples of identities discovered using this approach.

Having guessed the formulas \eqref{eq:coeffs-guess1}--\eqref{eq:coeffs-guess2},
our final task will be to verify that the guesses are correct. We start by checking the normalization condition.

\begin{lem} 
\label{lem:normalization}
If the numbers $\alpha_{n,j}$ are defined by \eqref{eq:coeffs-guess1}, then for all $n\ge1$ we have that $\sum_{j=1}^{2n+1} \alpha_{n,j}=1$.
\end{lem}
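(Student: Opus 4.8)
The plan is to reduce the claim to a single binomial sum and then evaluate that sum using the Beta-integral representation of a reciprocal binomial coefficient. First I would reindex the sum by setting $i=j-1$ and pulling the constant prefactor out, so that the assertion $\sum_{j=1}^{2n+1}\alpha_{n,j}=1$ becomes equivalent to the identity
\[
\sum_{i=0}^{2n} \frac{\binom{2n}{i}}{\binom{6n}{2n+i}} = \frac{(6n+1)\,(2n)!^2}{(4n+1)!}.
\]
So the whole lemma is a statement about this one finite sum.

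Next I would invoke the elementary identity $\binom{N}{k}^{-1} = (N+1)\int_0^1 t^k(1-t)^{N-k}\,dt = (N+1)B(k+1,N-k+1)$, applied with $N=6n$ and $k=2n+i$, to write $\binom{6n}{2n+i}^{-1} = (6n+1)\int_0^1 t^{2n+i}(1-t)^{4n-i}\,dt$. Since the sum over $i$ is finite I may freely interchange it with the integral, obtaining $(6n+1)\int_0^1 \sum_{i=0}^{2n}\binom{2n}{i}t^{2n+i}(1-t)^{4n-i}\,dt$. The key simplification is the factorization $t^{2n+i}(1-t)^{4n-i} = t^{2n}(1-t)^{2n}\cdot t^i(1-t)^{2n-i}$, after which the binomial theorem collapses the inner sum: $\sum_{i=0}^{2n}\binom{2n}{i}t^i(1-t)^{2n-i} = (t+(1-t))^{2n} = 1$. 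Hence the sum equals $(6n+1)\int_0^1 t^{2n}(1-t)^{2n}\,dt = (6n+1)B(2n+1,2n+1) = (6n+1)(2n)!^2/(4n+1)!$, which is exactly the right-hand side above; dividing by the prefactor $\frac{1}{6n+1}\cdot\frac{(4n+1)!}{(2n)!^2}$ gives $\sum_j\alpha_{n,j}=1$, as desired.

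There is essentially no serious obstacle here; the only ``trick'' is recognizing that the reciprocal binomial coefficient should be opened up as a Beta integral so that the $\binom{2n}{i}$ weights can be absorbed by the binomial theorem. For readers who prefer a purely mechanical verification, I would note in passing that the identity for $\sum_{i=0}^{2n}\binom{2n}{i}/\binom{6n}{2n+i}$ is also readily confirmed by creative telescoping (Zeilberger's algorithm), consistent with the computer-algebra checks collected in the companion \texttt{Mathematica} package. Either way, the remaining work in this section is the harder task of checking that the coefficients \eqref{eq:coeffs-guess1}--\eqref{eq:coeffs-guess2} actually satisfy the linear system \eqref{eq:lineqs}, which is taken up in the lemmas that follow.
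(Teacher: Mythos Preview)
Your argument is correct and is a genuinely different---and considerably more elementary---route than the one the paper takes. The paper does not evaluate the sum directly; instead it first reparametrizes by setting $F(n,k)=\alpha_{n/2,k+1}$ (so that the identity is stated for all integers $n\ge 0$, not just even ones), and then proves $S(n):=\sum_k F(n,k)\equiv 1$ by the Wilf--Zeilberger method: a rational certificate $R(n,k)$ found by Zeilberger's algorithm gives $G(n,k)=R(n,k)F(n,k)$ with $F(n+1,k)-F(n,k)=G(n,k+1)-G(n,k)$, whence telescoping yields $S(n+1)=S(n)$ and induction from $S(0)=1$ finishes.

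Your Beta-integral trick is the more conceptual proof here: it explains \emph{why} the identity holds (after unfolding $\binom{6n}{2n+i}^{-1}$ as an integral, the $\binom{2n}{i}$ weights collapse via the binomial theorem to a single Beta value), requires no computer algebra, and in fact proves the paper's half-integer strengthening just as well---your computation with $2n$ replaced by an arbitrary nonnegative integer $m$ goes through verbatim. The paper's WZ approach buys generality and mechanizability: it is the same hammer used in the subsequent lemma for the much harder system \eqref{eq:lineqs}, where no comparably slick closed-form trick is visible, so methodologically it keeps the section uniform. Your passing remark that the identity is ``also readily confirmed by creative telescoping'' is thus exactly what the paper in fact does as its primary argument.
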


\begin{proof}
Start by observing that the definition \eqref{eq:coeffs-guess1} of the $\alpha_{n,j}$'s as an interesting triangle of integers also makes sense when $n$ is a \emph{half-integer}. It turns out that the identity $\sum_j \alpha_{n,j}=1$ is still correct in that case, and this is both easier and more natural to prove. That is, we now wish to show that
\begin{equation} \label{eq:sumequals1}
S(n) := \sum_{k=0}^n F(n,k) = 1
\end{equation}
for all integer $n\ge 0$, where we define
$$
F(n,k) = \alpha_{n/2,k+1} = \frac{1}{3n+1}\frac{(2n+1)!}{(n!)^2}\frac{\binom{n}{k}}{\binom{3n}{n+k}}.
$$
This can be proved using the method of Wilf-Zeilberger pairs \cite{wilf-zeilberger}. Define the ``proof certificate'' function
$$ R(n,k) = -\frac{k (2n-k+1) (11n^2+2k^2-5nk+27n-6k+16)}{3(n+1)(3n+2)(3n+4)(n-k+1)},
$$
which was found with the help of the \texttt{Mathematica} package \texttt{fastZeil} \cite{paule-schorn}, \cite{paule-schorn2}, a software implementation of Zeilberger's algorithm \cite{zeilberger} (see also \cite{petkovsek-wilf-zeilberger}).
Let $G(n,k)=F(n,k)R(n,k)$. Then one may verify by direct computation that
\begin{equation} \label{eq:wzpair}
F(n+1,k)-F(n,k) = G(n,k+1)-G(n,k)
\end{equation}
(divide both sides by $F(n,k)$ to turn the relation into an equation between two rational functions, or refer to the companion package \cite{romik-mathematica} for an automated verification).
Summing both sides of \eqref{eq:wzpair} over all integer $k$ shows that $S(n+1)=S(n)$. Since $S(0)=1$, \eqref{eq:sumequals1} follows by induction.
\end{proof}

\begin{lem}
\label{lem:lineqs}
The coefficients $\alpha_{n,j}$ and $\beta_{n,k}$ as defined by \eqref{eq:coeffs-guess1}--\eqref{eq:coeffs-guess2} satisfy the system of equations \eqref{eq:lineqs}.
\end{lem}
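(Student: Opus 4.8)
The plan is to collapse the entire linear system \eqref{eq:lineqs} into a single polynomial identity for the generating polynomial of the $\alpha$'s, and then to prove that identity in the same style as the normalization identity in Lemma~\ref{lem:normalization}: by exhibiting the relevant sums as proper hypergeometric terms and applying Zeilberger's algorithm together with the Wilf--Zeilberger pair method.

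First I would substitute the explicit formula \eqref{eq:coeffs-guess1} into \eqref{eq:lineqs}. Setting $i=j-1$, the coefficient simplifies to the manifestly hypergeometric form
\[
\alpha_{n,j}=\frac{(4n+1)!}{(6n+1)!}\cdot\frac{(2n+i)!\,(4n-i)!}{(2n)!\,i!\,(2n-i)!},
\]
which is invariant under $i\mapsto 2n-i$; hence the polynomial $P_n(x):=\sum_{j=1}^{2n+1}\alpha_{n,j}x^{2n+1-j}$ is self-reciprocal, $x^{2n}P_n(1/x)=P_n(x)$, and satisfies $P_n(1)=1$ by Lemma~\ref{lem:normalization}. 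Since the right-hand side of \eqref{eq:lineqs} involves only the even-indexed coefficients, each weighted by $2$ (because $\beta_{n,k}=2\alpha_{n,2k}$), moving it over and combining with the first term $\sum_j\alpha_{n,j}\binom{6n}{m-4n+j}$ on the left---in which all $j$ occur---produces an alternating sum, so \eqref{eq:lineqs} becomes the family of binomial identities
\begin{multline*}
\sum_{j=1}^{2n+1}(-1)^{j-1}\alpha_{n,j}\binom{6n}{m-4n+j} \\
=\sum_{j=1}^{2n+1}\alpha_{n,j}\binom{2n+j-2}{m-4n+j}
+\sum_{j=1}^{2n+1}\alpha_{n,j}\binom{4n-j}{m-6n}
\end{multline*}
for $2n\le m\le 10n-2$. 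Recognizing the three sums as the coefficients of $x^m$ in $(1+x)^{6n}\sum_j(-1)^{j-1}\alpha_{n,j}x^{4n-j}$, in $\sum_j\alpha_{n,j}x^{4n-j}(1+x)^{2n+j-2}$, and in $x^{6n}\sum_j\alpha_{n,j}(1+x)^{4n-j}$ respectively, this whole family is equivalent to the single functional equation
\[
(1+x)^{4n+1}P_n(-x)=(1+x)^{2n}P_n\!\left(\frac{x}{1+x}\right)+x^{4n+1}P_n(1+x),
\]
which I would take as the statement to prove; it is easy to verify directly at $n=1$, where $P_1(x)=\tfrac17\left(2x^2+3x+2\right)$.

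To finish, I would feed the three one-parameter sums above---or, equivalently, expand $P_n\!\left(\tfrac{x}{1+x}\right)$ and $P_n(1+x)$ against \eqref{eq:coeffs-guess1}---into Zeilberger's algorithm to obtain recurrences in $n$ (with $m$ a free parameter) together with creative-telescoping certificates, exactly in the manner of \eqref{eq:wzpair}; alternatively one constructs a single Wilf--Zeilberger pair certifying the combined identity, and the companion \texttt{Mathematica} package \cite{romik-mathematica} can then verify the resulting rational-function identities mechanically.

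The step I expect to be the real obstacle is not the telescoping but the bookkeeping forced by the finite ranges of summation. The three binomial coefficients $\binom{6n}{m-4n+j}$, $\binom{2n+j-2}{m-4n+j}$, and $\binom{4n-j}{m-6n}$ have genuinely different supports in $j$ and are each nonzero only on a different sub-range of $m$ (for instance the last vanishes unless $m\ge 6n$, while the middle one is cut off from the other side), so the clean route is to split $2n\le m\le 10n-2$ into a handful of regimes, handle each with its own certificate, check that the boundary terms arising when the telescoped relation is summed over $j$ vanish at the endpoints of each regime, and separately dispatch the small-$n$ and extreme-$m$ base cases that the recurrences do not cover. A lesser nuisance is that a two-parameter identity yields a recurrence whose order and leading coefficient must be controlled uniformly in $m$; splitting into the $m$-regimes above is precisely what makes this manageable.
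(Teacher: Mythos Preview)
Your reduction to the alternating-sum form is exactly the paper's equation \eqref{eq:binomial-summation-family}, and your polynomial repackaging
\[
(1+x)^{4n+1}P_n(-x)=(1+x)^{2n}P_n\!\left(\frac{x}{1+x}\right)+x^{4n+1}P_n(1+x)
\]
is a correct and attractive way to say the same thing. Where your plan and the paper's proof diverge is in the execution of the creative-telescoping step, and the paper's choices neatly avoid both obstacles you flag.

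First, the paper does not split into $m$-regimes. After the substitution $n\mapsto n/2$ and a cosmetic shift, it observes that the left-hand side is invariant under $(j,m)\mapsto(n-j,4n-m)$ while the two right-hand terms are swapped by it; since moreover for $m<2n$ only the first survives and for $m>2n$ only the second survives (and $m=2n$ is trivially zero on both sides), the whole family collapses by symmetry to the single clean identity
\[
\sum_{j=0}^{n}(-1)^{n+j}\frac{\binom{n}{j}\binom{3n}{m+j-n}}{\binom{3n}{n+j}}
=\sum_{j=0}^{n}\frac{\binom{n}{j}\binom{n+j-1}{m+j-n}}{\binom{3n}{n+j}},\qquad 0\le m\le 2n-1.
\]
This is exactly the ``regime reduction'' you were bracing for, achieved in one stroke via the self-reciprocity you already noticed for $P_n$.

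Second, the paper runs Zeilberger in $m$, not in $n$: it produces a common second-order recurrence
\[
U(n,m)S_\alpha(n,m+2)+V(n,m)S_\alpha(n,m+1)+W(n,m)S_\alpha(n,m)=0
\]
satisfied by both sides, with $U(n,m)=(m+2)(2n-m-1)\neq 0$ on the reduced range, so equality follows from the trivially checked base cases $m=0,1$. Because $n$ is held fixed, the uniform-order and leading-coefficient worries you raise simply do not arise.

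Your route would work, but the paper's symmetry observation plus the choice of $m$ as the Zeilberger variable makes the argument short and certificate-light.
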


\begin{proof}
Since $\beta_{n,k}=2\alpha_{n,2k}$, it makes sense to rearrange the terms between the two sides of the equation \eqref{eq:lineqs}, arriving at the equivalent relation
\begin{equation}
\sum_{j=1}^{2n+1} (-1)^{j-1} \binom{6n}{m-4n+j}\alpha_{n,j} = \sum_{j=1}^{2n+1} \left(\binom{2n+j-2}{m-4n+j}+\binom{4n-j}{m-6n}\right)\alpha_{n,j}.
\label{eq:binomial-summation-family}
\end{equation}
It is now worth noting for aesthetic reasons, although not strictly necessary for our purposes, that, as occurred in the proof of Lemma~\ref{lem:normalization} above, a version of this identity also holds when $n$ is a half-integer, although to get the correct identity it is necessary to insert in two places a small sign correction of $(-1)^{2n}$. Making this correction, and substituting $\alpha_{n,k}$ into the equation, with the irrelevant scaling factor $\frac{(4n+1)!}{(6n+1)(2n)!^2}$ (which does not depend on $j$) removed from both sides, the identity we need to prove becomes
\begin{align}
\nonumber
&\sum_{j=1}^{2n+1} (-1)^{2n+j} \frac{\binom{2n}{j-1}}{\binom{6n}{2n+j-1}}\binom{6n}{m-4n+j} 
\qquad\qquad\qquad \\ &
\hspace{30pt}
= \sum_{j=1}^{2n+1} \frac{\binom{2n}{j-1}}{\binom{6n}{2n+j-1}}\left(\binom{2n+j-2}{m-4n+j}+(-1)^{2n} \binom{4n-j}{m-6n}\right).
\label{eq:binomial-sum-symmetric1}
\end{align}
Now replace $n$ by $n/2$, and perform two additional simplifications of a purely cosmetic nature, namely shifting the summation index $j$ by $1$ and replacing the parameter $m$ by $m+n-1$. This brings \eqref{eq:binomial-sum-symmetric1} to the form
\begin{align}
&\sum_{j=0}^{n} (-1)^{n+j} \frac{\binom{n}{j}}{\binom{3n}{n+j}}\binom{3n}{m+j-n} 
\qquad\qquad\qquad \nonumber \\ &
\hspace{50pt}
= \sum_{j=0}^{n} \frac{\binom{n}{j}}{\binom{3n}{n+j}}
\left(\binom{n+j-1}{m+j-n}+(-1)^n \binom{2n-j-1}{m-2n-1}\right),
\label{eq:binomial-sum-symmetric2}
\end{align}
where $n$ is assumed to be a positive integer and $m$ ranges over the values $0,1,2,\ldots,4n$.

Our next step is to exploit a subtle symmetry of the family of identities \eqref{eq:binomial-sum-symmetric2} in order to obtain one final simplification. The key observation is that the binomial coefficient $\binom{3n}{m-4n+j}$ on the left-hand side is invariant under the substitution $j\to n-j,\ m\to 4n-m$. On the right-hand side, the same substitution transforms the two binomial coefficients $\binom{n+j-1}{m+j-n}$ and $\binom{2n-j-1}{m-2n-1}$ into each other. Note further that when $0\le m\le 2n-1$, only the first of those two coefficients can be nonzero (under the constraint that $0\le j\le n$), and when $2n+1\le m\le 4n$, only the second one can be nonzero. When $m=2n$ both of them are zero, and in this case it is easy to verify that the left-hand side is a sum with alternating signs of the binomial coefficients $\binom{n}{j}$ and therefore also zero. Thus, by symmetry it is enough to verify \eqref{eq:binomial-sum-symmetric2} for $0\le m\le 2n-1$, and for that range of values it becomes the relation
\begin{equation}
\sum_{j=0}^{n} (-1)^{n+j} \frac{\binom{n}{j}\binom{3n}{m+j-n} }{\binom{3n}{n+j}}
= \sum_{j=0}^{n} \frac{\binom{n}{j}\binom{n+j-1}{m+j-n}}{\binom{3n}{n+j}}.
\label{eq:binomial-sum-asymmetric}
\end{equation}
Following this preparation, the well-known algorithmic methods for proving hypergeometric summation identities---specifically, Zeilberger's algorithm---once again come to our aid. Denote
\begin{align*}
F_1(n,m,j) &= (-1)^{n+j} \frac{\binom{n}{j}\binom{3n}{m+j-n} }{\binom{3n}{n+j}},  \qquad S_1(n,m)=\sum_{j=0}^n F_1(n,m,j),\\
F_2(n,m,j) &= \frac{\binom{n}{j}\binom{n+j-1}{m+j-n}}{\binom{3n}{n+j}}, \phantom{(-1)^{n+j}} \qquad S_2(n,m)=\sum_{j=0}^n F_2(n,m,j), \end{align*}
and define the additional auxiliary functions
\begin{align*}
U(n,m) &= (m+2)(2n-m-1), \\
V(n,m) &= -2m^2-5n^2+8mn+9n-4m-2, \\
W(n,m) &= (m-4n)(2n-m-1), \\
R_1(n,m,j) &= \frac{j(3n+1)(2n-j+1)}{n-m-j-1}, \\
R_2(n,m,j) &= \frac{j(m-2n+1)(2n-j+1)}{n-m-j-1}, \\
G_1(n,m,j) &= R_1(n,m,j)F_1(n,m,j), \\
G_2(n,m,j) &= R_2(n,m,j)F_2(n,m,j).
\end{align*}
The functions $U(n,m)$, $V(n,m)$, $W(n,m)$, $R_1(n,m,j)$, and $R_2(n,m,j)$  were found using Zeilberger's algorithm, and one may verify by a routine calculation (see the companion package \cite{romik-mathematica}) that they satisfy the pair of algebraic identities
\begin{align}
U(n,m)& F_\alpha(n,m+2,j)  + V(n,m)F_\alpha(n,m+1,j) + W(n,m)F_\alpha(n,m,j) 
\nonumber \\ &  = 
G_\alpha(n,m,j+1)-G_\alpha(n,m,j) \qquad\qquad (\alpha=1,2),
\label{eq:creative-telescoping}
\end{align}
where, importantly, the polynomials $U(n,m), V(n,m), W(n,m)$ do not depend on $j$ or $\alpha$. Summing both sides of \eqref{eq:creative-telescoping} over all integer $j$ results in a telescoping sum on the right-hand side, which (since $G_\alpha(n,m,j)=0$ if $j<0$ or $j>n$) is equal to $0$. Thus, we get the relations
\begin{equation} \label{eq:second-order-recurrence}
U(n,m)S_\alpha(n,m+2) + V(n,m)S_\alpha(n,m+1) + W(n,m)S_\alpha(n,m) = 0
\end{equation}
for $\alpha=1,2$. That is, we have shown that the sums $S_1(n,m), S_2(n,m)$ satisfy the same second-order linear recurrence relation with respect to the variable~$m$.

Finally, note that $U(n,m)\neq 0$ for $0\le m\le 2n-2$, so for those values of $m$ we can solve the recurrence \eqref{eq:second-order-recurrence} for $S_\alpha(n,m+2)$, expressing it in terms of $S_\alpha(n,m+1)$ and $S_\alpha(n,m)$. We therefore see that it is enough to verify that $S_1(n,0)=S_2(n,0)$ and $S_1(n,1)=S_2(n,1)$, after which the general equality $S_1(n,m)=S_2(n,m)$ (which is precisely the relation \eqref{eq:binomial-sum-asymmetric} we are trying to prove) follows for all $0\le m\le 2n$ by induction on $m$. The verification of the base cases $m=0,1$ is trivial and is left as an exercise.
\end{proof}

Combining Lemmas~\ref{lem:system-of-equations}, \ref{lem:normalization} and \ref{lem:lineqs} establishes the summation identity \eqref{eq:identity-betank} with the coefficients $\beta_{n,k}$ as defined in \eqref{eq:coeffs-guess2}, and therefore finishes the proof of \eqref{eq:bernoulli}.

\section{Eisenstein series}

\label{sec:eisenstein}

The key idea in our proof of the Bernoulli number summation identity \eqref{eq:bernoulli} in the last section was to find a pair of functions $f_n(p,q)$, $g_n(p,q)=f_n(p,q)-f_n(p+q,q)-f_n(p,p+q)$ which are both homogeneous polynomials in $p^{-1}, q^{-1}$ of degree $6n+2$, and such that $f_n(p,q)$ contains only a specific set of even powers of $p^{-1}$ and $q^{-1}$. Zagier, who introduced this method in \cite{zagier} to prove a more standard identity, already made the more general observation (see section 8 of his paper) that any summation identity proved using the same technique automatically ``lifts'' to a summation identity for the Eisenstein series, by a modification of the argument. To see how this works in our setting, instead of summing $g_n(p,q)$ over all pairs of positive integers $p,q$ as we did in \eqref{eq:sum-gofpq1}, \eqref{eq:sum-gofpq2}, now sum instead over all pairs of complex numbers $p,q$ ranging in the ``half-lattice''
$$ \Lambda_+(z) = \{ p=mz + n \,:\, m>0\textrm{ or }[m=0\textrm{ and }n>0] \} $$
(where $z$ is a fixed complex number in the upper half-plane $\mathbb{H}$, which will shortly become the argument of the Eisenstein series). This summation yields, on the one hand (analogously to \eqref{eq:sum-gofpq1}), the expression
$$
\sum_{k=1}^n \beta_{n,k} \left(\sum_{p,q \in \Lambda_+(z)}^\infty p^{-2n-2k}q^{-4n+2k-2} \right) = \sum_{k=1}^n \beta_{n,k} \frac{G_{2n+2k}(z)}{2}\frac{G_{4n-2k+2}(z)}{2}.
$$
On the other hand, for two elements $r=mz+n$ and $s=m'z+n'$ of the half-lattice $\Lambda_+(z)$, denote the order relation $r\succ s$ to mean that 
$m>m'$ or [$m=m'$ and $n>n'$], and note that $r\succ s$ if and only if $r=s+q$ for some $q\in\Lambda_+(z)$. Then the summation above can also be written, in analogy with~\eqref{eq:sum-gofpq2}, as
\begin{align*}
& \sum_{p,q \in \Lambda_+(z)}^\infty f_n(p,q) - \sum_{p,q \in \Lambda_+(z)}^\infty f_n(p+q,q) - \sum_{p,q \in \Lambda_+(z)}^\infty f_n(p,p+q) 
\nonumber \\ &= \Bigg(
\sum_{r,s \in \Lambda_+(z)}^\infty - \sum_{\begin{array}{c} \\[-17pt]\scriptstyle r,s \in \Lambda_+(z) \\[-5pt] \scriptstyle r \succ s \end{array}} -
\sum_{\begin{array}{c} \\[-17pt]\scriptstyle r,s \in \Lambda_+(z) \\[-5pt] \scriptstyle s \succ r \end{array}}
\Bigg) f_n(r,s)
= \sum_{r \in \Lambda_+(z)}^\infty f_n(r,r) \nonumber \\ &= \left(\sum_{j=1}^{2n+1} \alpha_{n,j} \right) \frac{G_{6n+2}(z)}{2} 
=\frac{G_{6n+2}(z)}{2}.
\end{align*}
Thus we obtain precisely the identity \eqref{eq:eisenstein}, proving Theorem~\ref{thm:eisenstein}.
\qed

\bigskip
In connection with the proof above, we remark that Zagier's observation about the lifting of summation relations to Eisenstein series was made in the context of considering general summation relations of the form 
\begin{equation}
\label{eq:eisenstein-linear-relation}
G_{2k} = \sum_{2\le j<k} c_j G_{2j} G_{2k-2j}
\end{equation}
expressing a given Eisenstein series in terms of the Eisenstein series with lower index. 
It is apparent from the formula for $\dim M_{2k}$ mentioned in Subsection~\ref{subsec:bernoulli} that for any fixed $k$ there must be a certain number of linearly independent relations of this kind (the number of such relations increases linearly with $k$), and one can use standard algorithms from linear algebra and the theory of modular forms to find all such relations.
From this point of view, the existence of summation relations of the form \eqref{eq:eisenstein-linear-relation}---again, for specific values of $k$---is not especially surprising.

What is \emph{not} necessarily apparent, and it is not clear to us from the discussion in \cite{zagier} whether it was envisioned by Zagier, was that his technique can also be used to prove interesting \emph{families} of summation relations (other than the single relatively trivial example discussed in section 1 of his paper) that hold for many values of $k$ and have the kind of elegant structure that our identity \eqref{eq:eisenstein} possesses. The discovery that this is in fact the case makes Zagier's technique appear a lot more exciting, and opens the way to discovering and proving---possibly in an algorithmic fashion---additional families of identities; see Section~\ref{sec:final-comments} for a further brief discussion of these possibilities.

\section{The function $\wzeta(s)$ and its analytic continuation}
\label{sec:wzeta}

The goal of this section is to prove Theorems~\ref{thm:analytic-continuation} and \ref{thm:wzeta-specialvalues}. Our approach is based to a large extent on an integral representation derived by \nobreak{Matsumoto} for the more general Dirichlet series $\zeta_{MT}(s_1,s_2,s_3)$ defined in \eqref{eq:matsumoto-mordell-tornheim} (see equation (5.3) in \cite{matsumoto}).

\bigskip\noindent
\textbf{The region of convergence: proof of Theorem~\ref{thm:analytic-continuation}(1).}
We start by giving a simple argument to prove the claim of Theorem~\ref{thm:analytic-continuation}(1) about the precise region of convergence of the Dirichlet series defining $\wzeta(s)$. Observe that
\begin{align*}
\wzeta(s) &=\sum_{j,k=1}^\infty \frac{1}{(j k(j+k))^s} \\ &= 
 \left( \sum_{j> k \ge 1}^\infty \frac{1}{(j k(j+k))^s} + 
\sum_{k>j \ge 1}^\infty \frac{1}{(j k(j+k))^s} + 
\sum_{k=j \ge 1}^\infty \right) \frac{1}{(j k(j+k))^s} 
\\ &=  2\sum_{j>k\ge 1}^\infty \frac{1}{j^{2s} k^s \left(1+\frac{k}{j}\right)^s} + 2^{-s} \zeta(3s).
\end{align*}
Here, the series for $2^{-s}\zeta(3s)$ converges if and only if $\re(s)>1/3$, and the summation over $j>k\ge1$ converges absolutely if and only if the simpler series
\begin{equation} \label{eq:simpler-series}
\sum_{j>k\ge 1}^\infty \frac{1}{\left|j^{2s} k^s\right|}
=\sum_{j>k\ge 1}^\infty \frac{1}{j^{2\sigma} k^\sigma}
\end{equation}
(where $\sigma=\re(s)$)
converges absolutely, since on the range of summation the multiplicative factor $\big(1+\frac{k}{j}\big)^{-s}$ is bounded between $2^{-s}$ and $1$. It is now trivial to check that the series in \eqref{eq:simpler-series} converges absolutely and uniformly on compacts (which ensures that the sum is a holomorphic function) if and only if $\re(s)>2/3$, for example by comparison with the integral
$$ \int_1^\infty \int_x^\infty \frac{1}{x^{2\sigma} y^\sigma} \,dy\,dx = \frac{1}{\sigma-1} \int_1^\infty x^{1-3\sigma}\, dx. 
\rlap{\qquad\qquad\qquad\qedsymbol}
$$

\paragraph{Analytic continuation: proof of Theorem~\ref{thm:analytic-continuation}(2).}
Next, we proceed with the analytic continuation of $\wzeta(s)$ to a meromorphic function on $\C$. We recall Matsumoto's ingenious method to achieve this (which he applied in a more general context), but add a few more details and motivation that are missing from his analysis and that we feel add some important insight in view of our newer discoveries about the significance of $\wzeta(s)$. Start with a standard contour integral formula from the class of Mellin-Barnes integrals \cite[eq.~(3.3.9), p.~91]{paris-kaminski}, namely
\begin{equation}
\label{eq:mellin-barnes}
\Gamma(s) (1+\lambda)^{-s} = \frac{1}{2\pi i} \cint_{(\alpha)} \Gamma(s+z)\Gamma(-z) \lambda^z\,dz,
\end{equation}
where ${(\alpha)}$ denotes the vertical contour from $\alpha-i\infty$ to $\alpha+i\infty$ (with $i=\sqrt{-1}$, here and throughout), and where we assume that 
$\lambda \in \C\setminus (-\infty,0]$ and $-\re(s)<\alpha<0$. Setting $\lambda=k/j$, multiplying by $j^{-2s}k^{-s}$ and summing over $j,k\in \N$ gives that
\begin{align}
\Gamma(s) \wzeta(s) &= 
\Gamma(s)  \sum_{j,k=1}^\infty j^{-2s} k^{-s}\left(1+\frac{k}{j}\right)^{-s}
\nonumber \\&= \frac{1}{2\pi i} \sum_{j,k=1}^\infty j^{-2s} k^{-s} \cint_{(\alpha)} \Gamma(s+z)\Gamma(-z) j^{-z} k^z \,dz
\nonumber \\ &=\frac{1}{2\pi i}\cint_{(\alpha)} \Gamma(s+z)\Gamma(-z)  \sum_{j,k=1}^\infty  j^{-2s-z} k^{-s+z}  \,dz
\nonumber \\ &=\frac{1}{2\pi i}\cint_{(\alpha)} \Gamma(s+z)\Gamma(-z)  \zeta(2s+z)\zeta(s-z)  \,dz,
\label{eq:wzeta-integral-rep1}
\end{align}
provided the infinite summations converge and that the interchange of the summation and integration operations can be justified. Note that the two zeta series within the integrand can be summed precisely when $\re(2s+z)>1$ and $\re(s-z)>1$, that is, when $1-2\sigma<\alpha<\sigma-1$ (where $\sigma=\re(s)$ as before). Together with the constraints $-\sigma<\alpha<0$, we see that $\alpha$ and $\sigma$ must satisfy
\begin{equation}
\label{eq:feasible-region}
\max(-\sigma, 1-2\sigma) < \alpha < \min(0,\sigma-1).
\end{equation}
As illustrated in Figure~\ref{fig:sing-diag}(a), a feasible value of $\alpha$ satisfying these constraints exists if and only if $\sigma>2/3$, which is consistent with our earlier observations on the region of convergence of the Dirichlet series defining $\wzeta(s)$. If $\sigma>2/3$, and a valid value of $\alpha$ is chosen (for example, the value $\alpha=-1/3$ works), the interchange of the summation and integration operations is easy to justify using standard facts about the exponential decay of the gamma function, and the (at most) polynomial growth of the zeta function, along vertical lines, namely that for $z=x+iy$, $|y|\ge1$, we have that
\begin{align}
\label{eq:rateofgrowth-bound-gamma}
|\Gamma(x+iy)| &= O\left(|y|^A e^{-\tfrac12 \pi |y|}\right) , \\
\label{eq:rateofgrowth-bound-zeta}
|\zeta(x+iy)| &=  O(|y|^A).
\end{align}
where $A$ is a constant that depends only on $x$, and where both $A$ and the constant implicit in the big-$O$ notation are uniform as $x$ ranges in compact subsets of $\R$; see \cite[Section~2.1.3]{paris-kaminski} and \cite[Section~6.4]{edwards} for the proofs of these relations.

Summarizing the above discussion, we conclude that the integral in \eqref{eq:wzeta-integral-rep1} is indeed a valid representation for $\Gamma(s)\wzeta(s)$. We remark also that the bounds \eqref{eq:rateofgrowth-bound-gamma} and \eqref{eq:rateofgrowth-bound-zeta} together with standard facts from complex analysis will also provide easy justifications for all subsequent manipulations of contour integrals in this paper (e.g., shifting the contour and differentiation under the integral). We assume the reader is familiar with such techniques and will mention these justifications only in passing without going into detail.

The next step is to analytically continue the integral to a larger region. This is done by shifting the contour of integration and taking into account the residues. Note that the integrand has poles (as a function of $z$ with $s$ fixed) at:
\begin{itemize}
\item $z=s-1$ because of the factor $\zeta(s-z)$;
\item $z=1-2s$ because of the factor $\zeta(2s+z)$;
\item $z=0,1,2,\ldots$ because of the factor $\Gamma(-z)$;
\item $z=-s,-s-1,-s-2,\ldots$ because of the factor $\Gamma(s+z)$.
\end{itemize}
This means that we should look for a value of $\alpha$ for the new integration contour $(\alpha)$ for which $\sigma=\re(s)$ is allowed to range over a wide range of positive and negative values without intersecting the lines in the $\sigma$-$\alpha$ plane defined by the equations
\begin{align}
\label{eq:singularity-line1} \alpha&=\sigma-1, \\ 
\label{eq:singularity-line2} \alpha&=1-2\sigma, \\
\label{eq:singularity-line3} \alpha&=n\phantom{\ -\sigma-} \qquad (n=1,2,\ldots), \\
\label{eq:singularity-line4} \alpha&=-\sigma-n \qquad (n=0,1,\ldots).
\end{align}
The situation can be visualized graphically in terms of the ``singularity diagram'' shown in Figure~\ref{fig:sing-diag}(b), which shows the pairs $(\sigma,\alpha)$ that cause the integral to pass through a singularity point.

\begin{figure}[h!]
\hspace{-10pt}
\begin{tabular}{ccc}
\scalebox{0.48}{\includegraphics{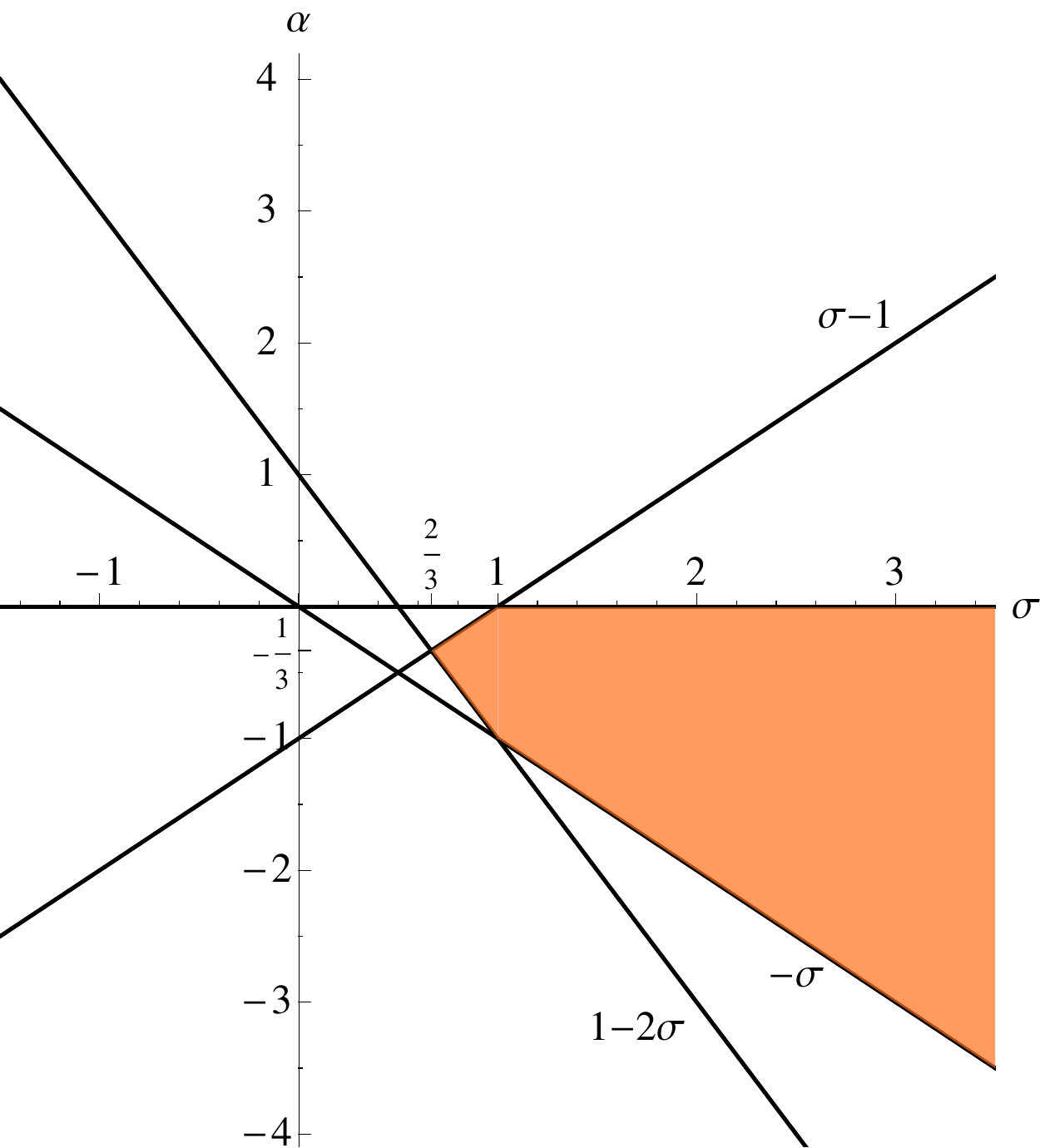}}
&  &
\scalebox{0.48}{\includegraphics{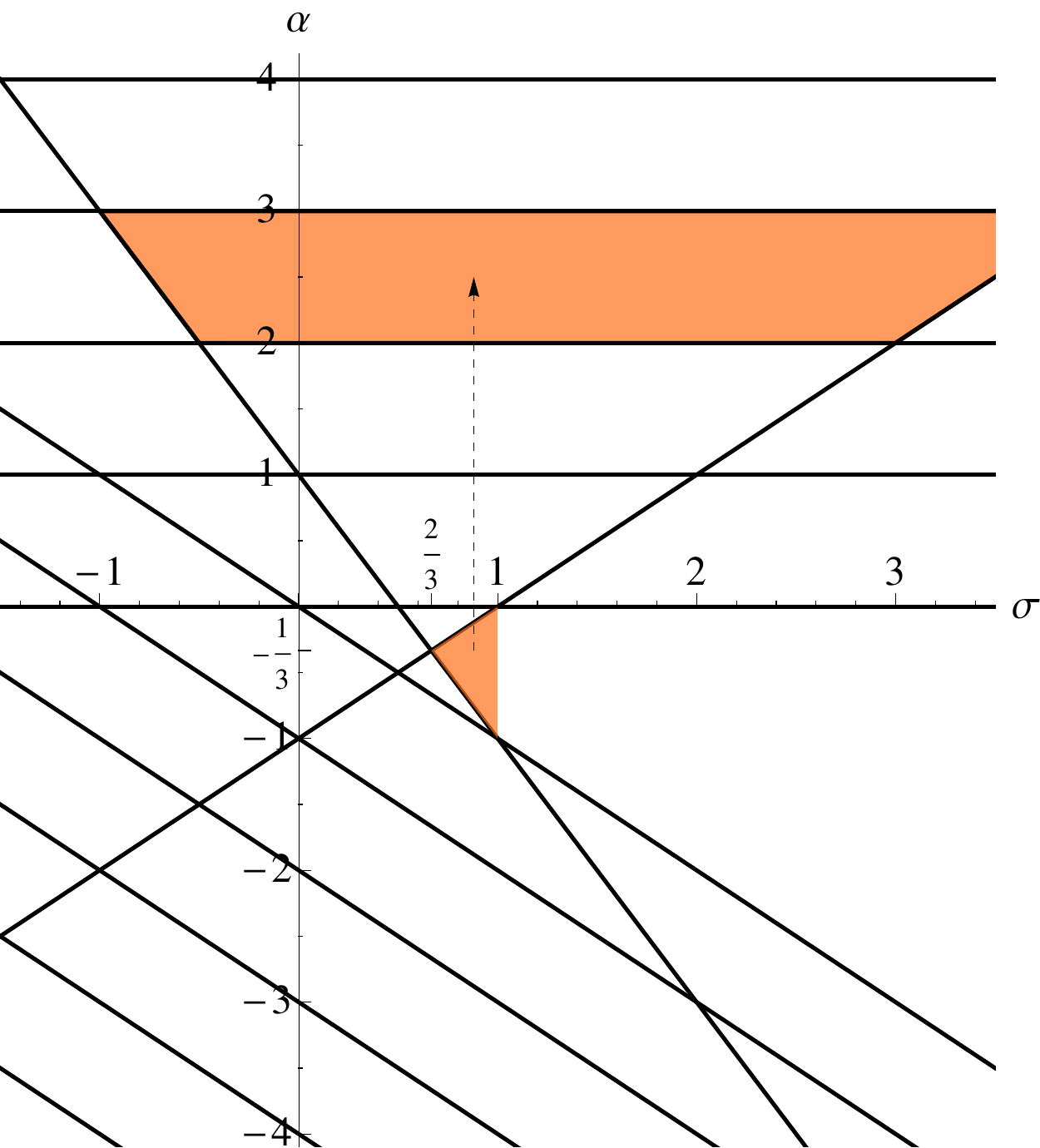}}
\\ (a) & & (b)
\end{tabular}
\begin{center}
\caption{(a) The shaded area in the diagram represents the feasible region of pairs $(\sigma,\alpha)$ for which the constraints \eqref{eq:feasible-region} are satisfied and therefore the
representation \eqref{eq:wzeta-integral-rep1} is valid.
(b) The lines in this diagram represent pairs of $(\sigma,\alpha)$ where the integrand in \eqref{eq:wzeta-integral-rep1} will run into a singularity. To analytically continue $\wzeta(s)$, we start at $(\sigma,-1/3)$ where $\sigma$ is initially allowed to vary in the interval $(\tfrac23,1)$, (the triangular shaded region) and move up along the $\alpha$ axis (which corresponds to shifting the contour of integration to the right) to a region of the diagram (the trapezoidal shaded area) where $\sigma$ can be varied across a much wider interval without running into singularities.
}
\label{fig:sing-diag}
\end{center}
\end{figure}

We are ready to shift the integration contour in the representation \eqref{eq:wzeta-integral-rep1}. Simplifying Matsumoto's argument slightly, we assume for convenience that initially $\tfrac23 < \sigma < 1$, so that we can avoid having to consider the double poles that result when two of the singularity lines intersect. The diagram in Figure~\ref{fig:sing-diag}(b) shows that it is a good idea to shift the contour $(\alpha)$ to the right (moving up in the coordinate system of the diagram) to $(\alpha')$ where $\alpha'=M-1/2$ and $M$ is some large integer. As the diagram shows, this involves skipping over the pole at $z=s-1$, which is a pole with residue $-\Gamma(2s-1)\Gamma(1-s)\zeta(3s-1)$; and the pole at $k$ for each $k=0,1,2,\ldots,M-1$, which has residue $\frac{(-1)^{k+1}}{k!}\Gamma(s+k)\zeta(2s+k)\zeta(s-k)$. Thus, by the residue theorem we have (still assuming $\tfrac23 < \sigma < 1$) that
\begin{align}
\nonumber \Gamma(s) \wzeta(s)
&= \Gamma(2s-1)\Gamma(1-s)\zeta(3s-1) 
\\ & \qquad + \sum_{k=0}^{M-1} 
\frac{(-1)^{k}}{k!}\Gamma(s+k)\zeta(2s+k)\zeta(s-k)
\nonumber \\ & \qquad  
+\frac{1}{2\pi i}\cint_{(M-1/2)} \Gamma(s+z)\Gamma(-z)  \zeta(2s+z)\zeta(s-z)  \,dz.
\label{eq:wzeta-integral-rep2}
\end{align}
Furthermore, Figure~\ref{fig:sing-diag}(b) shows graphically, and the equations \eqref{eq:singularity-line1}--\eqref{eq:singularity-line4} can easily be used to show algebraically,
that the integrand in \eqref{eq:wzeta-integral-rep2} encounters no singularities as $s$ ranges in the strip 
\begin{equation} \label{eq:strip}
\frac34-\frac{M}{2}<\sigma<M+\frac12.
\end{equation}
It follows (using \eqref{eq:rateofgrowth-bound-gamma}--\eqref{eq:rateofgrowth-bound-zeta}) that the integral defines a function which is holomorphic in the same strip. Since $M$ was arbitrarily large, we have therefore succeeded in extending $\wzeta(s)$ to a meromorphic function on the entire complex plane, proving Theorem~\ref{thm:analytic-continuation}(2).
\qed

\bigskip \noindent
\noindent \textbf{Analysis of the poles and values at nonnegative integers: proof of Theorem~\ref{thm:analytic-continuation}(3), Theorem~\ref{thm:wzeta-specialvalues}.}
We now extend Matsumoto's analysis further
by examining 
the singularities of $\wzeta(s)$ and its values at nonnegative integers. 
First, consider $\Gamma(s)\wzeta(s)$, whose analytic continuation is given by the right-hand side of \eqref{eq:wzeta-integral-rep2}. Here is a list of the poles of $\Gamma(s)\wzeta(s)$ contributed from each multiplicative factor in each of the additive terms, along with their residues.
\begin{enumerate}

\item The poles of the term $\Gamma(2s-1)\Gamma(1-s)\zeta(3s-1)$ are:
\begin{itemize}
\item A simple pole at $s=\tfrac23$ because of the factor $\zeta(3s-1)$. The residue is $\tfrac13 \Gamma\left(\tfrac13\right)^2$.
\item Potential simple poles at $s=\tfrac12, 0, -\tfrac12, -1,-\tfrac32,-2, \ldots$ because of the factor $\Gamma(2s-1)$. The residue at $s=\frac{1-k}{2}$ for each integer $k\ge0$ is $\frac{(-1)^k}{2\cdot k!} \Gamma\left(\frac{1+k}{2}\right)\zeta\left(\frac{1-3k}{2}\right)$. Note that for $k=3,7,11,15,\ldots$ the residue is $0$ because of the trivial zeros of $\zeta(s)$ at the negative even integers, so the poles of the term $\Gamma(2s-1)\Gamma(1-s)\zeta(3s-1)$ at these values are in fact removable singularities.
\item Simple poles at $s=1,2,3,\ldots$ because of the factor $\Gamma(1-s)$. The residue at $s=k+1$ is equal to $\frac{(-1)^{k+1}}{k!}\Gamma(2k+1)\zeta(3k+2) = \frac{(-1)^{k+1} (2k)!}{k!}\zeta(3k+2)$ for each integer $k\ge 0$.
\end{itemize}

\item The poles of $\frac{(-1)^k}{k!}\Gamma(s+k)\zeta(2s+k)\zeta(s-k)$ for $k=0,1,\ldots,M-1$ are:
\begin{itemize}
\item A simple pole at $s=k+1$ because of the factor $\zeta(s-k)$. The residue is 
$\frac{(-1)^k}{k!}\Gamma(2k+1)\zeta(3k+2) = \frac{(-1)^k(2k)!}{k!}\zeta(3k+2)$.
\item A simple pole at $s=\frac{1-k}{2}$ because of the factor $\zeta(2s+k)$. The residue is $\frac{(-1)^k}{2\cdot k!} \Gamma\left(\frac{1+k}{2}\right)\zeta\left(\frac{1-3k}{2}\right)$.
\item Potential simple poles at $s=-k,-k-1,-k-2,\ldots$ because of the factor $\Gamma(s+k)$. The residue at $s=-k-j$ for each integer $j\ge 0$ is $\frac{(-1)^k}{k!}\cdot \frac{(-1)^j}{j!} \zeta(-k-2j)\zeta(-2k-j)$. As above, some of them end up being removable singularities because of cancellation with a trivial zero of $\zeta(s)$.
\end{itemize}

\end{enumerate}
 
Adding up the contributions from the different terms (assuming that $M$ is arbitrarily large so we can ignore the edge effects), we see that the poles of $\Gamma(s)\wzeta(s)$ at $s=1,2,3,\ldots$ have residue $0$ and are in fact removable singularities (which could have been predicted in advance, as we already knew that $\Gamma(s)\wzeta(s)$ is analytic at those points). The remaining poles consist of a ``special'' pole at $s=\tfrac23$ with residue $\tfrac13\Gamma\left(\tfrac13\right)^2$, and a series of (potential) poles at half-integer points of the form $s=\frac{1-k}{2}$ for $k=0,1,2,\ldots$, where for an even value $k=2m$ the residue is
\begin{align*}
&\frac{(-1)^{2m}}{(2m)!}\Gamma\left(m+\tfrac12\right)\zeta\left(\frac{1-6m}{2}\right) 
\\ &= \frac{1\cdot3\cdot5\cdot\ldots\cdot (2m-1)}{(2m)! 2^m}\Gamma\left(\tfrac12\right)\zeta\left(\frac{1-6m}{2}\right)
= \frac{1}{4^m m!}\sqrt{\pi} \zeta\left(\frac{1-6m}{2}\right),
\end{align*}
and for $k=2m+1$ an odd integer, the residue is
\begin{align*}
\rho_m & :=\frac{(-1)^{2m+1}}{(2m+1)!}\Gamma(m+1)\zeta(-3m-1)
\\ & \qquad +\frac{(-1)^m}{m!}\sum_{j=0}^m \binom{m}{j}\zeta(-m-j)\zeta(-2m+j)
\\ & \phantom{:}= -\frac{m!}{(2m+1)!}\zeta(-3m-1) +
\frac{(-1)^m}{m!}\sum_{j=0}^m \binom{m}{j}\zeta(-m-j)\zeta(-2m+j).
\end{align*}
Note that $\rho_0=\zeta(0)^2-\zeta(-1)=\left(-\frac{1}{2}\right)^2+\frac{1}{12}=\frac13$. However, as we discovered to our surprise, the remaining values $\rho_1,\rho_2,\ldots$ are all zero! For odd values of $m$ this is trivially true since $\zeta(-3m-1)$ is a trivial zero of the Riemann zeta function, and in the summation over $j$, for each $j$ one of the factors (depending on the parity of $j$) is also a trivial zeta zero. For even values of $m$, denoting $m=2p$, we see that the claim reduces to the summation identity
$$
\zeta(-6p-1) = 
(4p+1)\binom{4p}{2p}\sum_{j=0}^{2p} \binom{2p}{j} \zeta(-2p-j)\zeta(-4p+j),
$$
which is precisely the version \eqref{eq:bernoulli-zeta-minus} of our Bernoulli number summation identity (note that the summation over $j$ can be taken only over odd values, again because of the trivial zeta zeros). Thus, \eqref{eq:bernoulli} and its reformulations \eqref{eq:bernoulli-zeta-plus}, \eqref{eq:bernoulli-zeta-minus} turn out to be equivalent to the claim that $\Gamma(s)\wzeta(s)$ has no poles at the negative integers. Since we, and Agoh and Dilcher before us, proved the former result using other methods, the latter claim about $\Gamma(s)\wzeta(s)$ follows.

We summarize our findings about $\Gamma(s)\wzeta(s)$ in the following theorem, which will play an important role in our asymptotic analysis of the representation enumeration function $r(n)$.

\begin{thm}
\label{thm:eofs-asym}
The meromorphic function
$\Gamma(s)\wzeta(s)$ possesses a simple pole at $s=\tfrac23$ with residue $\frac13 \Gamma\left(\tfrac13\right)^2$; a simple pole at $s=0$ with residue~$\frac13$; and for each $k=0,1,2,\ldots$ a simple pole at $s=\tfrac12-k$ with residue $ \frac{1}{4^k k!} \sqrt{\pi} \zeta\left(\frac{1-6k}{2}\right)$. It has no other singularity points.
\end{thm}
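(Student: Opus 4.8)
The statement is essentially a summary of the pole analysis developed in the preceding discussion, so the plan is to assemble that analysis into a clean residue bookkeeping. The starting point is the analytic continuation formula \eqref{eq:wzeta-integral-rep2} for $\Gamma(s)\wzeta(s)$, obtained by shifting the Mellin--Barnes contour in \eqref{eq:wzeta-integral-rep1} past the poles at $z=s-1$ and $z=0,1,\dots,M-1$ (working initially in the strip $\tfrac{2}{3}<\sigma<1$ so as to avoid double poles, then letting $M\to\infty$). Using the growth bounds \eqref{eq:rateofgrowth-bound-gamma}--\eqref{eq:rateofgrowth-bound-zeta}, the residual integral $\frac{1}{2\pi i}\int_{(M-1/2)}\Gamma(s+z)\Gamma(-z)\zeta(2s+z)\zeta(s-z)\,dz$ is holomorphic on the strip \eqref{eq:strip}; hence, since $M$ is arbitrary, every singularity of $\Gamma(s)\wzeta(s)$ must come from one of the finitely many explicit terms $\Gamma(2s-1)\Gamma(1-s)\zeta(3s-1)$ and $\tfrac{(-1)^k}{k!}\Gamma(s+k)\zeta(2s+k)\zeta(s-k)$.

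Next I would list the poles and residues of each such term, exactly as tabulated above, and then proceed candidate-singularity by candidate-singularity. At $s=1,2,3,\dots$ the pole of $\Gamma(2s-1)\Gamma(1-s)\zeta(3s-1)$ coming from $\Gamma(1-s)$ cancels the pole of the $k$-th sum term coming from $\zeta(s-k)$, since their residues are exact negatives, so these are removable (as they must be, $\wzeta$ being holomorphic and $1/\Gamma$ having a simple zero there). At $s=\tfrac{2}{3}$ only $\zeta(3s-1)$ contributes, producing the simple pole with residue $\tfrac13\Gamma(\tfrac13)^2$. At each half-integer $s=\tfrac{1-k}{2}$ ($k\ge0$) the contributions come from $\Gamma(2s-1)$ in the first term, from $\zeta(2s+k)$ in the $k$-th sum term, and, when $k$ is odd, from the $\Gamma(s+k')$ factors; I would add these, splitting into $k=2m$ and $k=2m+1$.

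For $k=2m$ even, the $\Gamma(s+k')$ factors contribute nothing at $s=\tfrac12-m$ and the total residue collapses, via $\Gamma(m+\tfrac12)=\tfrac{(2m)!}{4^m m!}\sqrt\pi$, to $\tfrac{1}{4^m m!}\sqrt\pi\,\zeta\!\big(\tfrac{1-6m}{2}\big)$; relabelling $m\to k$ gives the asserted residue at $s=\tfrac12-k$. For $k=2m+1$ odd the total residue equals $\rho_m$, and one computes $\rho_0=\zeta(0)^2-\zeta(-1)=\tfrac14+\tfrac1{12}=\tfrac13$, which is the claimed pole at $s=0$. The one genuinely nontrivial point — the step I expect to be the crux — is to show $\rho_m=0$ for all $m\ge1$, so these potential poles are in fact removable. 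For odd $m$ this is immediate from the trivial zeros of $\zeta$ (every term on both sides of the formula for $\rho_m$ picks up a factor $\zeta$ at a negative even integer); for even $m=2p$ it is equivalent to the reformulation \eqref{eq:bernoulli-zeta-minus} of the Agoh--Dilcher identity \eqref{eq:bernoulli}, already established by the Zagier-type argument of Section~\ref{sec:bernoulli}. Invoking that identity closes this case. Finally, since this enumeration exhausts the poles of all terms for arbitrarily large $M$ and the residual integral contributes no singularities, $\Gamma(s)\wzeta(s)$ has no singularity other than those listed, completing the proof.
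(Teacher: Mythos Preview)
Your proposal is correct and follows essentially the same approach as the paper: starting from the shifted-contour representation \eqref{eq:wzeta-integral-rep2}, cataloguing the poles of each explicit term, observing the cancellation at positive integers, computing the residues at $s=\tfrac23$ and at the half-integers $s=\tfrac{1-k}{2}$ by splitting into even and odd $k$, and finally invoking the Agoh--Dilcher identity \eqref{eq:bernoulli-zeta-minus} (proved in Section~\ref{sec:bernoulli}) to show $\rho_m=0$ for $m\ge1$. This is precisely the paper's argument, with the same organization and the same nontrivial input.
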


From Theorem~\ref{thm:eofs-asym} we can now easily infer part (3) of Theorem~\ref{thm:analytic-continuation}, and both claims \eqref{eq:wzeta-at-zero}, \eqref{eq:wzeta-trivialzeros} of Theorem~\ref{thm:wzeta-specialvalues}. This is based on two observations: first, 
for a complex number $s_0$ which is not a pole of the gamma function, we have
$$ \operatorname{Res}_{s=s_0}\Big(\wzeta(s)\Big) = \frac{1}{\Gamma(s_0)} \operatorname{Res}_{s=s_0}\Big(\Gamma(s)\wzeta(s)\Big).
$$
Second, at the poles $s_0=-n$ of $\Gamma(s)$, $n\ge 0$, we have
$$ 
\wzeta(s_0) = \frac{\operatorname{Res}_{s=s_0}\Big(\Gamma(s)\wzeta(s)\Big)}{\operatorname{Res}_{s=s_0}\Big(\Gamma(s)\Big)} = (-1)^n n! \operatorname{Res}_{s=-n}(\Gamma(s)\wzeta(s))
$$
(where the equation holds in the usual complex-analytic sense, that is, after removing the removable singularities).
Applying the first observation to $s_0=\tfrac23, \tfrac12,-\tfrac12,-\tfrac32,\ldots$, and the second observation to $s_0=0,-1,-2,\ldots$, gives the claims; for example, for $s_0=\tfrac23$ we get that
$$
\operatorname{Res}_{s=2/3} \Big(\wzeta(s)\Big) = \tfrac13\Gamma\left(\tfrac13\right)^2/\Gamma\left(\tfrac23\right),
$$
which is equal to $\frac{1}{2\pi \sqrt{3}} \Gamma\left(\tfrac13\right)^3$ by the reflection formula $\Gamma(x)\Gamma(1-x)=\frac{\pi}{\sin(\pi x)}$. We leave to the reader to fill in the details for the remaining poles of $\Gamma(s)\wzeta(s)$.

We have therefore proved parts (1)--(3) of Theorem~\ref{thm:analytic-continuation}; proved Theorem~\ref{thm:wzeta-specialvalues}; and established the additional interesting fact that \eqref{eq:wzeta-at-zero} implies, and is implied by, the summation identity \eqref{eq:bernoulli}. Furthermore, note that the evaluation $\wzeta(0)=\tfrac13$ is directly related to the fact that the identity \eqref{eq:bernoulli} does not hold for $n=0$. \qed

\bigskip \noindent
\textbf{Polynomial bound along vertical strips: proof of Theorem~\ref{thm:analytic-continuation}(4).}
To conclude our analysis of $\wzeta(s)$, it remains to prove part (4) of Theorem~\ref{thm:analytic-continuation}, namely the claim that \hbox{$|\wzeta(\sigma+it)|$} grows at most polynomially in $t$ as $|t|\to\infty$ with $\sigma$ ranging in a bounded interval $I=[a,b]$. This fact was mentioned briefly in \cite{matsumoto-bonner} (in the more general context of the multivariate Mordell-Tornheim zeta function) and is a fairly straightforward consequence of the representation \eqref{eq:wzeta-integral-rep2}: dividing that representation (with $M$ taken large enough so that the representation is valid for $\sigma\in I$) by $\Gamma(s)$ and using the standard relation $\Gamma(w+1)=w\Gamma(w)$, we get that

\begin{align}
\wzeta(s)
&= \frac{\Gamma(2s-1)\Gamma(1-s)\zeta(3s-1)}{\Gamma(s)}
\nonumber \\ & \qquad + \sum_{k=0}^{M-1} 
\frac{(-1)^{k}}{k!}s(s+1)\ldots (s+k-1)\zeta(2s+k)\zeta(s-k)
\nonumber \\ & \qquad  
+\frac{1}{2\pi i}\cint_{(M-1/2)} \frac{\Gamma(s+z)\Gamma(-z)  \zeta(2s+z)\zeta(s-z)}{\Gamma(s)}  \,dz.
\label{eq:wzeta-integral-rep3}
\end{align}
Here, all the summands except possibly the integral can be immediately seen using \eqref{eq:rateofgrowth-bound-gamma}--\eqref{eq:rateofgrowth-bound-zeta} to satisfy such a polynomial bound. For the integral, writing $s=\sigma+it$ and $z=x+iy$, we use \eqref{eq:rateofgrowth-bound-gamma}--\eqref{eq:rateofgrowth-bound-zeta} again to see that it can be bounded as
\begin{align}
\Bigg|\,\cint_{(M-1/2)} 
& \frac{\Gamma(s+z)\Gamma(-z)  \zeta(2s+z)\zeta(s-z)}{\Gamma(s)}  \,dz \Bigg|
\nonumber \\ &\le 
e^{\frac{\pi}{2} t} \int_{-\infty}^\infty e^{-\frac{\pi}{2}|t+y|}e^{-\frac{\pi}{2}|y|} \times \left|\textrm{Poly}(t,y)\right|\,dy,
\label{eq:polybound-convolution}
\end{align}
where $\operatorname{Poly}(t,y)$ denotes a polynomial factor (with powers and coefficients that are uniform as $\sigma$ ranges over compact intervals). However, a quick side calculation, which we omit, shows that for any integer $n\ge 0$ the function
$$
u\mapsto e^u \int_{-\infty}^\infty |v|^n e^{-|v|} e^{-|v+u|} \,dv
$$
is a polynomial in $u$. Applying this to the right-hand side of \eqref{eq:polybound-convolution} gives the desired polynomial bound, and finishes the proof of Theorem~\ref{thm:analytic-continuation}.\qed

\section{The formula for $\wzeta'(0)$: proof of Theorem~\ref{thm:wzeta-deriv0}}
\label{sec:wzeta2}

To prove the formula \eqref{eq:wzeta-deriv0} for $\wzeta'(0)$, start with the integral representation \eqref{eq:wzeta-integral-rep3}. In the case $M=2$, according to \eqref{eq:strip} the representation is valid when $-\tfrac14 < \re(s) < \tfrac52$. In particular, we have in a neighborhood of $s=0$ that
\begin{align}
\wzeta(s) &= \frac{\Gamma(2s-1)\Gamma(1-s)\zeta(3s-1)}{\Gamma(s)} + \zeta(s)\zeta(2s)
-s\zeta(s-1)\zeta(2s+1)
\nonumber
\\ & \qquad\qquad + \frac{1}{2\pi i}\cint_{(3/2)} \frac{\Gamma(-z)\Gamma(s+z)\zeta(2s+z)\zeta(s-z)}{\Gamma(s)}\,dz,
\label{eq:wzeta-rep-Mequals2}
\end{align}
where in the case of the removable singularity at $s=0$ the equality is understood in an appropriate limiting sense. Now inspect the Taylor expansion of each of the summands up to first order, using standard facts about the Taylor expansions of the gamma and zeta functions at $s=-1, 0, 1$. For the first summand, we have
\begin{align*}
& \frac{\Gamma(2s-1)\Gamma(1-s)\zeta(3s-1)}{\Gamma(s)} 
\\ &= 
\frac{(1+\gamma s +O(s^2))\left(-\frac{1}{2s}+(\gamma-1)+O(s)\right)\left(-\frac{1}{12}+3\zeta'(-1)s+O(s^2)\right)}{\frac{1}{s}-\gamma + O(s)}
\\ &=
\frac{1}{24} + \left( 
\frac{1}{12}-\frac32 \zeta'(-1)
\right)s + O(s^2).
\end{align*}
Similarly, for the second summand,
\begin{align*}
\zeta(s)\zeta(2s) &= \left(-\frac12-\frac12\log(2\pi)s+O(s^2)\right)
\left(-\frac12-\log(2\pi)s+O(s^2)\right)
\\ &=
\frac14 + \left(\frac34\log(2\pi)\right)s + O(s^2).
\end{align*}
and for the third summand,
\begin{align*}
-s\zeta(s-1)\zeta(2s+1) &= -s\left(\frac{1}{2s}+\gamma+O(s)\right)\left(-\frac{1}{12}+\zeta'(-1)s+O(s^2)\right)
\\ &= \frac{1}{24} + \left(\frac{1}{12}\gamma - \frac12 \zeta'(-1)\right)s + O(s^2).
\end{align*}
The coefficient of $s$ in the sum of the above three expansions is
\begin{equation} \label{eq:leadingupto}
\frac{1}{12}+\frac{1}{12}\gamma +\frac{3}{4} \log(2\pi) -2\zeta'(-1),
\end{equation}
which accounts for all the terms on the right-hand side of \eqref{eq:wzeta-deriv0} except for the integral. 

It remains to consider the integral in \eqref{eq:wzeta-rep-Mequals2}. Define 
$$
\Delta(s,z) = 
\frac{\Gamma(-z)\Gamma(s+z)\zeta(2s+z)\zeta(s-z)}{\Gamma(s)}.
$$
Differentiating $\Delta(s,z)$ with respect to $s$ and using the Taylor expansions 
\begin{align*}
\Gamma(s)&= \phantom{-}\frac{1}{s}-\gamma + O(s), \\[3pt]
\frac{\Gamma'(s)}{\Gamma(s)} &= -\frac{1}{s} + \gamma + O(s),
\end{align*}
a short computation, which we omit, shows that
$$
\frac{\partial}{\partial s}_{\raisebox{1pt}{\textrm{\large $|$}}s=0} \Delta(s,z) = 
\Gamma(z)\Gamma(-z)\zeta(z)\zeta(-z),
$$
which, using the functional equations $\Gamma(w+1)=w \Gamma(w)$ and $\Gamma(z)\Gamma(1-z)=\pi/\sin(\pi z)$, simplifies further to
$$
\frac{\partial}{\partial s}_{\raisebox{1pt}{\textrm{\large $|$}}s=0} \Delta(s,z)
= -\frac{\pi}{z\sin (\pi z)} \zeta(z)\zeta(-z).
$$
Thus, differentiating the integral in \eqref{eq:wzeta-rep-Mequals2} and setting $s=0$ gives 
\begin{align}
&\frac{\partial}{\partial s}_{\raisebox{1pt}{\textrm{\large $|$}}s=0} \left[
\frac{1}{2\pi i}\cint_{(3/2)} \frac{\Gamma(-z)\Gamma(s+z)\zeta(2s+z)\zeta(s-z)}{\Gamma(s)}\,dz
\right]
\nonumber \\[3pt] & 
=
-\frac1{2i} \cint_{(3/2)} \frac{\zeta(z)\zeta(-z)}{z \sin(\pi z)}\,dz
=
\frac12 \int_{-\infty}^\infty \frac{\zeta\left(\tfrac32+it\right)\zeta\left(-\tfrac32-it\right)}{\left(\tfrac32+it\right) \cosh(\pi t)}\,dt,
\label{eq:diff-ints0}
\end{align}
since $\sin\!\left(\pi\left(\tfrac32+it\right)\right)$ simplifies to $-\cosh(\pi t)$. Combining \eqref{eq:diff-ints0} 
with the computations leading up to \eqref{eq:leadingupto} finishes the proof of Theorem~\ref{thm:wzeta-deriv0}. \qed

\section{Asymptotics of generating functions for $SU(3)$ representations}

\label{sec:asym-genfuns}

In this section we begin applying our results on the function $\wzeta(s)$ to derive asymptotic results for the enumeration of representations of $SU(3)$. Let us start with a few general remarks about the broader context for our analysis. Theorem~\ref{thm:su3-asym} is formulated most naturally as an asymptotic enumeration result for representations of $SU(3)$, but one can think of the sequence $r(n)$ as enumerating a certain combinatorial class of integer partitions, namely the class of representations of an integer $n$ as a sum of integers of the form $a_{j,k}=\tfrac12 j k(j+k)$, where repetitions are allowed; the order of the summands is unimportant; and if $a_{j,k}=a_{j',k'}$ but $(j,k)\neq(j',k')$, the numbers $a_{j,k}$ and $a_{j',k'}$ are considered distinct. Thus, if one ignores the representation-theoretic aspect, our result broadly falls into the area of partition asymptotics. This topic, which had its beginnings in 1918 with the asymptotic expansion of Hardy and Ramanujan \cite{hardy-ramanujan} for the partition-counting function $p(n)$, has important connections to analytic number theory and the theory of modular forms (see \cite{apostol}). Highlights of the theory include Rademacher's refinement of Hardy and Ramanujan's formula into a convergent series for $p(n)$ \cite{rademacher} and Wright's asymptotic formula \cite{wright} for the number of plane partitions of $n$. See \cite{andrews-mocktheta}, \cite{barany},
\cite{bringmann-mahlburg},  \cite{holroyd-etal} for more recent results.

One landmark result in the theory was the theorem of Meinardus, who in his 1954 paper \cite{meinardus} identified a general strategy for proving asymptotic formulas for a wide class of combinatorial enumeration sequences $c(n)$ whose generating functions have an Euler product formula of the form
\begin{equation}
\label{eq:meinardus-genfun}
1+\sum_{n=1}^\infty c(n)x^n = \prod_{m=1}^\infty \frac{1}{(1-x^m)^{a_m}}
\end{equation}
for some sequence $(a_m)_{m=1}^\infty$ of nonnegative integers. Assuming certain technical conditions are satisfied, Meinardus's result states that $c(n)$ has an asymptotic formula of the form
\begin{equation} \label{eq:meinardus-asym} c(n) = (1+o(1))Cn^{-\gamma} \exp\left(D n^\delta\right)
\end{equation}
for certain explicitly computable constants $C, D, \gamma, \delta$. His result includes many earlier results (including \eqref{eq:hardy-ramanujan} and Wright's formula for plane partitions mentioned above) as special cases that can be deduced from it without much difficulty; see \cite[Chapter~6]{andrews} for examples and an accessible exposition of \nobreak{Meinardus's} result.

Now note that the generating function \eqref{eq:rofn-genfun} of our sequence $r(n)$ can in fact be written in the form \eqref{eq:meinardus-genfun}, with the exponents $a_m$ being given by $a_m = \#\{(j,k)\in\N^2\,:\,m=jk(j+k)/2\}$ (that is, the number of inequivalent $m$-dimensional irreducible representations of $SU(3)$). One might hope that Meinardus's theorem will apply in this case, but it turns out that it does not, and indeed, the asymptotic formula \eqref{eq:su3-asym} ends up having a more elaborate form than \eqref{eq:meinardus-asym}. Nonetheless, the general strategy employed by Meinardus and earlier authors for proving asymptotic formulas such as \eqref{eq:hardy-ramanujan} and \eqref{eq:meinardus-asym} (described for example in \cite[Section~VIII.6]{flajolet-sedgewick}) remains valid, and we were able to adapt it to our needs, although a few nontrivial technical hurdles need to be overcome. One conceptual innovation that simplifies the analysis somewhat is to use a probabilistic representation similar to Fristedt's probabilistic model for random integer partitions \cite{fristedt}.

The first step in the analysis, which we undertake in this section, is to understand the asymptotic behavior of the generating function of $r(n)$ and a few related functions, as the argument approaches a singularity point. The main tool we will use is the Mellin transform and its inverse. Define
\begin{align} 
G(x) &= \prod_{j,k=1}^\infty \frac{1}{1-x^{jk(j+k)/2}} = \sum_{n=0}^\infty r(n)x^n & (|x|<1),
\label{eq:Gofx-def} \\
f(t) &= \sum_{j,k=1}^\infty \exp\Big(-\tfrac12 jk(j+k) t\Big) & (t>0), \label{eq:foft-def} \\
h(t) &= \log G(e^{-t}), & (t>0).
\label{eq:hoft-def}
\end{align}
Note that the trivial bound 
$$\sum_{j,k=1}^\infty |x|^{jk(j+k)/2} \le \sum_{j,k=1}^\infty |x|^{(j+k)/2}
= \Bigg(\sum_{j=1}^\infty |x|^{j/2}\Bigg)^2
$$ 
shows that the product defining $G(x)$ (hence also the series) is absolutely convergent for complex $x$ satisfying $|x|<1$ and defines an analytic function, and that $f(t), h(t)$ are defined and finite for $t>0$. We will also need a few additional easy bounds that are given in the following lemma.

\begin{lem} 
\label{lem:hoft-asymbounds}
The functions $f(t)$ and $h(t)$ satisfy the asymptotic bounds
$$
f(t) = \begin{cases}
O(t^{-2/3}) & \textrm{as }t\searrow0, \\
O(e^{-t}) & \textrm{as }t\to \infty,
\end{cases}
\qquad
h(t) = \begin{cases}
O(t^{-2/3}) & \textrm{as }t\searrow0, \\
O(e^{-t}) & \textrm{as }t\to \infty.
\end{cases}
$$
\end{lem}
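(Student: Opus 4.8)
The plan is to bound $f$ directly and then deduce the bounds on $h$ via the elementary identity
\[
h(t) = -\sum_{j,k=1}^\infty \log\!\left(1 - e^{-jk(j+k)t/2}\right) = \sum_{l=1}^\infty \frac{1}{l}\, f(lt),
\]
which follows by expanding each factor $(1-x^m)^{-1}$ in the product \eqref{eq:Gofx-def} into a logarithmic series. It is convenient to note at the outset that $f$ and $h$ are positive and, being sums of decreasing functions of $t$, monotone decreasing; hence in all four claims only an upper bound is needed, and any bound on $f$ valid near $0$ (resp.\ near $\infty$) automatically extends to a bound $f(u)\le C_1 u^{-2/3}$ on $(0,1]$ (resp.\ $f(u)\le C_2 e^{-u}$ on $[1,\infty)$) for suitable constants, since $f$ is continuous and decreasing.

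The two bounds as $t\to\infty$ are easy. Since $jk(j+k)/2\ge 1$ for all $j,k\ge1$, with the minimum attained only at $(1,1)$, I would write $e^t f(t)=\sum_{j,k}e^{-(jk(j+k)/2-1)t}$ and note that for $t\ge1$ each summand is at most its value at $t=1$, so $e^t f(t)\le e\,f(1)<\infty$, giving $f(t)=O(e^{-t})$. For $h$, for $t\ge1$ every term $e^{-jk(j+k)t/2}\le e^{-t}\le e^{-1}<\tfrac12$, and since $-\log(1-x)\le 2x$ on $[0,\tfrac12]$ one gets $h(t)\le 2f(t)=O(e^{-t})$.

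The bound on $f$ as $t\searrow 0$ is the crux, and this is where the exponent $2/3$ has to emerge from the pole of $\wzeta(s)$ at $s=2/3$ (Theorem~\ref{thm:analytic-continuation}). I see two routes. The analytic one: the Mellin transform of $f$ is $\mellin{f}(s)=\Gamma(s)2^s\wzeta(s)$ for $\re(s)>2/3$, so Mellin inversion followed by a contour shift that crosses the simple pole at $s=2/3$ writes $f(t)$ as a residue term of exact size $t^{-2/3}$ plus an integral over a line $\re(s)=c'$ with $\tfrac12<c'<\tfrac23$, which is $O(t^{-c'})$ by the exponential decay \eqref{eq:rateofgrowth-bound-gamma} of $\Gamma$ and the polynomial growth of $\wzeta$ (Theorem~\ref{thm:analytic-continuation}(4)) and $\zeta$ (eq.~\eqref{eq:rateofgrowth-bound-zeta}) along verticals. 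The elementary one: writing $f(t)=\sum_{n\ge1}a_n e^{-nt}$ with $a_n=\#\{(j,k)\in\N^2: jk(j+k)/2=n\}$, I would first prove the counting estimate $A_N:=\sum_{n\le N}a_n=\#\{(j,k): jk(j+k)\le 2N\}=O(N^{2/3})$ --- by symmetry it suffices to count pairs with $j\le k$, and there $jk^2\le jk(j+k)\le M$ forces $k\le\sqrt{M/j}$ and $j\le M^{1/3}$, so the count is at most $\sum_{j\le M^{1/3}}\sqrt{M/j}=O(M^{2/3})$ --- and then apply Abel summation: $f(t)=(1-e^{-t})\sum_{n\ge1}A_n e^{-nt}\le (1-e^{-t})\cdot O\!\left(\sum_{n\ge1}n^{2/3}e^{-nt}\right)=O(t)\cdot O(t^{-5/3})=O(t^{-2/3})$, the inner sum being $O(t^{-5/3})$ by comparison with $\int_0^\infty x^{2/3}e^{-xt}\,dx$ (up to a lower-order correction from the unimodal peak).

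For the last claim, $h(t)=O(t^{-2/3})$ as $t\searrow 0$, I would feed the two $f$-bounds into $h(t)=\sum_{l\ge1}\tfrac1l f(lt)$, splitting the sum at $l\approx 1/t$: the terms with $l\le 1/t$ (so $lt\le1$) contribute at most $C_1 t^{-2/3}\sum_{l\ge1}l^{-5/3}=O(t^{-2/3})$, and the terms with $l>1/t$ contribute, using $f(lt)\le C_2 e^{-lt}$ and $\tfrac1l<t$, at most $C_2\,t\sum_{l>1/t}e^{-lt}=O(1)$; equivalently, $\mellin{h}(s)=\Gamma(s)\zeta(s+1)2^s\wzeta(s)$ and the same contour shift past $s=2/3$ applies. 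The main obstacle is precisely this $t\searrow 0$ estimate for $f$: naive bounds such as $jk(j+k)\ge 2(jk)^{3/2}$ together with $d(m)=O(m^\epsilon)$ give only $O(t^{-2/3-\epsilon})$, and even a sharpened version of them leaves a spurious factor $\log(1/t)$, so obtaining the exact exponent $2/3$ requires either the clean logarithm-free count $A_N=O(N^{2/3})$ above or, equivalently, the contour shift that genuinely crosses the pole of $\wzeta$ at $s=2/3$.
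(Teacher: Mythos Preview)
Your argument is correct, but the route you take to the key bound $f(t)=O(t^{-2/3})$ is genuinely different from the paper's. The paper proves this by a direct Riemann-sum comparison: writing
\[
f(t)=t^{-2/3}\sum_{j,k\ge1}\exp\!\Big(-\tfrac12(t^{1/3}j)(t^{1/3}k)(t^{1/3}j+t^{1/3}k)\Big)\,t^{1/3}t^{1/3},
\]
the double sum is recognised as a Riemann sum (mesh $t^{1/3}$) for the finite integral $I=\iint_{(0,\infty)^2}\exp(-\tfrac12 xy(x+y))\,dx\,dy$, whose finiteness is checked by an explicit change of variables. This gives not just $f(t)=O(t^{-2/3})$ but the sharper $f(t)=(I+o(1))t^{-2/3}$. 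Your counting-plus-Abel argument is a clean alternative that yields exactly the $O(t^{-2/3})$ needed and has the virtue of being purely arithmetic; the Riemann-sum approach, on the other hand, already rehearses the scaling heuristic that drives the later analysis and pins down the leading constant for free. Your Mellin route also works, but note that in the paper's logical flow this lemma is invoked precisely to justify that the Mellin transform of $f$ converges on $\re(s)>2/3$, so appealing to Mellin inversion here, while not strictly circular (convergence follows independently from Fubini for positive terms), runs against the intended order of exposition.

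For $h(t)=O(t^{-2/3})$ both arguments use the harmonic-sum identity $h(t)=\sum_{l\ge1}l^{-1}f(lt)$. Your split at $l\approx 1/t$ is correct but unnecessary: as the paper observes, the bound $f(u)\le C u^{-2/3}$ holds for \emph{all} $u>0$ (since $e^{-u}$ decays faster than $u^{-2/3}$), so one may simply sum $\sum_l l^{-1}(lt)^{-2/3}=\zeta(5/3)\,t^{-2/3}$ without any splitting.
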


\begin{proof} Consider first $f(t)$; the bound $f(t)=O(e^{-t})$ as $t\to\infty$ is trivial and left as an exercise. The behavior of $f(t)$ as $t\searrow 0$ can be understood by writing
$$
f(t) = t^{-2/3} \sum_{j,k=1}^\infty \exp\left(-\tfrac12 (t^{1/3}j)(t^{1/3}k)(t^{1/3}j+t^{1/3}k\right) \, t^{1/3} \, t^{1/3},
$$
and noting that this is $t^{-2/3}$ times a Riemann sum (with $\Delta x = \Delta y = t^{1/3}$) for the double integral
$I=\int_0^\infty \int_0^\infty \exp\left(-\tfrac12 xy(x+y)\right)dx\,dy$. This integral can be seen to be finite by making the two-dimensional change of variables $u=x, v=\tfrac12xy(x+y)$, which, after a short computation that we omit, gives
$$
I= 2\int_0^\infty \int_0^\infty e^{-v}\frac{1}{\sqrt{u^4+8vu}}\,du\,dv.
$$
Making another substitution, namely $w=(8v)^{-1/3}u$ for the integral with respect to $u$, transforms this into
\begin{align*}
2\int_0^\infty  e^{-v} &\left(\int_0^\infty \frac{(8v)^{1/3}  dw}{\sqrt{(8v)^{4/3}w^4+(8v)^{4/3}w}}\right)\,dv
\\ &= 2\times 8^{-1/3} \int_0^\infty  v^{-1/3} e^{-v} dv\cdot \int_0^\infty \frac{dw}{\sqrt{w^4+w}} < \infty,
\end{align*}
proving therefore that $f(t)=O(t^{-2/3})$ (in fact $f(t)=(I+o(1))t^{-2/3}$) as $t\searrow 0$. Finally, for $h(t)$, note that
\begin{align}
h(t) &= -\sum_{j,k=1}^\infty \log \Big(1-\exp\left(-\tfrac12 j k (j+k)t\right)\Big)
\nonumber 
\\ &= \sum_{j,k=1}^\infty \sum_{m=1}^\infty \frac{1}{m}\exp\left(-\tfrac12 j k (j+k)mt\right)
\nonumber 
\\ &= \sum_{m=1}^\infty \frac{1}{m} \left(
\sum_{j,k=1}^\infty \exp\left(-\tfrac12 j k (j+k)mt\right)
\right)
= \sum_{m=1}^\infty \frac{1}{m} f(mt).
\label{eq:hoft-harmonic-sum}
\end{align}
The claim about the behavior $h(t)$ as $t\to\infty$ is again trivial, and for $t$ near $0$ we have
\begin{align*} 
|h(t)| &\le \sum_{m=1}^\infty \frac{1}{m} |f(mt)|
= O\left(\sum_{m=1}^\infty \frac{1}{m} (mt)^{-2/3} \right)
\\ &= O\left(\sum_{m=1}^\infty \frac{1}{m^{5/3}}\cdot t^{-2/3} \right) = O(t^{-2/3}).
\end{align*}
(Note that the $O(t^{-2/3})$ bound for $f(t)$ actually holds for all $t>0$, since $e^{-t}$ decays faster than $t^{-2/3}$ as $t\to\infty$.)
\end{proof}

We are now ready to prove a much stronger result about the asymptotic behavior of $h(t)$ and its derivatives as $t\searrow 0$.

\begin{thm}
\label{thm:hoft-asym}
As $t \searrow 0$, the function $h(t)$ has the asymptotic expansion
\begin{equation}
\label{eq:hoft-asym}
h(t) = \mu_1 t^{-2/3} + \mu_2 t^{-1/2} - \tfrac13 \log t + \wzeta'(0)+\tfrac13 \log 2 + O(t^{1/2}), 
\end{equation}
where $\mu_1, \mu_2$ are constants defined by
\begin{align*}
\mu_1 &= \frac{2^{2/3}}{3}\Gamma\left(\tfrac13\right)^2 \zeta\left(\tfrac53\right), \\
\mu_2 &= \sqrt{2 \pi} \zeta\left(\tfrac12\right) \zeta\left(\tfrac32\right).
\end{align*}
Furthermore, the expansion \eqref{eq:hoft-asym} can be differentiated termwise; specifically, the first two derivatives $h'(t)$ and $h''(t)$ have the asymptotic expansions
\begin{align}
\label{eq:hoft-asym-deriv}
h'(t) &= -\tfrac23 \mu_1 t^{-5/3}\  -\tfrac12 \mu_2 t^{-3/2} - \tfrac13 t^{-1}+ O(t^{-1/2}),
\\
h''(t) &= \phantom{-}\tfrac{10}{9} \mu_1 t^{-8/3} +\tfrac34 \mu_2 t^{-5/2} + \tfrac13 t^{-2} + O(t^{-3/2}).
\label{eq:hoft-asym-deriv2}
\end{align}
\end{thm}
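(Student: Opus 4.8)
The plan is to carry out the Mellin transform analysis announced at the start of Section~\ref{sec:asym-genfuns}. First I would compute the Mellin transform of $f$: integrating the series \eqref{eq:foft-def} term by term against $t^{s-1}$ gives $\int_0^\infty f(t)\,t^{s-1}\,dt = \sum_{j,k\ge1}\Gamma(s)\bigl(\tfrac12 jk(j+k)\bigr)^{-s} = 2^s\,\Gamma(s)\,\wzeta(s)$, valid for $\re(s)>\tfrac23$, so that by Mellin inversion $f(t) = \frac{1}{2\pi i}\cint_{(c)}2^s\Gamma(s)\wzeta(s)t^{-s}\,ds$ for any $c>\tfrac23$. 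Substituting this into the harmonic-sum identity \eqref{eq:hoft-harmonic-sum} and carrying out $\sum_{m\ge1}m^{-1}(mt)^{-s} = t^{-s}\zeta(s+1)$ under the integral sign---an interchange justified by absolute convergence, using the exponential decay of $\Gamma$ on vertical lines \eqref{eq:rateofgrowth-bound-gamma} together with Lemma~\ref{lem:hoft-asymbounds}---yields the Mellin--Barnes representation $h(t) = \frac{1}{2\pi i}\cint_{(c)}2^s\,\Gamma(s)\,\wzeta(s)\,\zeta(s+1)\,t^{-s}\,ds$, valid for $c>\tfrac23$.

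Next I would push the contour leftward to the line $\re(s)=-\tfrac34$, picking up the residues at the poles crossed. By Theorem~\ref{thm:eofs-asym}, $\Gamma(s)\wzeta(s)$ has only simple poles at $s=\tfrac23$, $s=0$, and $s=\tfrac12-k$ ($k\ge0$), with the residues listed there, and the extra factor $\zeta(s+1)$ contributes one further simple pole at $s=0$; hence in the strip $-\tfrac34<\re(s)<c$ the integrand has simple poles at $s=\tfrac23$, $s=\tfrac12$, $s=-\tfrac12$ and a double pole at $s=0$. The residues at $s=\tfrac23$ and $s=\tfrac12$ reproduce $\mu_1 t^{-2/3}$ and $\mu_2 t^{-1/2}$ exactly (using $\mu_1 = \tfrac{2^{2/3}}{3}\Gamma(\tfrac13)^2\zeta(\tfrac53)$ and $\mu_2 = \sqrt{2\pi}\,\zeta(\tfrac12)\zeta(\tfrac32)$), and the residue at $s=-\tfrac12$ is a term of size $O(t^{1/2})$. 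For the double pole at $s=0$ I would combine the expansions $2^s t^{-s} = 1 + s\log(2/t)+O(s^2)$, $\Gamma(s)\wzeta(s) = \tfrac{1/3}{s}+\bigl(\wzeta'(0)-\tfrac\gamma3\bigr)+O(s)$ (using $\wzeta(0)=\tfrac13$), and $\zeta(s+1) = \tfrac1s+\gamma+O(s)$; on multiplying these out the $\gamma$-terms cancel and the coefficient of $s^{-1}$ works out to $-\tfrac13\log t+\wzeta'(0)+\tfrac13\log 2$, which is exactly the constant-plus-logarithm part of \eqref{eq:hoft-asym}.

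The remaining integral over $\re(s)=-\tfrac34$ is then $O(t^{3/4})$: the factor $t^{-s}$ supplies $t^{3/4}$ in modulus, while $2^s\Gamma(s)\wzeta(s)\zeta(s+1)$ is absolutely integrable along that line, since $\Gamma$ decays exponentially \eqref{eq:rateofgrowth-bound-gamma}, $\zeta(s+1)$ grows at most polynomially \eqref{eq:rateofgrowth-bound-zeta}, and $\wzeta(s)$ grows at most polynomially by Theorem~\ref{thm:analytic-continuation}(4). Since $t^{3/4}=o(t^{1/2})$ as $t\searrow0$, combining this bound with the residue computation establishes \eqref{eq:hoft-asym}.

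For the derivatives I would differentiate the Mellin--Barnes representation under the integral sign---legitimate because its integrand decays exponentially in $\lvert\im(s)\rvert$ uniformly for $t$ in compact subsets of $(0,\infty)$---obtaining, for $\ell\in\{1,2\}$, the identity $h^{(\ell)}(t) = \frac{(-1)^\ell}{2\pi i}\cint_{(c)}2^s\,s(s+1)\cdots(s+\ell-1)\,\Gamma(s)\wzeta(s)\zeta(s+1)\,t^{-s-\ell}\,ds$, and then repeating the contour shift to $\re(s)=-\tfrac34$. The one structural change is that for $\ell\ge1$ the inserted factor $s$ cancels the pole of $\Gamma(s)$ at $s=0$, leaving only a simple pole there, with residue $-\tfrac13 t^{-1}$ when $\ell=1$ and $\tfrac13 t^{-2}$ when $\ell=2$ and no logarithm, while at $s=\tfrac23$ and $s=\tfrac12$ the residues acquire the extra numerical factors $-\tfrac23,-\tfrac12$ (resp.\ $\tfrac{10}{9},\tfrac34$); the error terms $O(t^{-1/2})$ and $O(t^{-3/2})$ come from $t^{-s-\ell}$ on $\re(s)=-\tfrac34$ together with the residue at $s=-\tfrac12$, which is precisely what formally differentiating the $O(t^{1/2})$ term of \eqref{eq:hoft-asym} predicts. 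There is no conceptual obstacle in any of this; the points requiring genuine care are the Laurent-expansion bookkeeping at the double pole $s=0$---where any slip in the constant term of $\Gamma(s)\wzeta(s)$ would corrupt both the coefficient of $\log t$ and the $\wzeta'(0)$ term---and the justification of the three interchanges of limiting operations, for which the polynomial bound of Theorem~\ref{thm:analytic-continuation}(4) and the $\Gamma$-decay estimate \eqref{eq:rateofgrowth-bound-gamma} are exactly the tools needed.
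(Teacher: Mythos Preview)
Your proposal is correct and follows essentially the same approach as the paper: compute the Mellin transform of $h$ via the harmonic-sum identity, invert, shift the contour leftward and read off the residues at $s=\tfrac23,\tfrac12,0,-\tfrac12$, then differentiate the resulting representation for $h'$ and $h''$. The only inessential differences are that the paper shifts to the line $\re(s)=-1$ rather than your $-\tfrac34$, and that the paper obtains the derivative expansions simply by differentiating the post-shift identity \eqref{eq:hoft-invmellin2} termwise rather than by rewriting $h^{(\ell)}$ as a new Mellin--Barnes integral with an inserted polynomial factor; both routes amount to the same calculation.
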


\begin{proof}
First, we wish to show that $f(t)$, and therefore also $h(t)$, can be related to the analytic function $\wzeta(s)$ via the Mellin transform. Start by considering $f(t)$. Computing its Mellin transform, which we denote by $\mellin{f(t)}(s)$, we have that
\begin{align}
\nonumber \mellin{f(t)}(s) &:= \int_0^\infty f(t) t^{s-1}\,dt = \sum_{j,k=1}^\infty \int_0^\infty \exp\Big(-\tfrac12 jk(j+k) t\Big) t^{s-1}\,dt 
\\ &\phantom{:}= \sum_{j,k=1}^\infty 
\left(\tfrac12jk(j+k)\right)^{-s} \,\Gamma(s) = 2^s \Gamma(s)\wzeta(s). 
\label{eq:foft-mellin}
\end{align}
Next, consider $h(t)$. 
Taking the Mellin transform of \eqref{eq:hoft-harmonic-sum}, we obtain easily, using the standard linearity and scaling properties of the Mellin transforms, that
\begin{align}
\nonumber \mellin{h(t)}(s) &= \sum_{m=1}^\infty \frac{1}{m} \mellin{f(mt)}(s) = 
\sum_{m=1}^\infty \frac{1}{m} m^{-s} \mellin{f(t)}(s) \\ &= 
2^s \Gamma(s)\wzeta(s)\zeta(s+1).
\label{eq:hoft-mellin}
\end{align}
By Lemma~\ref{lem:hoft-asymbounds}, both \eqref{eq:foft-mellin} and \eqref{eq:hoft-mellin} are valid for all complex $s$ satisfying $\re(s)>2/3$. (This is important for the proof below but also provides a useful consistency check with Theorem~\ref{thm:analytic-continuation}(1).)

We are now in a good position to apply the method of Mellin transform asymptotics, described for example in \cite[Appendix~B.7]{flajolet-sedgewick}. The main idea is that the asymptotic behavior of $h(t)$ as $t\searrow 0$ is closely tied to the singularity structure in the complex plane of its Mellin transform. Specifically, we apply the Mellin inversion formula \cite[eq.~(3.1.5), p.~80]{paris-kaminski} to deduce that $h(t)$ has the contour integral representation
\begin{equation} \label{eq:hoft-invmellin}
h(t) = \frac{1}{2\pi i} \cint_{(\alpha)} 2^s \Gamma(s)\wzeta(s)\zeta(s+1) t^{-s}\,ds,
\end{equation}
where $\alpha>2/3$, and $(\alpha)$ denotes as before the vertical contour from $\alpha-i \infty$ to $\alpha+i \infty$.
Shifting the contour of integration to the left past a few of the poles of the integrand will now produce the main asymptotic contributions from the residues, and the integral will become an error term whose magnitude is easily controlled. The relevant poles past which we will move the contour are at $s=2/3, 1/2, 0, -1/2$. The first two are simple poles, with respective residues
\begin{align*}
\operatorname{Res}_{s=2/3} \Big[
2^s \Gamma(s)\wzeta(s)\zeta(s+1) t^{-s}\Big] = \frac{2^{2/3}}{3}\Gamma\left(\tfrac13\right)^2\zeta\left(\tfrac53\right)t^{-s} = \mu_1 t^{-s}, \\
\operatorname{Res}_{s=1/2} \Big[
2^s \Gamma(s)\wzeta(s)\zeta(s+1) t^{-s}\Big] = \sqrt{2\pi}\zeta\left(\tfrac12\right)\zeta\left(\tfrac32\right)t^{-s} = \mu_2 t^{-s}.
\end{align*}
The pole at $s=0$ is a double pole since each of the factors $\Gamma(s)$ and $\zeta(s+1)$ contributes a first-order singularity. By the well-known facts that $\Gamma(s)=1/s-\gamma+O(s)$ and $\zeta(s+1)=1/s+\gamma+O(s)$ near $s=0$ (where $\gamma$ is the Euler-Mascheroni constant), the Laurent expansion of $2^s \Gamma(s)\wzeta(s)\zeta(s+1) t^{-s}$ around $s=0$ is easily computed as
\begin{align*}
\Gamma(s)\wzeta(s)\zeta(s+1) (t/2)^{-s} &= \left(\frac{1}{s}-\gamma + O(s)\right)
\left(\frac{1}{s}+\gamma+O(s)\right)
\\ & \ \quad \times\left(\tfrac13 + \wzeta'(0)s + O(s^2) \right)(1 - s\log (t/2) + O(s^2)) 
\\ & = \frac{1}{3s^2} +\left(\wzeta'(0)+\tfrac13\log 2 - \tfrac13 \log t\right) \frac{1}{s} + O(1).
\end{align*}
Thus, the residue at $s=0$ is $\wzeta'(0)+\tfrac13\log 2 - \tfrac13 \log t$. Finally, the residue at $s=-1/2$ is $\big(\tfrac14 \sqrt{\pi/2} \zeta(-5/2) \zeta(1/2)\big) t^{1/2} =: \nu t^{1/2}$.

Now shift the contour in \eqref{eq:hoft-invmellin} past the four poles, say to $(\alpha')$ where $\alpha'=-1$. By the residue theorem (with easy justification provided by \eqref{eq:rateofgrowth-bound-gamma}, \eqref{eq:rateofgrowth-bound-zeta} and \eqref{eq:wzeta-polybound}), we get that
\begin{align}
h(t) &= \mu_1 t^{-2/3} + \mu_2 t^{-1/2} - \tfrac13 \log t + \wzeta'(0)+\tfrac13 \log 2 + \nu t^{1/2} 
\nonumber \\ & \qquad + \frac{1}{2\pi i} \cint_{(-1)} 2^s \Gamma(s) \wzeta(s) \zeta(s+1) t^{-s}\,ds.
\label{eq:hoft-invmellin2}
\end{align}
It is easy to see that the integral is $O(t^{-1})$, so this proves \eqref{eq:hoft-asym}. Finally, \eqref{eq:hoft-asym-deriv}--\eqref{eq:hoft-asym-deriv2} follow by differentiating both sides of \eqref{eq:hoft-invmellin2}.
\end{proof}

\section{A probabilistic model for representations of $SU(3)$}

Next, we take advantage of the product structure of the generating function \eqref{eq:rofn-genfun} to interpret the coefficients $r(n)$ in terms of a probabilistic model involving a family of independent random variables distributed according to the geometric distribution. Such probabilistic models are widely applied to the asymptotic enumeration of combinatorial structures and to the analysis of their probabilistic properties (see \cite{arratia-tavare-barbour}). A version specifically tailored to the study of integer partitions was introduced by Fristedt \cite{fristedt} (and is sometimes referred to as Fristedt's conditioning device) and forms the prototype for a variety of similar models, considered, e.g., in \cite{vershik}.

Let $t>0$ denote a parameter. For integers $j,k\ge1$, let $X_{j,k}$ denote a random variable defined on some probability space having the distribution $X \sim \operatorname{Geom}_0(1-e^{-jk(j+k)t/2})$, namely the geometric distribution (in the version that starts at $0$, hence the notation $\operatorname{Geom}_0(\cdot))$ with parameter $p_{j,k}=
1-e^{-jk(j+k)t/2}$. That is, we have
$$ \prob_t( X_{j,k} = m ) = p_{j,k}(1-p_{j,k})^m = (1-e^{-jk(j+k)t/2}) e^{-jk(j+k)mt/2} $$
for $m\ge 0$,
where the notation $\prob_t(\cdot)$ denotes the probability with respect to the parameter value~$t$. We assume that all the variables $(X_{j,k})_{j,k\ge1}$ are simultaneously defined on the same probability space and form an independent family of random variables.

We interpret the variables $(X_{j,k})_{j,k\ge0}$ as encoding the structure of a certain random representation of $SU(3)$, namely
\begin{equation} \label{eq:random-rep} W = \bigoplus_{j,k=1}^\infty X_{j,k} W_{j,k}
\end{equation}
which consists of a sum of $X_{j,k}$ copies of each of the irreducible representations $W_{j,k}$ (see Subsection~\ref{subsec:asym-rep}). The total dimension of $W$ is therefore the random variable, which we denote $N$, given by
\begin{equation} \label{eq:def-N}
N = \sum_{j,k=1}^\infty \tfrac12 jk(j+k) X_{j,k}.
\end{equation}
It is easily seen from the Borel-Cantelli lemma from probability theory that with probability $1$ only finitely many terms in the sum are nonzero, and therefore that $N$ is almost surely finite.

If $(m_{j,k})_{j,k=1}^\infty$ is an array of (nonrandom) nonnegative integers, we have
\begin{align}
\nonumber
\prob_t&\left( \bigcap_{j,k=1}^\infty \left\{ X_{j,k}=m_{j,k} \right\} \right) = \prod_{j,k=1}^\infty \left[ \left(1-e^{-jk(j+k)t/2}\right)e^{-jk(j+k)m_{j,k}t/2} \right]
\\ &= \frac{e^{-n t}}{G(e^{-t})}, 
\label{eq:prob-specific-rep}
\end{align}
where $G(x)$ is defined in \eqref{eq:Gofx-def}, and $n=\sum_{j,k=1}^\infty \tfrac12 jk(j+k) m_{j,k}$. This can be interpreted as the probability in our random model of the event that the random representation $W$ in \eqref{eq:random-rep} is equal to the specific nonrandom representation $\oplus_{j,k=1}^\infty m_{j,k}W_{j,k}$, which has dimension $n$. Note that representations with the same dimension have equal probabilities of being observed under the measure $\prob_t(\cdot)$. Summing the probabilities \eqref{eq:prob-specific-rep} over all $r(n)$ representations of dimension $n$ therefore gives that
\begin{equation}
\label{eq:prob-dim-n}
\prob_t(N=n) = \frac{e^{-nt}r(n)}{G(e^{-t})}.
\end{equation}
This key formula will be of crucial significance in our analysis; it relates our integer sequence $r(n)$ to the distribution of the random variable $N$, and will therefore allow us to translate results about that distribution into information about $r(n)$.

Let $\expec_t\left(\cdot\right)$ and $\var_t(\cdot)$ denote the expectation and variance, respectively, relative to the parameter value $t$. The following lemma provides another connection of the random variable $N$ to a function we have been studying.

\begin{lem}
\label{lem:formulas-expecvar}
The expectation and variance of $N$ are given, respectively, by
\begin{align*}
\expec_t(N) &= -h'(t), \\
\var_t(N) &= \phantom{-}h''(t).
\end{align*}
\end{lem}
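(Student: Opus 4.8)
The plan is to compute the moment generating function (or equivalently the cumulant structure) of $N$ directly from the independence of the $X_{j,k}$, and then recognize the resulting sums as $h'(t)$ and $h''(t)$. First I would recall the elementary facts about a random variable $X\sim\operatorname{Geom}_0(p)$: its expectation is $(1-p)/p$ and its variance is $(1-p)/p^2$. Applying this with $p=p_{j,k}=1-e^{-jk(j+k)t/2}$, and writing $q_{j,k}=e^{-jk(j+k)t/2}$ so that $1-p_{j,k}=q_{j,k}$, gives
\begin{align*}
\expec_t(X_{j,k}) &= \frac{q_{j,k}}{1-q_{j,k}}, \\
\var_t(X_{j,k}) &= \frac{q_{j,k}}{(1-q_{j,k})^2}.
\end{align*}
By \eqref{eq:def-N}, $N=\sum_{j,k}\tfrac12 jk(j+k)X_{j,k}$, so by linearity of expectation $\expec_t(N)=\sum_{j,k}\tfrac12 jk(j+k)\,q_{j,k}/(1-q_{j,k})$, and by independence (which makes the variance additive) $\var_t(N)=\sum_{j,k}\big(\tfrac12 jk(j+k)\big)^2 q_{j,k}/(1-q_{j,k})^2$. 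The interchange of expectation/variance with the infinite sum needs the almost-sure finiteness of $N$ together with monotone convergence (all terms are nonnegative), which was already noted via Borel–Cantelli just above the lemma, so this is routine.

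Next I would differentiate $h(t)=\log G(e^{-t})=-\sum_{j,k}\log\!\big(1-e^{-jk(j+k)t/2}\big)$, as given in \eqref{eq:hoft-def} and \eqref{eq:Gofx-def}. Writing $a_{j,k}=\tfrac12 jk(j+k)$, we have $h(t)=-\sum_{j,k}\log(1-e^{-a_{j,k}t})$, so term-by-term differentiation gives
\begin{align*}
h'(t) &= -\sum_{j,k} \frac{a_{j,k}\,e^{-a_{j,k}t}}{1-e^{-a_{j,k}t}}, \\
h''(t) &= \phantom{-}\sum_{j,k} \frac{a_{j,k}^2\,e^{-a_{j,k}t}}{\big(1-e^{-a_{j,k}t}\big)^2}.
\end{align*}
(For the second derivative one uses $\frac{d}{dt}\frac{e^{-at}}{1-e^{-at}}=-\frac{a e^{-at}}{(1-e^{-at})^2}$.) Comparing with the formulas for $\expec_t(N)$ and $\var_t(N)$ above, with $q_{j,k}=e^{-a_{j,k}t}$, we see immediately that $\expec_t(N)=-h'(t)$ and $\var_t(N)=h''(t)$, which is the claim.

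The only point requiring any care — and thus the ``main obstacle,'' though it is a mild one — is justifying the termwise differentiation of the infinite product/sum defining $h$, and simultaneously the exchange of the sum with expectation and variance. Both follow from uniform convergence on compact subsets of $(0,\infty)$: for $t$ in a compact interval $[t_0,t_1]\subset(0,\infty)$ the terms $a_{j,k}e^{-a_{j,k}t}$ and $a_{j,k}^2 e^{-a_{j,k}t}$ are dominated by a convergent series (since $a_{j,k}\ge (j+k)/2$ and the sum of $a^2 e^{-a t_0}$ over such growing $a_{j,k}$ converges, as already used implicitly in the discussion around \eqref{eq:foft-def} and Lemma~\ref{lem:hoft-asymbounds}). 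I would state this domination once and invoke it for all three interchanges. No deeper input is needed; the lemma is essentially bookkeeping connecting the probabilistic model of Section~6 to the analytic function $h(t)$ of Section~5.
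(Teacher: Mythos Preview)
Your argument is correct, but it takes a different route from the paper's. The paper works at the level of the distribution of $N$ itself: writing $g(t)=G(e^{-t})=\sum_{n\ge0} r(n)e^{-nt}$, it observes via \eqref{eq:prob-dim-n} that $h'(t)=g'(t)/g(t)=-\sum_n n\,\prob_t(N=n)=-\expec_t(N)$, and similarly $h''(t)=g''(t)/g(t)-(g'(t)/g(t))^2=\expec_t(N^2)-\expec_t(N)^2=\var_t(N)$. In other words, the paper recognizes $h$ as the cumulant generating function of $N$ directly from the power-series side, without ever unpacking the product structure or the individual geometric laws. Your approach instead exploits the independence of the $X_{j,k}$ and the explicit mean and variance of a $\operatorname{Geom}_0$ variable, then matches against the termwise derivatives of the product defining $h$. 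Your version makes the role of independence explicit and yields the same formulas that the paper later writes out anyway (cf.\ the expansion in \eqref{eq:log-charfun}); the paper's version is a touch slicker in that the only analytic justification needed is termwise differentiation of a convergent power series in $e^{-t}$, rather than the uniform-convergence argument you sketch for the double sum over $j,k$.
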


\begin{proof}
Denote $g(t)=G(e^{-t})=\sum_{n=0}^\infty r(n) e^{-nt}$, so that
\begin{align*}
g'(t) = -\sum_{n=0}^\infty n r(n) e^{-nt}, \qquad
g''(t) = \sum_{n=0}^\infty n^2 r(n) e^{-nt}.
\end{align*}
Using \eqref{eq:prob-dim-n}, it follows that
$$
h'(t) = (\log g)'(t) = \frac{g'(t)}{g(t)} = -\sum_{n=0}^\infty n \prob_t(N=n) = -\expec_t(N),
$$
and similarly,
\begin{align*}
h''(t) &= \frac{g''(t)g(t)-g'(t)^2}{g(t)^2}=
\frac{g''(t)}{g(t)}-\left(\frac{g'(t)}{g(t)}\right)^2
\hspace{120pt}
\\ &= \sum_{n=0}^\infty n^2 \prob_t(N=n) - (-\expec_t(N))^2 = \expec_t(N^2)-(\expec_t(N))^2=\var_t(N). 
\qedhere
\end{align*}
\end{proof}

\section{Saddle point analysis}

\label{sec:saddle-point}

Our next step is to perform a saddle point analysis of the generating function $G(x)$. This can be described in two equivalent languages. One standard description of the method commonly found in analysis textbooks is in terms of the complex-analytic idea that the coefficients $r(n)$ may be extracted from $G(x)$ as contour integrals, namely as
\begin{equation}
\label{eq:saddlepoint-contourint}
r(n) = \frac{1}{2\pi i}\oint\limits_{|z|=r_n} \frac{G(z)}{z^{n+1}}\,dz.
\end{equation}
The idea is then to choose the radius $r_n$ of the contour of integration in such a way that the bulk of the contribution to the integral comes from the environment of the point $z=r_n$ on the positive $x$-axis, and the nature of this contribution can be understood from the local asymptotic behavior of $G(x)$ near $x=1$.

An equivalent way to describe the saddle point technique (as applied to the present context) is in probabilistic terms, based on the relation \eqref{eq:prob-dim-n}. The idea is that we are free to choose the value of the parameter $t$ in the random representation model, and we will do so in a way that causes the random variable $N$ to have its mean value at (or near) $n$. Probabilistic intuition then predicts that $N$, being the sum of independent random variables, will have an approximately Gaussian distribution, and from this intuition an asymptotic formula for the left-hand side of \eqref{eq:prob-dim-n} (hence for $r(n)$) can be easily guessed. This guess, known as a ``local central limit theorem,'' can then be proved using Fourier inversion, which is formally equivalent to the contour integral \eqref{eq:saddlepoint-contourint}. Despite the equivalence of the two approaches from the point of view of mathematical analysis, they are not \emph{psychologically} equivalent, and we prefer the probabilistic approach for putting the analysis on a more conceptual footing.

Say that a sequence of positive numbers $(t_n)_{n=1}^\infty$ is an (approximate) \demph{saddle point sequence} for the random representation model if the asymptotic relation
\begin{equation} \label{eq:saddle-point}
\expec_{t_n}(N) = -h'(t_n) = n + O(n^{7/10})
\end{equation}
holds as $n\to\infty$. (As often happens with saddle point analysis, it is not crucial to identify the saddle point precisely, and for reasons that will become apparent shortly, an approximate expression with $O(n^{7/10})$ error will be sufficient for our needs.) Our first goal is to find solutions to \eqref{eq:saddle-point}. 
Referring to the asymptotic expansion \eqref{eq:hoft-asym-deriv}, we see that in order for $-h'(t_n)$ to grow to infinity, as it must according to \eqref{eq:saddle-point}, the sequence $t_n$ will decrease towards $0$. Considering only the leading term in \eqref{eq:hoft-asym-deriv} temporarily, we get an approximate equation
$$
\tfrac23\mu_1 t_n^{-5/3} \approx n,
$$
which can be easily solved to show that $t_n \approx \left(3n/2\mu_1\right)^{-3/5}$. That is, $t_n$ should decay roughly as a constant times $n^{-3/5}$, where the constant is $(\tfrac23\mu_1)^{3/5} =2X^2$ (with $X$ as defined in \eqref{eq:defx}). As it turns out, this expression is not sufficiently precise for our needs because the contribution from the second-order term in \eqref{eq:hoft-asym-deriv} (proportional to $t_n^{-3/2} \asymp n^{9/10}$) is too large, so we look for a more precise solution to \eqref{eq:saddle-point} whose form is given by the ansatz
\begin{equation}
\label{eq:saddle-point-ansatz}
t_n = \tau_1 n^{-3/5} - \tau_2 n^{-7/10} - \tau_3 n^{-4/5}, 
\end{equation}
with $\tau_1 = 2X^2$ and $\tau_2, \tau_3$ being constant coefficients whose value needs to be determined. Solving for $\tau_2$ and $\tau_3$ is now an amusing calculus exercise. Introduce symbols $q,u,v$, defined by 
\begin{equation} 
q=n^{-1/10}, \qquad u=\frac{\tau_2}{\tau_1}, \qquad v=\frac{\tau_3}{\tau_1}.
\label{eq:tofn-qexpansion-params}
\end{equation}
Then $t_n$ can be written as
\begin{equation} \label{eq:tofn-qexpansion}
t_n = \tau_1 q^6 (1-uq-vq^2) = \tau_1 q^6 (1-q(u+vq)).
\end{equation}
Now consider each of the first two leading terms in \eqref{eq:hoft-asym-deriv} with $t=t_n$. Using the second-order Taylor expansion $(1-z)^\alpha=1-\alpha z + \tfrac12 \alpha(\alpha-1)z^2+O(z^3)$ and expanding in powers of $q$, we have
\begin{align}
\tfrac23 \mu_1 t_n^{-5/3} &= \tfrac23 \mu_1 \Big(\tau_1 q^6\Big)^{-5/3} \Big(1-q(u+vq)\Big)^{-5/3} 
\nonumber \\ &= q^{-10} \Big(1-\left(-\tfrac53\right)q(u+vq)+\tfrac12\left(-\tfrac53\right)\left(-\tfrac83\right)q^2(u+vq)^2+O(q^3)\Big)
\nonumber \\ &=
q^{-10} + \left(\tfrac53 u\right)q^{-9} + \left(\tfrac53 v + \tfrac53\cdot\tfrac43 u^2\right)q^{-8} + O(q^{-7}),
\label{eq:saddle-point-term1}
\end{align}
and similarly,
\begin{align}
\tfrac12\mu_2t_n^{-3/2} &= \tfrac12\mu_2 \Big(\tau_1 q^6\Big)^{-3/2}
\Big(1-q(u+vq)\Big)^{-3/2}
\nonumber \\ &= \left( \tfrac12\mu_2 \tau_1^{-3/2}\right) q^{-9}\Big(
1 - \left(-\tfrac32\right)q(u+vq)+ \tfrac12\left(-\tfrac32\right)\left(-\tfrac52\right)q^2(u+vq)^2
\Big)
\nonumber \\ &= 
\left( \tfrac12\mu_2 \tau_1^{-3/2}\right) q^{-9}
+ \left(\tfrac32\cdot \tfrac12\mu_2 \tau_1^{-3/2} u \right)q^{-8} + O(q^{-7}).
\label{eq:saddle-point-term2}
\end{align}
The remaining terms are $O(t_n^{-1})=O(q^{-6})=O(n^{6/10})$, which is within our tolerance range for solving \eqref{eq:saddle-point}. Thus, adding up \eqref{eq:saddle-point-term1} and \eqref{eq:saddle-point-term2}, we get that
\begin{align*}
\expec_{t_n}(N) &= n + 
\left(\tfrac53 u+\tfrac12\mu_2 \tau_1^{-3/2}
\right)n^{9/10} \\ & \qquad + \left(
\tfrac53v+\tfrac53\cdot\tfrac43 u^2+\tfrac32\cdot\tfrac12\mu_2\tau_1^{-3/2} u
\right)n^{8/10} + O(n^{7/10}).
\end{align*}
Comparing this expansion to \eqref{eq:saddle-point} shows that the sequence $(t_n)_{n=1}^\infty$ will be a saddle point sequence provided that the equations
\begin{align*}
\tfrac53 u &= -\tfrac12\mu_2 \tau_1^{-3/2}, \\
\tfrac53v &= -\tfrac53\cdot\tfrac43 u^2-\tfrac32\cdot\tfrac12\mu_2\tau_1^{-3/2} u,
\end{align*}
hold, or equivalently, provided that $\tau_2, \tau_3$ take the values
\begin{align*}
\tau_2 &= -\tfrac{3}{10}\mu_2 \tau_1^{-1/2}, \\
\tau_3 &= -\tfrac43\tau_2^2\tau_1^{-1} - \tfrac{9}{20} \mu_2 \tau_1^{-3/2} \tau_2.
\end{align*}

After a small amount of further algebraic simplification, which we omit, we arrive at the following result.
 
\begin{lem}
Let the numbers $X,Y$ be defined by \eqref{eq:defx}--\eqref{eq:defy}. Define a sequence of numbers $(t_n)_{n=1}^\infty$ by
\begin{equation}
\label{eq:saddle-point-solution}
t_n = \tau_1 n^{-3/5} - \tau_2 n^{-7/10} - \tau_3 n^{-4/5} \qquad (n\ge1),
\end{equation}
where $\tau_1,\tau_2,\tau_3$ are the constants
\begin{align*}
\tau_1 &= 2X^2, \\[2pt]
\tau_2 &= \frac{3}{10}X^{-1}Y, \\[2pt]
\tau_3 &= \frac{3}{400}X^{-4}Y^2.
\end{align*}
Then $(t_n)_{n=1}^\infty$ is a saddle point sequence, that is, it satisfies the equation \eqref{eq:saddle-point}.
\end{lem}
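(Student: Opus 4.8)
The plan is to recognize that almost all the analytic work is already contained in the computation preceding the statement, so that the proof reduces to one routine algebraic verification. First I would observe that, since $\tau_1>0$ and the remaining two terms of \eqref{eq:saddle-point-solution} are of strictly smaller order, the sequence $t_n$ decreases to $0$, so the expansion \eqref{eq:hoft-asym-deriv} of $h'(t)$ is applicable at $t=t_n$. Writing $t_n=\tau_1 q^6\bigl(1-q(u+vq)\bigr)$ with $q=n^{-1/10}$, $u=\tau_2/\tau_1$, $v=\tau_3/\tau_1$, and expanding the two leading terms of \eqref{eq:hoft-asym-deriv} by the binomial series exactly as in \eqref{eq:saddle-point-term1}--\eqref{eq:saddle-point-term2}, one obtains
\begin{align*}
-h'(t_n) &= n+\Big(\tfrac53 u+\tfrac12\mu_2\tau_1^{-3/2}\Big)n^{9/10} \\
&\qquad +\Big(\tfrac53 v+\tfrac{20}{9}u^2+\tfrac34\mu_2\tau_1^{-3/2}u\Big)n^{8/10}+O(n^{7/10}),
\end{align*}
the leading coefficient being normalized to $n$ precisely by the choice $\tau_1=(\tfrac23\mu_1)^{3/5}$. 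Here the $O(n^{7/10})$ bundles the $O(q^{-7})$ tails of the two expansions together with the $O(t_n^{-1})=O(n^{3/5})$ contribution of the remaining terms of \eqref{eq:hoft-asym-deriv}; since the tolerance in \eqref{eq:saddle-point} is itself $O(n^{7/10})$, this is exactly why the three-term ansatz \eqref{eq:saddle-point-ansatz} suffices and no finer approximation of the saddle point is needed.

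Thus $(t_n)$ is a saddle point sequence as soon as the coefficients of $n^{9/10}$ and $n^{8/10}$ above vanish, that is, as soon as $\tau_2=-\tfrac{3}{10}\mu_2\tau_1^{-1/2}$ and $\tau_3=-\tfrac43\tau_2^2\tau_1^{-1}-\tfrac{9}{20}\mu_2\tau_1^{-3/2}\tau_2$ (solve the first, linear, condition for $u$, substitute, then solve the second for $v$). It therefore only remains to check that the explicit values $\tau_1=2X^2$, $\tau_2=\tfrac{3}{10}X^{-1}Y$, $\tau_3=\tfrac{3}{400}X^{-4}Y^2$ in the statement satisfy these relations. For that I would re-express $\mu_1,\mu_2$ through $X,Y$ using \eqref{eq:defx}--\eqref{eq:defy}: the identity $\Gamma(\tfrac13)^2\zeta(\tfrac53)=9X^{10/3}$ gives $\mu_1=3\cdot2^{2/3}X^{10/3}$, hence $(\tfrac23\mu_1)^{3/5}=2X^2$ and the $\tau_1$ relation holds; and $Y=-\sqrt\pi\,\zeta(\tfrac12)\zeta(\tfrac32)$ gives $\mu_2=\sqrt{2\pi}\,\zeta(\tfrac12)\zeta(\tfrac32)=-\sqrt2\,Y$. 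Substituting $\tau_1=2X^2$ and $\mu_2=-\sqrt2\,Y$ into the two conditions and simplifying the fractional powers of $2$ then yields precisely $\tau_2=\tfrac{3}{10}X^{-1}Y$ and $\tau_3=\tfrac{3}{400}X^{-4}Y^2$, finishing the proof; this last simplification is purely mechanical and is also verified in the companion \texttt{Mathematica} package \cite{romik-mathematica}.

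I do not expect any genuine obstacle here: the analytic content is the Mellin-transform expansion of Theorem~\ref{thm:hoft-asym}, which is already in hand, and the rest is exponent bookkeeping in $X$, $Y$, and powers of $2$. The only point that warrants a moment's care is confirming that the discarded truncation errors really are $O(n^{7/10})$ and not larger—but since $q^{-7}=n^{7/10}$ and $t_n^{-1}=O(n^{3/5})=O(n^{6/10})$, both fall comfortably inside the slack built into the definition \eqref{eq:saddle-point} of a saddle point sequence.
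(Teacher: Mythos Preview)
Your proposal is correct and follows essentially the same approach as the paper: the paper likewise substitutes the ansatz \eqref{eq:tofn-qexpansion} into the expansion \eqref{eq:hoft-asym-deriv}, arrives at exactly your displayed expansion of $-h'(t_n)$, sets the $n^{9/10}$ and $n^{8/10}$ coefficients to zero to obtain the same two conditions on $\tau_2,\tau_3$, and then states the lemma ``after a small amount of further algebraic simplification, which we omit.'' If anything, you go slightly further than the paper by spelling out the conversion $\mu_1=3\cdot 2^{2/3}X^{10/3}$, $\mu_2=-\sqrt{2}\,Y$ and checking the final values by hand rather than deferring entirely to the companion package.
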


It is worth noting that the expression \eqref{eq:saddle-point-solution} for the location of the saddle point requires considerably more precision than the case of Meinardus's theorem on partition asymptotics (discussed in Section~\ref{sec:asym-genfuns}). In Meinardus's analysis the saddle point could be defined using a single term, given as a constant times a power of $n$, analogous to the leading term $\tau_1 n^{-3/5}$ in \eqref{eq:saddle-point-solution}; see equation (6.2.15) on page~93 of \cite{andrews}. The reason is that Meinardus's theorem makes the relatively restrictive assumption that the Dirichlet series $\sum_m a_m m^{-s}$ (in the notation of \eqref{eq:meinardus-genfun}) can be analytically continued to the strip $\re(s)>\sigma_0$ for some $\sigma_0<0$, with only a single pole of order $1$ at some point $s=\alpha>0$. In our situation the relevant Dirichlet series is $2^s \wzeta(s)$, which has not one but \emph{two} separate poles with positive real parts, namely at $s=2/3$ and $s=1/2$. It is precisely this somewhat novel singularity structure that leads to the more involved saddle point analysis leading up to \eqref{eq:saddle-point-solution}, and which (as we shall see in the next few sections) ultimately causes the asymptotic formula \eqref{eq:su3-asym} for $r(n)$ to have the interesting structure it does.

\section{Saddle point analysis II: the local central limit theorem}

\begin{lem}
\label{lem:variance}
As $n\to\infty$, the variance of $N$ at the parameter value $t=t_n$ is given asymptotically by
\begin{equation} \label{eq:variance-asym}
\var_{t_n}(N) = \left(\tfrac56 X^{-2}\right)n^{8/5} + O(n^{3/2}).
\end{equation}
\end{lem}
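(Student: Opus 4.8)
The plan is to use the formula $\var_{t_n}(N) = h''(t_n)$ from Lemma~\ref{lem:formulas-expecvar} together with the asymptotic expansion \eqref{eq:hoft-asym-deriv2} for $h''(t)$ established in Theorem~\ref{thm:hoft-asym}. First I would substitute $t = t_n$, where $t_n$ is the saddle point sequence given by \eqref{eq:saddle-point-solution}, into the expansion
$$
h''(t) = \tfrac{10}{9}\mu_1 t^{-8/3} + \tfrac34 \mu_2 t^{-5/2} + \tfrac13 t^{-2} + O(t^{-3/2}).
$$
Since $t_n = \tau_1 n^{-3/5}(1 + O(n^{-1/10}))$ with $\tau_1 = 2X^2$, each term can be expanded in powers of $n^{-1/10}$: the leading term $\tfrac{10}{9}\mu_1 t_n^{-8/3}$ contributes at order $n^{(3/5)(8/3)} = n^{8/5}$, the term $\tfrac34\mu_2 t_n^{-5/2}$ contributes at order $n^{(3/5)(5/2)} = n^{3/2}$, the term $\tfrac13 t_n^{-2}$ contributes at order $n^{6/5}$, and the error term $O(t_n^{-3/2})$ is $O(n^{9/10})$. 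So only the leading term survives at the stated precision, and everything else is absorbed into $O(n^{3/2})$.

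The one genuine computation is the coefficient of $n^{8/5}$. I would write $t_n^{-8/3} = \tau_1^{-8/3} n^{8/5}(1 + O(n^{-1/10}))^{-8/3} = \tau_1^{-8/3} n^{8/5} + O(n^{8/5 - 1/10})$, so that
$$
\tfrac{10}{9}\mu_1 t_n^{-8/3} = \tfrac{10}{9}\mu_1 (2X^2)^{-8/3} n^{8/5} + O(n^{3/2}).
$$
It then remains to verify the algebraic identity $\tfrac{10}{9}\mu_1 (2X^2)^{-8/3} = \tfrac56 X^{-2}$. Recalling from Theorem~\ref{thm:hoft-asym} that $\mu_1 = \tfrac{2^{2/3}}{3}\Gamma(\tfrac13)^2\zeta(\tfrac53)$ and from \eqref{eq:defx} that $X = \big(\tfrac19\Gamma(\tfrac13)^2\zeta(\tfrac53)\big)^{3/10}$, we have $\Gamma(\tfrac13)^2\zeta(\tfrac53) = 9 X^{10/3}$, hence $\mu_1 = \tfrac{2^{2/3}}{3}\cdot 9 X^{10/3} = 3\cdot 2^{2/3} X^{10/3}$. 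Then
$$
\tfrac{10}{9}\mu_1 (2X^2)^{-8/3} = \tfrac{10}{9}\cdot 3\cdot 2^{2/3} X^{10/3}\cdot 2^{-8/3} X^{-16/3} = \tfrac{10}{3}\cdot 2^{-2}\cdot X^{-2} = \tfrac56 X^{-2},
$$
as required.

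There is no serious obstacle here: the result is a direct substitution into an already-established asymptotic expansion, with the only subtlety being bookkeeping of the error terms (checking that the subleading corrections from expanding $t_n^{-8/3}$, as well as the $t_n^{-5/2}$, $t_n^{-2}$, and $O(t_n^{-3/2})$ contributions, are all $O(n^{3/2})$ or smaller) and the elementary exponent arithmetic in the constant. The fact that \eqref{eq:hoft-asym-deriv2} is a valid \emph{termwise-differentiable} expansion, which was already part of the statement of Theorem~\ref{thm:hoft-asym}, is what makes this routine; the only place one must be slightly careful is that the expansion of $t_n$ in \eqref{eq:saddle-point-solution} has three terms, but since the second and third are lower order by factors $n^{-1/10}$ and $n^{-1/5}$, they only affect the $O(n^{3/2})$ remainder and need not be tracked explicitly.
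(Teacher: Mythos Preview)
Your proposal is correct and follows exactly the same approach as the paper: the paper's proof simply invokes Lemma~\ref{lem:formulas-expecvar} to write $\var_{t_n}(N)=h''(t_n)$, substitutes \eqref{eq:saddle-point-solution} into \eqref{eq:hoft-asym-deriv2}, and omits the routine calculation. You have supplied precisely that omitted calculation, with the order-of-magnitude bookkeeping and the verification of the constant $\tfrac56 X^{-2}$ both carried out correctly.
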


\begin{proof}
Since, by Lemma~\ref{lem:formulas-expecvar}, the variance is given by $\var_{t_n}(N)=h''(t_n)$, substituting the values \eqref{eq:saddle-point-solution} into the asymptotic expansion \eqref{eq:hoft-asym-deriv2} gives the result after a quick calculation, which we omit.
\end{proof}

We are now ready to formulate our local central limit theorem for the random variable $N$, whose proof will be the heart of the saddle point analysis. Note that the saddle point equation \eqref{eq:saddle-point} identifies the expected value $\expec_{t_n}(N)$ to within an error of $O(n^{7/10})$. On the other hand, Lemma~\ref{lem:variance} shows that $N$ has standard deviation (relative to the parameter value $t=t_n$) of order $n^{4/5}=n^{8/10}$. The local central limit theorem is the statement that the distribution of $N$ is asymptotically Gaussian near its mean value---or equivalently, near the value $n$, since the two values are $O(n^{-1/10})$ standard deviations apart (this was the reason we required an error no greater than $O(n^{7/10})$ in \eqref{eq:saddle-point}). 
The precise statement is as follows.

\begin{thm}[Local central limit theorem for $N$]
\label{thm:local-clt}
The variable $N$ ``satisfies a local central limit theorem near its mean value.'' More precisely, as $n\to\infty$ we have the asymptotic relation
\begin{equation} \label{eq:local-clt}
\prob_{t_n}(N=n) = \frac{1+o(1)}{\sqrt{2\pi \var_{t_n}(N)}} = (1+o(1))\frac{\sqrt{3}X}{\sqrt{5\pi}}\cdot n^{-4/5}.
\end{equation}
\end{thm}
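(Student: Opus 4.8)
The plan is to prove the local central limit theorem by Fourier inversion, using the probabilistic representation \eqref{eq:prob-dim-n} together with the explicit knowledge of the generating function $G(e^{-t})$ near its singularity that we obtained in Theorem~\ref{thm:hoft-asym}. Write $\phi_n(\theta) = \expec_{t_n}\!\big(e^{i\theta(N-\expec_{t_n}(N))}\big)$ for the (centered) characteristic function of $N$ at parameter $t=t_n$. Because $N$ is a sum of the independent integer-valued variables $\tfrac12 jk(j+k)X_{j,k}$, Fourier inversion gives
\begin{equation} \label{eq:fourier-inv-plan}
\prob_{t_n}(N=n) = \frac{1}{2\pi}\int_{-\pi}^{\pi} \phi_n(\theta)\, e^{-i\theta(n-\expec_{t_n}(N))}\,d\theta,
\end{equation}
and the product formula for the geometric characteristic function lets us write $\phi_n(\theta) = \exp\big(h(t_n - i\theta) - h(t_n) + i\theta h'(t_n)\big)$, so that the integrand is controlled entirely by the behavior of the analytic function $h$ near the real point $t_n$. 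The standard local-CLT strategy then splits the integral into a ``central'' range $|\theta| \le \delta_n$ and a ``tail'' range $\delta_n < |\theta| \le \pi$, for a suitable cutoff $\delta_n$ (something like $\delta_n \asymp t_n^{1+\epsilon}$, i.e.\ slightly larger than the reciprocal of the standard deviation $\sqrt{\var_{t_n}(N)} \asymp n^{4/5}$).

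For the central range, I would Taylor-expand $h(t_n - i\theta)$ around $\theta = 0$ to second order, using the derivative asymptotics \eqref{eq:hoft-asym-deriv}--\eqref{eq:hoft-asym-deriv2}: the constant and linear terms cancel against $-h(t_n) + i\theta h'(t_n)$, the quadratic term contributes $-\tfrac12 \theta^2 h''(t_n) = -\tfrac12 \theta^2 \var_{t_n}(N)$, and the cubic-and-higher remainder is $O(|\theta|^3 |h'''(t_n)|)$, which is negligible on $|\theta| \le \delta_n$ provided $\delta_n$ is chosen so that $\delta_n^3 |h'''(t_n)| \to 0$; here one needs a bound on $h'''(t_n)$, obtainable by differentiating \eqref{eq:hoft-invmellin2} once more (or by the same Mellin argument), giving $h'''(t) = O(t^{-11/3})$. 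Using also $n - \expec_{t_n}(N) = O(n^{7/10})$ from the saddle point equation \eqref{eq:saddle-point}, the oscillatory factor $e^{-i\theta(n-\expec_{t_n}(N))}$ is $1 + o(1)$ uniformly on the central range (since $\delta_n \cdot n^{7/10} \to 0$ if $\delta_n$ is chosen small enough — this is exactly why the $O(n^{7/10})$ error in \eqref{eq:saddle-point} was required and $n^{8/10}$ would not have sufficed). The central integral therefore reduces to a Gaussian integral $\frac{1}{2\pi}\int e^{-\theta^2 \var_{t_n}(N)/2}\,d\theta = \frac{1}{\sqrt{2\pi \var_{t_n}(N)}}$, up to a $(1+o(1))$ factor, after extending the truncated Gaussian to the whole line at negligible cost; plugging in Lemma~\ref{lem:variance} yields the stated value $(1+o(1))\frac{\sqrt{3}X}{\sqrt{5\pi}} n^{-4/5}$.

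The main obstacle is the tail estimate: one must show that $\int_{\delta_n < |\theta| \le \pi} |\phi_n(\theta)|\,d\theta$ is of smaller order than $1/\sqrt{\var_{t_n}(N)} \asymp n^{-4/5}$, i.e.\ that the generating function $G(e^{-t_n + i\theta})$ decays fast enough away from the positive real axis. Unlike the classical partition case, this decay is a genuinely number-theoretic statement about how the product $\prod_{j,k}(1-e^{-jk(j+k)(t_n - i\theta)/2})^{-1}$ behaves, and the naive term-by-term bound is too weak. I expect this to be handled (as the introduction foreshadows) by invoking the modular transformation properties of the Jacobi theta functions to get an effective lower bound on $\re\big(h(t_n) - h(t_n - i\theta)\big)$ uniformly for $\theta$ in the tail; this is precisely the ``key number-theoretic difficulty'' advertised in Section~\ref{sec:asym-genfuns}, and presumably its detailed treatment is deferred to the next section(s) of the paper, so here I would state the needed decay bound as a lemma and reduce \eqref{eq:local-clt} to it, leaving the modular-forms input to be supplied separately.
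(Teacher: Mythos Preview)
Your proposal is correct and follows the same route as the paper: Fourier inversion, a central Gaussian estimate, and the tail bound deferred to a separate Jacobi theta function analysis (the paper's Theorem~\ref{thm:hard-tailbound}). The only difference is in how the central range is executed---the paper bounds the cubic Taylor remainder term-by-term using the log-characteristic function of each geometric summand (its Lemma~\ref{lem:romik2005}, with the sum controlled as a Riemann sum via Lemma~\ref{lem:double-integral}) and then packages the result as pointwise convergence plus dominated convergence, rather than invoking a global $h'''$ bound as you do; this is a matter of bookkeeping rather than substance.
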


It is well-known that local central limit theorems for integer-valued random variables are inherently a more delicate and less robust phenomenon than the usual, non-local sort, and this case is no exception. The point is that the definition of $N$ involves a sum of independent, but not identically distributed, components $\tfrac12jk(j+k) X_{j,k}$, each of which takes its values in the sub-lattice $\big(\tfrac12jk(j+k)\big)\Z$ of $\Z$.
The proof of local CLTs of this type therefore invariably requires ruling out the potentially harmful influence of periodicities. (For example, in the most trivial example illustrating this principle, if the random variable we were looking at were a sum of random variables taking even values, then there would be zero probability for it to take on an odd value; however, even the absence of rigid or ``deterministic'' periodicities of this sort leaves room for more fuzzy periodicities of a probabilistic kind.) In our setting, this is at heart a number-theoretic claim, and the techniques of analytic number theory are the most appropriate to use to attack it. Specifically, it turns out that the modular transformation properties of the Jacobi theta functions are sufficient to prove the bounds we will need.

Before proceeding to read the proof of Theorem~\ref{thm:local-clt}, which we present in this section and the next one, the reader may wish to skip directly to Section~\ref{sec:proof-su3-asym}, where we show how Theorem~\ref{thm:local-clt} and the other results proved so far imply our theorem (Theorem~\ref{thm:su3-asym}) on the asymptotic enumeration of representations of $SU(3)$.

The proof of Theorem~\ref{thm:local-clt} begins with the characteristic function (a.k.a.\ Fourier-Stieltjes transform) of $N$, which we denote $\phi_t(u)$ and is given for $u\in \R$ by
\begin{align}
\phi_t(u) &= \expec_t(e^{i u N}) = \sum_{n=0}^\infty \prob_t(N=n)e^{i n u}
=\sum_{n=0}^\infty \frac{r(n)e^{-nt}}{G(e^{-t})} e^{inu}
\nonumber \\ &=\frac{1}{G(e^{-t})} \sum_{n=0}^\infty r(n)e^{-nt+inu}
= \frac{G(e^{-t+iu})}{G(e^{-t})}.
\label{eq:N-charfun}
\end{align}
(As before, the subscript $t$ emphasizes the dependence on the parameter $t$, which soon we will specialize to the saddle point values $t=t_n$.) Since $N$ is an integer-valued random variable, this is simply a Fourier series. Fourier inversion therefore gives that
\begin{equation} \label{eq:fourier-inversion}
\prob_t(N=n) = \frac{1}{2\pi} \int_{-\pi}^\pi \phi_t(u) e^{-inu}\,du. 
\end{equation}
Denote $\sigma_n = \left(\var_{t_n}(N)\right)^{1/2}$, the standard deviation of $N$. Setting $t=t_n$
and scaling the integration variable $u$ by $1/\sigma_n$ in \eqref{eq:fourier-inversion}, we get
$$
\prob_{t_n}(N=n)=\frac{1}{2\pi \sigma_n}\int_{-\pi \sigma_n}^{\pi \sigma_n} \phi_{t_n}(u/\sigma_n)e^{-inu/\sigma_n}\,du.
$$
Comparing this to the claim \eqref{eq:local-clt}, we see that it will be enough to prove that
\begin{equation} \label{eq:localclt-sufficient}
\int_{-\pi\sigma_n}^{\pi \sigma_n} 
\phi_{t_n}(u/\sigma_n)e^{-inu/\sigma_n}\,du \xrightarrow[n\to\infty]{} \sqrt{2\pi}.
\end{equation}

A key first step will be to prove pointwise convergence of the integrand to the characteristic function of the standard normal distribution.

\begin{thm}
\label{thm:nonlocal-clt}
For each $u\in\R$, we have
\begin{equation}
\label{eq:nonlocal-clt}
\phi_{t_n}(u/\sigma_n)e^{-inu/\sigma_n} \xrightarrow[n\to\infty]{} e^{-u^2/2}.
\end{equation}
\end{thm}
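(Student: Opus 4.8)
The plan is to work directly with the exponent of the characteristic function. Since $G(x)$ is analytic and zero-free on $\{|x|<1\}$, the function $h(s)=\log G(e^{-s})$ extends from $(0,\infty)$ to an analytic function on the half-plane $\re(s)>0$ (via the branch that is real on the positive real axis), and for real $u$ and $t>0$ one has the clean identity
\[
\phi_t(u)=\frac{G(e^{-t+iu})}{G(e^{-t})}=\exp\bigl(h(t-iu)-h(t)\bigr).
\]
Setting $t=t_n$ and replacing $u$ by $u/\sigma_n$, it is therefore enough, by continuity of the exponential, to show that for each fixed $u\in\R$ the exponent
\[
\psi_n:=h\!\left(t_n-\tfrac{iu}{\sigma_n}\right)-h(t_n)-\tfrac{inu}{\sigma_n}
\]
tends to $-u^2/2$ as $n\to\infty$.

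The heart of the argument is a second-order Taylor expansion of $h$ about $t_n$ along the short vertical segment joining $t_n$ to $t_n-iu/\sigma_n$:
\[
h\!\left(t_n-\tfrac{iu}{\sigma_n}\right)-h(t_n)=-\tfrac{iu}{\sigma_n}\,h'(t_n)-\tfrac{u^2}{2\sigma_n^2}\,h''(t_n)+R_n,
\qquad |R_n|\le\tfrac16\Bigl(\tfrac{|u|}{\sigma_n}\Bigr)^{3}\sup_w|h'''(w)|,
\]
the supremum over $w$ on the segment. Now I would feed in the three facts already available. First, the saddle-point relation \eqref{eq:saddle-point} gives $-h'(t_n)=\expec_{t_n}(N)=n+O(n^{7/10})$, so, using $\sigma_n\asymp n^{4/5}$ from Lemma~\ref{lem:variance}, the linear term equals $\tfrac{inu}{\sigma_n}+O(n^{7/10}/\sigma_n)=\tfrac{inu}{\sigma_n}+O(n^{-1/10})$. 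Second, by Lemma~\ref{lem:formulas-expecvar} we have $h''(t_n)=\var_{t_n}(N)=\sigma_n^2$ exactly, so the quadratic term is precisely $-u^2/2$. Third, I claim $\sup_w|h'''(w)|=O(t_n^{-11/3})$, whence $|R_n|=O\bigl(|u|^3\sigma_n^{-3}t_n^{-11/3}\bigr)=O(|u|^3 n^{-1/5})=o(1)$ for fixed $u$. Collecting the three pieces, $h(t_n-iu/\sigma_n)-h(t_n)=\tfrac{inu}{\sigma_n}-\tfrac{u^2}{2}+o(1)$, hence $\psi_n\to-u^2/2$, which is \eqref{eq:nonlocal-clt}.

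The only step requiring a little genuine work is the uniform bound $|h'''(w)|=O(t_n^{-11/3})$ on a disk of radius $\asymp t_n$ about $t_n$ (which contains the whole segment once $n$ is large, since $|u|/\sigma_n\to0$); the expansion of Theorem~\ref{thm:hoft-asym} is only along the real axis and only through $h''$, so it does not apply verbatim. I expect this to be the main obstacle, but it is a mild one: from the harmonic-sum representation $h(s)=\sum_{m\ge1}\tfrac1m f(ms)$ and the termwise bound $|f(ms)|\le f\bigl(m\,\re(s)\bigr)=O\bigl((m\,\re(s))^{-2/3}\bigr)$ coming from Lemma~\ref{lem:hoft-asymbounds}, one obtains the crude global estimate $|h(s)|=O\bigl((\re(s))^{-2/3}\bigr)$ throughout $\re(s)>0$, and a Cauchy estimate on a disk of radius $t_n/2$ then yields $|h'''(w)|=O\bigl(t_n^{-3}\cdot t_n^{-2/3}\bigr)=O(t_n^{-11/3})$; alternatively one differentiates the Mellin representation \eqref{eq:hoft-invmellin} three times and reruns the contour-shift analysis of Theorem~\ref{thm:hoft-asym}. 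Everything else is a routine characteristic-function / central-limit-theorem computation, with the saddle-point equation and the variance asymptotics supplying exactly the cancellations that make the linear term disappear and the quadratic term land on $-u^2/2$.
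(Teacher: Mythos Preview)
Your argument is correct, and it takes a genuinely different route from the paper's.

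The paper expands $\log\bigl[\phi_{t_n}(u/\sigma_n)e^{-inu/\sigma_n}\bigr]$ as the double sum over $(j,k)$ of log-characteristic functions of the individual geometric summands $\tfrac12 jk(j+k)X_{j,k}$, and controls the cubic remainder termwise via a dedicated lemma bounding $\eta(t,u)=\log\frac{1-e^{-t}}{1-e^{-t+iu}}-i\frac{e^{-t}}{1-e^{-t}}u+\tfrac12\frac{e^{-t}}{(1-e^{-t})^2}u^2$ by $C e^{-t}|u|^3/(1-e^{-t})^3$. The resulting sum of remainders is then recognized as $\sigma_n^{-3}t_n^{-11/3}$ times a Riemann sum for a convergent double integral, yielding $|R_n|=O(|u|^3 n^{-1/5})$, exactly the estimate you obtain.

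Your approach bypasses both of those auxiliary lemmas by treating $h$ globally as an analytic function on $\{\re s>0\}$, Taylor-expanding $h$ directly, and bounding $h'''$ via a Cauchy estimate fed by the crude global bound $|h(s)|=O\bigl((\re s)^{-2/3}\bigr)$ (itself an immediate consequence of the harmonic-sum representation and Lemma~\ref{lem:hoft-asymbounds}). This is shorter and more conceptual; it also makes transparent why the exponent $-11/3$ appears (three Cauchy derivatives at scale $t_n$ on top of a $t_n^{-2/3}$ bound). The paper's route, by contrast, keeps the probabilistic structure visible and packages the remainder as a Riemann sum; its termwise lemma on $\eta$ is the kind of estimate that recurs in Fristedt-style partition asymptotics, so it has some reuse value, but for the present theorem your Cauchy-estimate shortcut is perfectly adequate and arguably cleaner. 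Both arguments deliver the same quantitative remainder $O(|u|^3 n^{-1/5})$, so your version also immediately yields the ``easy tail bound'' (Lemma~\ref{lem:easy-tailbound}) needed later.
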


Note that Theorem~\ref{thm:nonlocal-clt} is equivalent to the claim that the normalized random variable $\hat{N}=\sigma_n^{-1}(N-\expec_{t_n}(N))$ converges in distribution to a standard normal random variable, that is, to a \emph{non-local} CLT. 

The proof of Theorem~\ref{thm:nonlocal-clt} involves not much more than a Taylor expansion, but one needs to make sure that the dependence on both the parameters $t$ and $u$ is carefully taken into account. We precede the main part of the proof with two technical lemmas. To motivate the statement of the first lemma, recall that if $X$ is a random variable with the geometric distribution $\operatorname{Geom}_0(p)$, where the parameter $0<p<1$ is expressed as $p=1-e^{-t}$, then the mean and variance of $X$ are given by $$ \expec(X) = \frac{1-p}{p}=\frac{e^{-t}}{1-e^{-t}}, \qquad \var(X) = \frac{1-p}{p^2}=\frac{e^{-t}}{(1-e^{-t})^2},
$$
and the 
characteristic function $\phi_X(u)=\expec(e^{iuX})$ of $X$ can be easily computed to be
$$ \phi_X(u) =\sum_{n=0}^\infty (1-e^{-t})e^{-nt} e^{inu} = \frac{1-e^{-t}}{1-e^{-t+iu}}. $$
In particular, if $\log z$ denotes the principal branch of the logarithm function, then
$\log \phi_X(u)$ has the Taylor expansion
$$ \log \phi_X(u) = i\, \expec(X) u -\tfrac12 \var(X)u^2 + O(u^3) \ \ \ \textrm{ as }u\to0. $$

\begin{lem}
\label{lem:romik2005}
Define a function 
$$
\eta(t,u) = \log\frac{1-e^{-t}}{1-e^{-t+iu}} - i \frac{e^{-t}}{1-e^{-t}}u + \tfrac12 \frac{e^{-t}}{(1-e^{-t})^2} u^2 \qquad (t>0, \ u\in\R).
$$
For some constant $C>0$, the bound
$$ |\eta(t,u)| \le C \frac{e^{-t}|u|^3}{(1-e^{-t})^3} $$
holds for all $u\in\R$ and $t>0$.
\end{lem}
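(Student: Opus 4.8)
# Proof Proposal for Lemma~\ref{lem:romik2005}

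The plan is to fix $t > 0$, treat $\eta(t,u)$ as an analytic function of the real variable $u$ (indeed of a complex variable $u$ in a neighborhood of the real axis, away from the points where $1 - e^{-t+iu}$ vanishes), and obtain the claimed bound by a Taylor-with-remainder argument in the $u$-variable, keeping careful track of how the implied constant depends on $t$. The crucial structural observation is that $\eta(t,u)$ is, by construction, precisely $\log\phi_X(u)$ minus the first three terms of its Taylor expansion about $u = 0$, where $X \sim \operatorname{Geom}_0(t)$; so $\eta(t, \cdot)$ vanishes to order $3$ at $u = 0$, and the whole content of the lemma is that the order-$3$ remainder is controlled \emph{uniformly in $t$} by the natural scale $\frac{e^{-t}}{(1-e^{-t})^3}$ that already governs the third cumulant of $X$.

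The key steps, in order, are as follows. First I would reduce to a statement about $\psi(u) := \log\!\big(\tfrac{1-e^{-t}}{1-e^{-t+iu}}\big)$: since $\eta(t,u) = \psi(u) - \psi(0) - \psi'(0)u - \tfrac12\psi''(0)u^2$ (noting $\psi(0)=0$, $\psi'(0) = i\,\expec(X)$, $\psi''(0) = -\var(X)$), Taylor's theorem with integral remainder gives $\eta(t,u) = \tfrac12\int_0^u (u-w)^2 \psi'''(w)\,dw$, so it suffices to bound $|\psi'''(w)|$ for $w$ between $0$ and $u$. Second, I would compute $\psi'''$ explicitly. Writing $z = e^{-t+iw}$ and differentiating $\psi(w) = \log(1-e^{-t}) - \log(1-z)$ with $\tfrac{dz}{dw} = iz$, one finds that $\psi', \psi'', \psi'''$ are rational expressions in $z$ with denominator a power of $(1-z)$ — explicitly $\psi'''(w)$ is a polynomial in $z$ of low degree divided by $(1-z)^3$, with bounded numerator since $|z| = e^{-t} < 1$. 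Third, and this is where the uniformity in $t$ must be extracted, I would bound $|1-z|$ from below: for $z = e^{-t+iw}$ one has $|1 - e^{-t+iw}| \geq 1 - e^{-t}$ is \emph{false} in general (the modulus can be smaller is also false — in fact $|1 - e^{-t+iw}| \ge |1 - e^{-t}|$ when... ), so more care is needed — the correct elementary bound is $|1 - e^{-t+iw}|^2 = (1-e^{-t}\cos w)^2 + e^{-2t}\sin^2 w = 1 - 2e^{-t}\cos w + e^{-2t} \ge (1 - e^{-t})^2$, which \emph{does} hold since $\cos w \le 1$. Thus $|1-z| \ge 1 - e^{-t}$ uniformly in $w$, and combining with $|z| = e^{-t}$ gives $|\psi'''(w)| \le C\, e^{-t}/(1-e^{-t})^3$ for an absolute constant $C$ (the $e^{-t}$ in the numerator comes from the factor $z$ present in every term of $\psi'''$, each term carrying at least one power of $z$). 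Fourth, I would substitute this bound into the remainder integral: $|\eta(t,u)| \le \tfrac12 \big(\tfrac{C e^{-t}}{(1-e^{-t})^3}\big)\int_0^{|u|}(|u|-w)^2\,dw = \tfrac{C e^{-t}|u|^3}{6(1-e^{-t})^3}$, which is exactly the claimed inequality (with a relabeled constant).

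The main obstacle — and it is a mild one — is handling the branch of the logarithm and the possibility that $1 - e^{-t+iu}$ approaches $0$: this happens only in the limit $t \to 0$, $u \to 0$ simultaneously. For fixed $t > 0$ there is no zero on the real $u$-axis (since $|1-z| \ge 1 - e^{-t} > 0$), so $\psi$ is genuinely analytic along the whole line segment from $0$ to $u$ and the principal branch is well-defined and continuous there; one should remark that $\psi(0) = 0$ fixes the branch consistently. The one genuinely substantive point requiring attention is verifying that the numerator polynomials in the expressions for $\psi', \psi'', \psi'''$ indeed each carry a factor of $z$ (so that the $e^{-t}$ appears in the final bound, matching the third-cumulant scaling rather than a cruder $1/(1-e^{-t})^3$); this is immediate from the chain rule since $\tfrac{d}{dw}$ brings down a factor $iz$ each time, and $\psi'(w) = \tfrac{iz}{1-z}$ already has the factor $z$. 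The remaining computations are routine and I would relegate the explicit form of $\psi'''$ to a one-line display or to the companion \texttt{Mathematica} package.
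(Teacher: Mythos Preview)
Your argument is correct. The paper itself does not prove this lemma: it simply cites pp.~10--11 of \cite{romik} (the author's earlier paper on partitions of $n$ into $t\sqrt{n}$ parts). Your Taylor-with-integral-remainder approach is the standard and natural way to establish such a cumulant-generating-function bound, and is almost certainly what appears in the cited reference. The explicit computation gives $\psi'''(w) = -iz(1+z)/(1-z)^3$ with $z=e^{-t+iw}$, so that $|\psi'''(w)| \le 2e^{-t}/(1-e^{-t})^3$, and the remainder integral yields the bound with $C=1/3$.

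Two minor remarks on presentation. First, the passage where you write ``$|1-e^{-t+iw}|\ge 1-e^{-t}$ is \emph{false} in general (the modulus can be smaller is also false\,\ldots)'' reads as stream-of-consciousness and should be cleaned up: the inequality is simply \emph{true}, as you then correctly verify via $|1-z|^2 = 1 - 2e^{-t}\cos w + e^{-2t} \ge (1-e^{-t})^2$. Second, for the branch issue you can just observe that $\operatorname{Re}(1-e^{-t+iw}) = 1 - e^{-t}\cos w \ge 1 - e^{-t} > 0$, so the argument stays in the right half-plane and the principal logarithm is globally defined and analytic along the real $w$-axis; no further discussion is needed.
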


Lemma~\ref{lem:romik2005} has been previously used in the study of integer partition asymptotics. See pp.~10--11 of \cite{romik} for the proof.

\begin{lem} 
\label{lem:double-integral}
The improper two-dimensional integral
\begin{equation} \label{eq:double-integral}
I=\int_0^\infty \int_0^\infty \left(\frac{xy(x+y)}{2}\right)^3\frac{\exp\left(-\tfrac12xy(x+y)\right)}{\left(1-\exp\left(-\tfrac12xy(x+y)\right)\right)^3}\,dx\,dy
\end{equation}
converges.
\end{lem}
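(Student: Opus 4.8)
The plan is to show that the integrand in \eqref{eq:double-integral} is dominated by an integrable function, so that convergence of $I$ reduces to a computation essentially identical to the one already carried out for the integral $I$ appearing in the proof of Lemma~\ref{lem:hoft-asymbounds}. First I would split the region of integration into the ``bulk'' region where $s:=\tfrac12 xy(x+y)$ is bounded below, say $s\ge1$, and the region near the coordinate axes and the origin where $s$ is small. On the bulk region the factor $s^3 e^{-s}/(1-e^{-s})^3$ is bounded (indeed $1/(1-e^{-s})\le 1/(1-e^{-1})$ there), so the integrand is at most a constant times $s^3 e^{-s}$, which decays rapidly; more precisely one can absorb, say, one factor $e^{-s/2}$ to kill the polynomial $s^3$ and be left with $Ce^{-s/2}$, and then $\int\int e^{-\frac14 xy(x+y)}\,dx\,dy<\infty$ by exactly the change of variables $u=x$, $v=\tfrac14 xy(x+y)$ used in Lemma~\ref{lem:hoft-asymbounds} (the constant $\tfrac12$ versus $\tfrac14$ in the exponent is immaterial).

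Next I would handle the region $\{s<1\}$. Here the key elementary estimate is that for $0<s<1$ we have $1-e^{-s}\ge c\,s$ for some absolute constant $c>0$ (e.g.\ $c=1-e^{-1}$), hence
\begin{equation}
\frac{s^3 e^{-s}}{(1-e^{-s})^3}\le \frac{s^3}{c^3 s^3}=c^{-3},
\end{equation}
so on $\{s<1\}$ the integrand is uniformly bounded by the constant $c^{-3}$. Thus on $\{s<1\}$ the integrand is at most $c^{-3}$, and it remains only to check that $\{s<1\}$ has finite area, or rather that $\int\int_{\{s<1\}}1\,dx\,dy<\infty$. This last point needs a small argument because the region $\{\,(x,y):\ xy(x+y)<2\,\}$ is unbounded: it contains, for instance, a neighborhood of each axis stretching to infinity. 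But for $x\ge1$ the condition $xy(x+y)<2$ forces $y<2/x^2$, so the part of $\{s<1\}$ with $x\ge1$ has area at most $\int_1^\infty 2x^{-2}\,dx=2$; by symmetry the part with $y\ge1$ also has finite area; and the part with $x,y\le1$ has area at most $1$. Hence $\{s<1\}$ has finite area and contributes a finite amount.

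Combining the two regions shows that the integrand of \eqref{eq:double-integral} is bounded above by an integrable function on all of $(0,\infty)^2$, so $I$ converges. I do not expect a genuine obstacle here; the only mildly delicate point is the one just addressed, namely that the small-$s$ region $\{xy(x+y)<2\}$ is unbounded and one must verify it nonetheless has finite area (which follows from the decay $y=O(x^{-2})$ along each axis). Everything else is a repetition of the change-of-variables computation already performed in the proof of Lemma~\ref{lem:hoft-asymbounds}, where the finiteness of $\int_0^\infty v^{-1/3}e^{-v}\,dv=\Gamma(2/3)$ and $\int_0^\infty (w^4+w)^{-1/2}\,dw$ was established.
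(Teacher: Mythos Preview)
Your argument is correct. The split into $\{s\ge 1\}$ and $\{s<1\}$ works exactly as you say: on the first region the integrand is dominated by $Ce^{-\frac14 xy(x+y)}$, whose integrability follows from the change of variables of Lemma~\ref{lem:hoft-asymbounds}; on the second region the bound $1-e^{-s}\ge (1-e^{-1})s$ makes the integrand uniformly bounded, and your observation that $xy(x+y)<2$ with $x\ge1$ forces $y<2/x^2$ correctly shows the region has finite area.

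This is a genuinely different route from the paper's. The paper instead expands the denominator via the power series $\frac{w}{(1-w)^3}=\sum_{n\ge1}\binom{n+1}{2}w^n$ with $w=e^{-s}$, writes $I$ as $\sum_{n\ge1}\binom{n+1}{2}\iint s^3 e^{-ns}\,dx\,dy$, and then rescales $(x,y)\mapsto(n^{1/3}x,n^{1/3}y)$ to pull out a factor $n^{-11/3}$; this leaves a single $n$-independent integral times $\sum_n \binom{n+1}{2}n^{-11/3}\le\zeta(5/3)$. Your approach is more elementary and self-contained, avoiding the series trick at the cost of a small geometric argument about the unbounded region near the axes; the paper's approach is slicker and mirrors the harmonic-sum structure used elsewhere (e.g.\ in deriving \eqref{eq:hoft-harmonic-sum}), but both ultimately lean on the same change of variables from Lemma~\ref{lem:hoft-asymbounds} to handle the large-$s$ behavior.
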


\begin{proof}
Using the expansion $\frac{x}{(1-x)^3}=\sum_{n=1}^\infty \binom{n+1}{2}x^n$ to expand the integrand, we get that
\begin{align*}
I&=\sum_{n=1}^\infty \binom{n+1}{2} \int_0^\infty \int_0^\infty 
\left(\frac{xy(x+y)}{2}\right)^3 \exp\left(-\tfrac12nxy(x+y)\right)\,dx\,dy
\\ &\le \sum_{n=1}^\infty \frac{n^2}{n^{11/3}}
\int_0^\infty \int_0^\infty 
\left(\frac{ab(a+b)}{2}\right)^3 \exp\left(-\tfrac12ab(a+b)\right)\,da\,db
\\ &= \zeta\left(\tfrac53\right)
\int_0^\infty \int_0^\infty 
\left(\frac{ab(a+b)}{2}\right)^3 \exp\left(-\tfrac12ab(a+b)\right)\,da\,db.
\end{align*}
The fact that this last integral converges can be shown easily using the same change of variables used in the proof of Lemma~\ref{lem:hoft-asymbounds}.
\end{proof}

\begin{proof}[Proof of Theorem~\ref{thm:nonlocal-clt}]
Using \eqref{eq:N-charfun} and \eqref{eq:Gofx-def}, 
we represent $\phi_{t_n}(u/\sigma_n)e^{-inu/\sigma_n}$ as an infinite product. Taking the logarithm, we have that
\begin{align}
\log&\left[
\phi_{t_n}(u/\sigma_n)e^{-inu/\sigma_n}\right] = \log \frac{G(e^{-t_n+iu/\sigma_n})}{G(e^{-t_n})}-\frac{inu}{\sigma_n}
\nonumber \\ &= \sum_{j,k=1}^\infty \log \left(\frac{1-\exp\left(-\tfrac12 jk(j+k)t_n\right)}{1-\exp\left(\tfrac12 jk(j+k)(-t_n+iu/\sigma_n)\right)}\right)-\frac{inu}{\sigma_n}
\nonumber \\ &=
\sum_{j,k=1}^\infty 
\Bigg[
 \eta\left(\frac{jk(j+k)t_n}{2},\frac{jk(j+k)u}{2\sigma_n}\right)
\nonumber \\ & \qquad \qquad +i \frac{\tfrac12jk(j+k)\exp\left(-\tfrac12 jk(j+k)t_n\right)}{1-\exp\left(-\tfrac12 jk(j+k)t_n\right)}\cdot \frac{u}{\sigma_n} 
\nonumber \\ & 
\qquad \qquad
- \tfrac12
\frac{\left(\tfrac12jk(j+k)\right)^2\exp\left(-\tfrac12 jk(j+k)t_n\right)}{\left(1-\exp\left(-\tfrac12 jk(j+k)t_n\right)\right)^2}\cdot \frac{u^2}{\sigma_n^2}
\Bigg]-\frac{inu}{\sigma_n}
\nonumber \\ &=
\sum_{j,k=1}^\infty \eta\left(\frac{jk(j+k)t_n}{2},\frac{jk(j+k)u}{2\sigma_n}\right)
+i\left( \frac{\expec_{t_n}(N) - n}{\sigma_n}\right)u
\nonumber \\ & \qquad \qquad \quad
-\tfrac12 \var_{t_n}(N) \frac{u^2}{\sigma_n^2}
= -\tfrac12 u^2 + O(n^{-1/10}u) + R_n(u),
\label{eq:log-charfun}
\end{align}
where we denote
$ R_n(u)=\sum_{j,k=1}^\infty \eta\left(\frac{jk(j+k)t_n}{2},\frac{jk(j+k)u}{2\sigma_n}\right).$
Invoking Lemma~\ref{lem:romik2005}, this quantity can be bounded as
\begin{align*}
|R_n(u)| &\le \sum_{j,k=1}^\infty \left|\eta\left(\frac{jk(j+k)t_n}{2},\frac{jk(j+k)u}{2\sigma_n}\right)\right| 
\\ &\le C \frac{|u|^3}{\sigma_n^3} \sum_{j,k=1}^\infty
\left(\frac{jk(j+k)}{2}\right)^3 
\frac{\exp\left(-\tfrac12 jk(j+k)t_n\right)}{\left(1-\exp\left(-\tfrac12 jk(j+k)t_n\right)\right)^3}
\\ &= C \frac{|u|^3}{\sigma_n^3 t_n^{11/3}} \sum_{j,k=1}^\infty
\left(\frac{t_n jk(j+k)}{2}\right)^3 
\frac{\exp\left(-\tfrac12 jk(j+k)t_n\right)}{\left(1-\exp\left(-\tfrac12 jk(j+k)t_n\right)\right)^3}
\cdot t_n^{2/3}.
\end{align*}
In this expression, the prefactor 
$C |u|^3\sigma_n^{-3} t_n^{-11/3}$
is of order $O(n^{-1/5} |u|^3)$, and the sum is a Riemann sum (with $\Delta x=\Delta y = t_n^{1/3}$) for the double integral \eqref{eq:double-integral}. Since that integral is finite by Lemma~\ref{lem:double-integral}, the Riemann sum converges to it and in particular is bounded. This together with the above computations establishes that
$$
\log\left[
\phi_{t_n}(u/\sigma_n)e^{-inu/\sigma_n}\right] \xrightarrow[n\to\infty]{} -\tfrac12 u^2, 
$$
which implies \eqref{eq:nonlocal-clt} and therefore finishes the proof.
\end{proof}

The computations in the proof above also imply a useful bound on $|\phi_{t_n}(u/\sigma_n)|$, which we summarize in the following lemma.

\begin{lem}
\label{lem:easy-tailbound}
There exists a constant $D>0$ such that the bound
\begin{equation} \label{eq:easy-tailbound} 
|\phi_{t_n}(u/\sigma_n)| \le e^{-u^2/4}
\end{equation}
holds for $u$ satisfying $|u|\le D n^{1/5}$.
\end{lem}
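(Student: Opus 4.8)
The plan is to derive this directly from the estimates already obtained in the proof of Theorem~\ref{thm:nonlocal-clt}; the lemma is essentially a uniform-in-$n$ restatement of part of that computation. First I would take real parts in \eqref{eq:log-charfun}: because $e^{-inu/\sigma_n}$ has modulus $1$, because the term $i\big((\expec_{t_n}(N)-n)/\sigma_n\big)u$ is purely imaginary, and because $\tfrac12\var_{t_n}(N)\,u^2/\sigma_n^2$ equals $\tfrac12 u^2$ exactly, what survives is the clean identity
$$
\log\big|\phi_{t_n}(u/\sigma_n)\big| = -\tfrac12 u^2 + \re R_n(u),
\qquad
R_n(u) = \sum_{j,k=1}^\infty \eta\!\left(\tfrac{jk(j+k)t_n}{2},\,\tfrac{jk(j+k)u}{2\sigma_n}\right),
$$
valid for every $u\in\R$ and every $n$. (The same identity can be obtained without passing through \eqref{eq:log-charfun}, by writing $\log|\phi_{t_n}(u/\sigma_n)| = \sum_{j,k}\re\log\frac{1-e^{-jk(j+k)t_n/2}}{1-e^{jk(j+k)(-t_n+iu/\sigma_n)/2}}$, substituting the definition of $\eta$ from Lemma~\ref{lem:romik2005}, and noting that the quadratic part of the resulting sum is $\var_{t_n}(N)/\sigma_n^2=1$.)

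Second, I would reuse the bound on $R_n(u)$ from that proof: Lemma~\ref{lem:romik2005} together with the Riemann-sum estimate gives $|R_n(u)| \le C\,|u|^3\,\sigma_n^{-3}t_n^{-11/3}\,S_n$, where $S_n$ is a Riemann sum (with mesh $\Delta x=\Delta y=t_n^{1/3}$) for the finite integral $I$ of Lemma~\ref{lem:double-integral}, and where $\sigma_n^3 t_n^{11/3}\asymp n^{1/5}$ by Lemma~\ref{lem:variance} together with \eqref{eq:saddle-point-solution}. Since $S_n\to I$, the sequence $(S_n)_{n\ge1}$ is bounded, so there is a constant $C_2>0$ independent of $n$ with $|R_n(u)| \le C_2\,n^{-1/5}|u|^3$ for all $n\ge1$ and all $u\in\R$. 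Combining this with the first display yields $\log|\phi_{t_n}(u/\sigma_n)| \le -\tfrac12 u^2 + C_2 n^{-1/5}|u|^3$.

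The final step is then to choose $D = 1/(4C_2)$: for $|u|\le D n^{1/5}$ one has $C_2 n^{-1/5}|u|^3 = \big(C_2 n^{-1/5}|u|\big)\,u^2 \le (C_2 D)\,u^2 = \tfrac14 u^2$, hence $\log|\phi_{t_n}(u/\sigma_n)| \le -\tfrac14 u^2$, which is \eqref{eq:easy-tailbound} after exponentiating. I do not foresee a genuine obstacle, since the substantive analytic work was already done in proving Theorem~\ref{thm:nonlocal-clt}; the only point needing a little care is the uniformity of the implied constant in $|R_n(u)| = O(n^{-1/5}|u|^3)$ over \emph{all} $n\ge1$ rather than just $n$ large, which is exactly why one records that the Riemann sums $S_n$ are bounded. (Alternatively one could establish the inequality for $n\ge n_0$ and absorb the finitely many remaining $n$ by shrinking $D$, using $|\phi_{t_n}(v)|<1$ for $0<|v|<2\pi$ and continuity, but routing everything through the uniform bound on $S_n$ is cleaner.)
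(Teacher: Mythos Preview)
Your proposal is correct and follows essentially the same approach as the paper: take real parts of \eqref{eq:log-charfun}, observe that the linear term is purely imaginary and the quadratic term is exactly $-\tfrac12 u^2$, bound $\re R_n(u)$ by the $O(n^{-1/5}|u|^3)$ estimate already obtained, and then choose $D$ small enough to absorb the cubic term into the quadratic one. Your explicit attention to the uniformity of the constant (via boundedness of the Riemann sums $S_n$) is a worthwhile elaboration, but the argument is the same.
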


\begin{proof}
By our earlier computations, we have that
$$ |\phi_{t_n}(u/\sigma_n)|=\exp\left[ \re \Big(\log \phi_{t_n}(u/\sigma_n) \Big)\right]
\le \exp\left( -\tfrac12 u^2 + C_1 n^{-1/5}|u|^3 \right)
$$
for some constant $C_1>0$. 
(Note that the term $i(\expec_{t_n}(N)-n)u/\sigma_n$ in \eqref{eq:log-charfun} is a pure imaginary number and therefore only contributes a phase factor to $\phi_{t_n}(u/\sigma_n)$.)
Restricting $u$ to satisfy $|u|\le D n^{1/5}$ for some sufficiently small $D>0$ guarantees that $-\tfrac12u^2 + C_1n^{-1/5}|u|^3 \le -\tfrac14 u^2$, so in that range we get the bound \eqref{eq:easy-tailbound}.
\end{proof}

\section{Saddle point analysis III: the Jacobi theta functions}

In order to deduce \eqref{eq:localclt-sufficient} (and hence Theorem~\ref{thm:local-clt}) from Theorem~\ref{thm:nonlocal-clt}, we will need the ``easy'' tail bound \eqref{eq:easy-tailbound} together with a much more delicate bound on $|\phi_{t_n}(u/\sigma_n)|$ that can be shown to hold outside the range $|u|\le Dn^{1/5}$. The goal of this section is to prove this more difficult bound, given in the following theorem.

\begin{thm}
\label{thm:hard-tailbound}
There exists a constant $E>0$ such that the bound
\begin{equation} \label{eq:hard-tailbound} 
|\phi_{t_n}(u/\sigma_n)| \le \exp\left(-E n^{3/10}\right)
\end{equation}
holds for $u$ satisfying $D n^{1/5} \le |u|\le \pi\sigma_n$, where $D$ is the constant from Lemma~\ref{lem:easy-tailbound}.
\end{thm}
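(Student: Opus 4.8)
The plan is to pass from the ratio $|\phi_{t_n}(u/\sigma_n)| = |G(e^{-t_n+iu/\sigma_n})|/G(e^{-t_n})$ to a lower bound for $-\log|\phi_{t_n}(u/\sigma_n)|$ in terms of a single Jacobi theta function. Write $v=u/\sigma_n$. Since $\sigma_n = (1+o(1))\sqrt{5/6}\,X^{-1}n^{4/5}$ by Lemma~\ref{lem:variance} and $t_n = (1+o(1))\,2X^2 n^{-3/5}$, the hypothesis $Dn^{1/5}\le|u|\le\pi\sigma_n$ becomes $c_0 n^{-3/5}\le|v|\le\pi$ for a constant $c_0>0$, and in particular $|v|\ge c_1 t_n$ for some constant $c_1>0$. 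Using \eqref{eq:Gofx-def}, put $a_{j,k}=\tfrac12 jk(j+k)$ and $y_{j,k}=2e^{-t_n a_{j,k}}(1-\cos(va_{j,k}))/(1-e^{-t_n a_{j,k}})^2\ge0$; a short computation based on $|1-\rho e^{i\beta}|^2=(1-\rho)^2+2\rho(1-\cos\beta)$ gives $-2\log|\phi_{t_n}(v)|=\sum_{j,k}\log(1+y_{j,k})$, and the inequalities $\log(1+y)\ge y/(1+y)$ and $1-\cos\le2$ yield $\log(1+y_{j,k})\ge\tfrac12 e^{-t_n a_{j,k}}(1-\cos(va_{j,k}))$. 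Dropping all terms except those with $k=2$ (permissible, as every summand is nonnegative), for which $a_{j,2}=(j+1)^2-1$, and reindexing by $\ell=j+1$, I would obtain
$$
-\log|\phi_{t_n}(v)| \ge \tfrac14 e^{t_n}\sum_{\ell\ge2} e^{-t_n\ell^2}\bigl(1-\cos(v(\ell^2-1))\bigr) \ge \tfrac18\bigl(\Theta(t_n)-|\Theta(t_n-iv)|\bigr)-1,
$$
where $\Theta(w):=\sum_{\ell\in\Z}e^{-w\ell^2}$ (convergent for $\re(w)>0$), the last step using $\cos(v(\ell^2-1))=\re(e^{-iv}e^{iv\ell^2})$, the triangle inequality, and the identity $\sum_{\ell\ge2}e^{-w\ell^2}=\tfrac12(\Theta(w)-1)-e^{-w}$.

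By the classical Jacobi transformation, $\Theta(t_n)=\sqrt{\pi/t_n}\,(1+o(1))\asymp n^{3/10}$, so the theorem reduces to producing a constant $\epsilon>0$ with $|\Theta(t_n-iv)|\le(1-\epsilon)\Theta(t_n)$ uniformly for $c_0 n^{-3/5}\le|v|\le\pi$; feeding this back gives \eqref{eq:hard-tailbound} for a suitable $E>0$ and $n$ large. I would prove this theta estimate by a Dirichlet dissection tied to the modular transformation of $\Theta$. Set $Q_n=\lfloor t_n^{-1/2}\rfloor$; for each $v$ pick a reduced fraction $p/q$ with $1\le q\le Q_n$ and $|\theta|\le1/(qQ_n)$, where $\theta=\tfrac{v}{2\pi}-\tfrac pq$, and put $w'=t_n-2\pi i\theta$, so $\re(w')=t_n$ and $|w'|\ge t_n$. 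Splitting $\ell$ into residue classes modulo $q$ and applying the generalized Jacobi transformation to each class gives the exact identity
$$
\Theta(t_n-iv)=\frac1q\sqrt{\frac{\pi}{w'}}\sum_{k\in\Z} e^{-\pi^2 k^2/(q^2 w')}\,\mathcal{G}_k(p,q),\qquad \mathcal{G}_k(p,q)=\sum_{r\bmod q} e^{2\pi i(pr^2+kr)/q},
$$
with $|\mathcal{G}_k(p,q)|\le 2\sqrt q$. Since $q\le Q_n$ forces $q^2 t_n\le1$, one has $\re\bigl(1/(q^2 w')\bigr)\ge1/(1+4\pi^2)$, so $\sum_k e^{-\pi^2 k^2\re(1/(q^2 w'))}\le C$ for an absolute constant $C$; hence $|\Theta(t_n-iv)|\le(C/\sqrt q)\,\Theta(t_n)(1+o(1))\le\tfrac12\Theta(t_n)$ as soon as $q$ exceeds a large absolute constant $q_0$. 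This settles every $v$ whose approximation has denominator $q>q_0$.

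It remains to dispose of the finitely many small denominators. For $q=1$ one has $p/q=0$ and $v\le2\pi/Q_n\asymp t_n^{1/2}$, and I would instead use the classical transformation $|\Theta(t_n-iv)|=\sqrt\pi\,(t_n^2+v^2)^{-1/4}|\psi(t_n-iv)|$ with $\psi(w)=1+2\sum_{k\ge1}e^{-\pi^2 k^2/w}$: the bound $|v|\ge c_1 t_n$ already gives $(t_n^2+v^2)^{-1/4}\le(1+c_1^2)^{-1/4}t_n^{-1/2}$, and $|\psi(t_n-iv)|$ fails to be $1+o(1)$ only once $v\gg t_n$, at which point $(t_n^2+v^2)^{-1/4}=o(t_n^{-1/2})$ wins outright. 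For each $q$ with $2\le q\le q_0$ one computes the quadratic Gauss sum $G(p,q):=\mathcal{G}_0(p,q)$ exactly: $|G(p,q)|/q$ equals $q^{-1/2}$ for odd $q$, $0$ for $q\equiv2\pmod{4}$, and $\sqrt{2/q}$ for $q\equiv0\pmod{4}$, hence is always $\le2^{-1/2}$; then either $w'$ is within $O(t_n)$ of $t_n$, in which case the $k\ne0$ terms of the transformed series are super-exponentially small and $|\Theta(t_n-iv)|\le(2^{-1/2}+o(1))\Theta(t_n)$, or $|w'|\gg t_n$ and the factor $\sqrt{\pi/|w'|}=o(t_n^{-1/2})$ again produces a gain. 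Combining the three cases yields $|\Theta(t_n-iv)|\le(1-\epsilon)\Theta(t_n)$ with $\epsilon=\min\{\tfrac12,\,1-2^{-1/2},\,1-(1+c_1^2)^{-1/4}\}-o(1)>0$, finishing the proof.

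The main obstacle is precisely this uniform theta estimate. One has to bound $|\Theta(t_n-iv)|$ simultaneously near the genuine singularity at $v\equiv0$, near the secondary near-singularities at $v\approx2\pi p/q$ with small $q$ (where it is the Gauss-sum cancellation, not a term-by-term absolute value bound, that provides the saving), and in the generic range, and verify that no admissible $v$ slips through all of them; the care lies in tracking the denominator $q$, the approximation error $\theta$, and the tail of the transformed theta series at once. Choosing the Dirichlet parameter correctly — here $Q_n=t_n^{-1/2}$, which keeps $q^2 t_n\le1$ yet still isolates $q=1$ from the bulk — is the key point. Everything else (the product-formula reduction, the logarithmic inequality, and the classical $\Theta(t_n)\sim\sqrt{\pi/t_n}$) is routine.
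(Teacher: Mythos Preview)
Your proposal is correct and takes a genuinely different route from the paper. Both arguments begin identically by passing to $-\log|\phi_t(v)|$ and dropping all but one $1$-parameter sub-family of the double product, but the paper keeps the slice $j=1$, whose exponents $\tfrac12 k(k+1)$ are triangular numbers and lead to the theta function $J_2$, whereas you keep $k=2$, whose exponents $(j+1)^2-1$ lead to $\Theta=J_3$. This choice has a real consequence: under the modular inversion $z\mapsto-1/z$ the paper's $J_2$ is sent to $J_4$, and the crude pointwise bound $|J_4(x+iy)|\le J_3(iy)$ already gives a uniform saving, so the paper finishes with a single transformation and a three-case split in $|u|$ (roughly $|u|\lesssim t$, $t^{3/4}\lesssim|u|\lesssim t^{1/2}$, $|u|\gtrsim t^{1/2}$). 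Your $\Theta=J_3$ is self-dual under inversion, so the same crude bound yields no gain at all near the secondary cusps $v\approx 2\pi p/q$; you are therefore forced into a genuine circle-method argument---Dirichlet dissection, Poisson summation in residue classes, and the explicit evaluation of quadratic Gauss sums $|\mathcal G_0(p,q)|/q\le 2^{-1/2}$ for $q\ge2$. What your approach buys is a method that would adapt with no change to any slice whose exponents are an arithmetic progression of perfect squares; what the paper's buys is brevity, since the single bound $|J_4|\le J_3$ replaces your entire small-denominator analysis. One point in your sketch that needs a little more care is the small-$q$ case: the dichotomy ``$|w'|=O(t_n)$ versus $|w'|\gg t_n$'' should be made quantitative by fixing, for each $q\le q_0$, a threshold $M=M(q)$ and splitting at $|w'|=Mt_n$, since in the intermediate regime the $k\neq0$ Gauss sums are only bounded by $2\sqrt q$ and the term-by-term absolute-value bound does not immediately give a saving without tracking that $M$ can be chosen after $q_0$.
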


As preparation for the proof, replace $u/\sigma_n$ by $u$, so in view of the asymptotics \eqref{eq:variance-asym} of the variance, the statement we need to prove becomes that 
\begin{equation}\label{eq:phitn-bound}
|\phi_{t_n}(u)|\le \exp\left(-En^{3/10}\right)
\end{equation}
for some constant $E>0$, all sufficiently large $n$, and all $u$ such that
$D'n^{-3/5}\le |u|\le \pi$, where $D'=2D/(\sqrt{5/6} X^{-1})$. More generally, we will prove that for any $\epsilon>0$, there exist numbers $\beta,\delta>0$ 
such that if $0<t<\beta$ and $\epsilon t \le |u| \le \pi$ then
\begin{equation}
\label{eq:hard-tailbound-reduction}
|\phi_t(u)| \le \exp\left(-\delta t^{-1/2}\right).
\end{equation}
The bound \eqref{eq:phitn-bound} then follows by substituting $t=t_n$ (refer to \eqref{eq:saddle-point-solution}).

Now observe that
\begin{align*}
\log \phi_t(u) &= \log G(e^{-t+iu})-\log G(e^{-t}) = h(t-iu)-h(t)
\\ &=
\sum_{m=1}^\infty \frac{1}{m}(f(m(t-iu)) - f(mt)).
\end{align*}
By the definition of $f(t)$ in \eqref{eq:foft-def}, clearly $\re(f(t-iu))\le \re(f(t))$ for any $u\in\R, t>0$, so
\begin{align*}
\re\big(\log &\phi_t(u)\big) \le 
\sum_{m=1}^\infty \frac{1}{m}
\re(f(m(t-iu))-f(mt)) \le  \re(f(t-iu)-f(t)) 
\\ &\le
\re \left[ \sum_{j,k=1}^\infty \Big(\exp\left(-\tfrac12 jk(j+k)(t-iu)\right)
-\exp\left(-\tfrac12 jk(j+k)t\right) \Big)\right].
\end{align*}
In this last expression, the real part of each of the summands is nonpositive, so we can omit as many of the terms as we like and still get an upper bound. We focus on the subset $j=1$ of the summation range, leading to the bound
\begin{align}
\re\left(\log \phi_t(u)\right) & \le \re\left[ \sum_{k=1}^\infty 
\exp\left(-\tfrac12 k(k+1)(t-iu)\right) 
-
\sum_{k=1}^\infty 
\exp\left(-\tfrac12 k(k+1)t\right) \right]
\nonumber \\ &
= \re(\lambda(t-iu))-\lambda(t),
\label{eq:log-charfun-rebound}
\end{align}
where we denote
$$ \lambda(z) = \sum_{k=0}^\infty \exp\left(-\tfrac12 k(k+1)z \right). $$

\begin{prop}
\label{prop:lambda-bounds}
For any $\epsilon>0$, there exist numbers $\beta, \gamma>0$ and \hbox{$0<\rho<1$} such that
for all $t,u$ satisfying $0<t<\beta$ and $\epsilon t \le |u| \le \pi$, the bounds
\begin{equation}
\label{eq:lambda-bounds}
\lambda(t) \ge \gamma t^{-1/2}, \qquad \frac{|\lambda(t-iu)|}{\lambda(t)} \le 1-\rho, \end{equation}
hold.
\end{prop}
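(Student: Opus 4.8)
The plan is to treat the two assertions separately: the lower bound on $\lambda(t)$ is elementary, while the bound on the ratio is the substance of the proposition and is where the modular properties of the Jacobi theta functions are needed.

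For the lower bound, note that $x\mapsto\exp(-\tfrac12 t x(x+1))$ is positive and decreasing on $[0,\infty)$, so comparison of the series $\lambda(t)$ with the corresponding integral gives $\lambda(t)\ge\int_0^\infty\exp(-\tfrac12 t x(x+1))\,dx$, and completing the square turns the right-hand side into $e^{t/8}\int_{1/2}^\infty e^{-tx^2/2}\,dx\ge e^{t/8}\bigl(\sqrt{\pi/(2t)}-\tfrac12\bigr)$, which is at least $\tfrac12\sqrt{\pi/(2t)}$ once $t$ is below some absolute constant. Thus the first bound holds with $\gamma=\tfrac12\sqrt{\pi/2}$ and any sufficiently small $\beta$. (Equivalently one can invoke the exact Jacobi transformation $\lambda(t)=\tfrac12 e^{t/8}\sqrt{2\pi/t}\sum_{m\in\Z}(-1)^m e^{-2\pi^2m^2/t}$, whose alternating series is $\ge 1-2e^{-2\pi^2/t}$; this identity is in any case the engine of the second bound.)

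For the ratio bound I would apply the same Jacobi transformation to $\lambda(t-iu)$ as well, and set $w=-2\pi/(u+it)$, a point of the upper half-plane with $\re(w)<0$. One obtains the exact identity
\[
\frac{|\lambda(t-iu)|}{\lambda(t)}=\left(\frac{\im(w)}{|w|}\right)^{\!1/2}\frac{|\theta_4(w)|}{\theta_4(2\pi i/t)},\qquad \theta_4(z):=\sum_{m\in\Z}(-1)^m e^{i\pi z m^2},
\]
where $\theta_4(2\pi i/t)\ge 1-2e^{-2\pi^2/t}$ is within $o(1)$ of $1$, and where $\im(w)/|w|=t/\sqrt{u^2+t^2}\le(1+\epsilon^2)^{-1/2}$ precisely because $|u|\ge\epsilon t$. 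So it suffices to show $(\im(w)/|w|)^{1/2}|\theta_4(w)|\le 1-\rho_0$ uniformly in the admissible range, for some $\rho_0>0$. I would split $[\epsilon t,\pi]$ at $|u|=\delta\sqrt t$ with $\delta$ small (and $\beta\le\delta^2$). On the range $\epsilon t\le|u|\le\delta\sqrt t$ one has $\im(w)=2\pi t/(u^2+t^2)\ge\pi/\delta^2$, so $|\theta_4(w)|\le\theta_3(e^{-\pi\im(w)})\le 1+3e^{-\pi^2/\delta^2}$, and the product is at most $(1+\epsilon^2)^{-1/4}(1+3e^{-\pi^2/\delta^2})<1$ once $\delta$ is small.

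The remaining range $\delta\sqrt t\le|u|\le\pi$ is the heart of the matter, and the step I expect to be the main obstacle. Here $w$ has bounded imaginary part and lies within $O(t)$ of the negative real axis, near the rational points $-p/q$ (in lowest terms) that $\re(w)$ approaches as $u\to 2\pi q/p$; near such a point $|\theta_4(w)|$ can be of order $t^{-1/2}$, so a triangle-inequality bound is useless and one genuinely must extract the cancellation in the theta sum, which is exactly what the modular transformation of the Jacobi theta functions encodes. I would reduce $w$ toward $i\infty$ by the Euclidean/Farey algorithm: repeatedly subtract an integer (using $\theta_4(z+2)=\theta_4(z)$, $\theta_4(z+1)=\theta_3(z)$) and apply the inversion $z\mapsto-1/z$ (using $\theta_2(-1/z)=\sqrt{-iz}\,\theta_4(z)$ and its two companions), carrying along the accumulated automorphy factor, of modulus $|cz+d|^{-1/2}$ at each inversion. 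Because $\im(w)\asymp t$ while $|w|$ lies between two fixed positive constants and $t^{-1/2}$ throughout this range, the reduction terminates after boundedly many steps at a point whose imaginary part is bounded below by a fixed constant, where the current theta function is either $O(1)$ (a $\theta_3$- or $\theta_4$-type cusp) or exponentially small (a $\theta_2$-type cusp, which occurs exactly when $p$ is even, since $\theta_4$ vanishes at the cusp $0$). Multiplying back through, the blow-up of $|\theta_4(w)|$ near $-p/q$ is of order $(q\,|w+p/q|)^{-1/2}$ and is exactly cancelled by the smallness of $\im(w)/|w|\asymp q\,\im(w)/p$ there, so that $(\im(w)/|w|)^{1/2}|\theta_4(w)|\le\bigl(\im(w)/(p\,|w+p/q|)\bigr)^{1/2}\le p^{-1/2}$. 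Since every $-p/q$ reached in the admissible range has $p\ge 3$ (the extreme case $u\approx 2\pi/3$, $w\approx-3$, yields exactly $1/\sqrt3$, while $w\approx-2$ is harmless because $\theta_4$ vanishes at $-2$), the product is at most $\max\bigl((1+\epsilon^2)^{-1/4},\,3^{-1/2}\bigr)<1$ everywhere, and one may take $\rho$ to be any number below $1-\max\bigl((1+\epsilon^2)^{-1/4},3^{-1/2}\bigr)$. The delicate point is precisely making this Farey reduction fully effective and uniform over all $u\in[\delta\sqrt t,\pi]$ and all small $t$: the margin provided by the factor $(1+\epsilon^2)^{-1/4}$ is slim, so the bookkeeping of automorphy factors and cusp types must be carried out with no lossy estimates.
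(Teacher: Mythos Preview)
Your treatment of the lower bound on $\lambda(t)$ is fine, and your first range $\epsilon t\le|u|\le\delta\sqrt t$ is essentially the paper's approach there as well. The divergence is in your ``hard range'' $\delta\sqrt t\le|u|\le\pi$, where you propose a full Farey/Euclidean reduction of $w$ through the modular group, tracking cusp types and automorphy factors. You yourself flag this as the delicate step and leave it as a sketch. In fact this machinery is unnecessary, and the paper's argument is much simpler.

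The key observation you are missing is that after the single inversion $z\mapsto-1/z$ (giving $|\lambda(t-iu)|/\lambda(t)\le(1+o(1))\bigl(t^2/(u^2+t^2)\bigr)^{1/4}\theta_3\bigl(2\pi i t/(u^2+t^2)\bigr)$ via the crude bound $|\theta_4(x+iy)|\le\theta_3(iy)$), no further modular arithmetic is needed. When $|u|\ge M\sqrt t$ for a large constant $M$, the argument $y=2t/(u^2+t^2)\le 2/M^2$ of $\theta_3(iy)$ is small, and one more application of the $\theta_3$ inversion gives $\theta_3(iy)\le(1+o(1))y^{-1/2}$. The exponents then cancel perfectly:
\[
\Bigl(\frac{t^2}{u^2+t^2}\Bigr)^{1/4}\Bigl(\frac{u^2+t^2}{2t}\Bigr)^{1/2}=\frac{(u^2+t^2)^{1/4}}{\sqrt2}\le\frac{(\pi^2+\beta^2)^{1/4}}{\sqrt2}<1,
\]
uniformly, since $|u|\le\pi$. (Note the paper normalizes so that $|u|\le 1$ rather than $\pi$.) The intermediate region $\delta\sqrt t\le|u|\le M\sqrt t$ is handled by a trivial third case: here $y$ is bounded below by a constant, so $\theta_3(iy)$ is bounded above by a constant $L$, while $\bigl(t^2/(u^2+t^2)\bigr)^{1/4}\le t^{1/8}$ can be made as small as desired. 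So the paper uses exactly two modular inversions and a three-way split on $|u|$, with no Farey reduction at all.

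Your Farey strategy could presumably be pushed through, but it is both harder and less robust: the bound $p^{-1/2}$ you extract near each cusp is tight only at the cusp itself, and covering the gaps between cusps uniformly is exactly the bookkeeping you worry about. The paper's argument sidesteps this entirely by throwing away the oscillation in $\theta_4$ at the earliest opportunity and letting the prefactor $(u^2+t^2)^{1/4}$ do all the work.
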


Proposition~\ref{prop:lambda-bounds} is enough to prove \eqref{eq:hard-tailbound-reduction}, and hence Theorem~\ref{thm:hard-tailbound}, since \eqref{eq:log-charfun-rebound} and \eqref{eq:lambda-bounds} imply (under the assumed conditions on $t,u$) that
\begin{align*}
|\phi_t(u)| &= \exp\left(\re(\log \phi_t(u))\right) \le \exp\big(-\lambda(t)+(1-\rho)\lambda(t)\big)
\\ &= \exp(-\rho \lambda(t)) \le 
\exp\left(-\rho\gamma \, t^{-1/2}\right).
\end{align*}

To prove the proposition, we will use the fact that $\lambda(z)$ can be related to the Jacobi theta functions. Define
\begin{align*}
\theta_2(q) &= \sum_{k=-\infty}^\infty q^{(k+1/2)^2} = 2q^{1/4} \sum_{k=0}^\infty q^{k(k+1)}, 
& J_2(z) = \theta_2(e^{i\pi z}), \\
\theta_3(q) &= \sum_{k=-\infty}^\infty q^{k^2},  & J_3(z) = \theta_3(e^{i\pi z}), \\
\theta_4(q) &= \sum_{k=-\infty}^\infty (-1)^k q^{k^2},
&
J_4(z)=  \theta_4(e^{i\pi z}).
\end{align*}
The functions $J_2(z), J_3(z), J_4(z)$ are holomorphic functions in the upper half plane $\mathbb{H}=\{x+iy\,:\,y>0\}$. They are known to satisfy the modular transformation relations
\begin{align}
\label{eq:modularity1} J_2(z) &= (-iz)^{-1/2} J_4(-1/z), \\
\label{eq:modularity2} J_3(z) &= (-iz)^{-1/2} J_3(-1/z),
\end{align}
where $w\mapsto w^{1/2}$ is the principal branch of the square root function;
see \cite[p.~181]{rauch-lebowitz}.
In particular, note that for real $y>0$, $J_3(i y)=1+O(e^{-\pi y})$ and $J_4(iy)=1+O(e^{-\pi y})$ as $y\to\infty$. Therefore \eqref{eq:modularity1} and \eqref{eq:modularity2} imply that as $y\to 0$ we have
\begin{align}
\label{eq:jacobi1-asym}
J_2(iy) &= \frac{1}{\sqrt{y}} J_4(i/y) = \frac{1}{\sqrt{y}}\left(1+O\left(e^{-\pi/y}\right)\right), \\
\label{eq:jacobi2-asym}
J_3(iy) &= \frac{1}{\sqrt{y}} J_3(i/y) = \frac{1}{\sqrt{y}}\left(1+O\left(e^{-\pi/y}\right)\right).
\end{align}
Furthermore, we also have trivially that $|J_4(x+iy)|\le J_3(iy)$.

Now we can relate the previous discussion regarding $\lambda(t-iu)$ to the functions $J_2(z), J_3(z), J_4(z)$, by noting that
\begin{align*}
\lambda(t-iu) &= \sum_{k=0}^\infty\exp\left(-\tfrac12k(k+1)(t-iu)\right) 
= \tfrac12 e^{t/8} e^{-iu/8} J_2\left(\frac{u+it}{2\pi}\right).
\end{align*}
In particular, when $t\searrow 0$ we have the bound $\lambda(t)=\tfrac12 e^{t/8}J_2(it/2\pi)=\tfrac12\left(1+O\left(e^{-2\pi^2/t}\right)\right)\sqrt{2\pi/t} \ge \gamma t^{-1/2}$, which was the first claim of Proposition~\ref{prop:lambda-bounds}. For the second claim, by modifying the constants we can forget about the $1/\pi$ factor, and prove instead that 
$$ \frac{|\lambda(\pi(t-iu))|}{\lambda(\pi t)} = \frac{\left|J_2\left(\frac{u+it}{2}\right)\right|}{J_2(it/2)}
$$
is bounded away from $1$ (more precisely, is $<\rho$ for some number $\rho<1$), under the assumption that: 
\begin{enumerate}
\item $\epsilon t\le |u|\le 1$ where $\epsilon>0$ is given; 
\item $0<t<\beta$ where we are free to fix $\beta$ as small as we please.
\end{enumerate}
(Naturally, $\beta$ and $\rho$ will depend on $\epsilon$.) Observe therefore that, thanks to \eqref{eq:modularity1}, we have
\begin{align}
\nonumber 
\frac{\left|J_2\left(\frac{u+it}{2}\right)\right|}{J_2(it/2)} &= \frac{(t/2)^{1/2}}{\left|(u+it)/2\right|^{1/2}}
\left(1+O\left(e^{-2\pi/t}\right)\right) 
\left| J_4\left(-\frac{2u}{u^2+t^2}+i\frac{2t}{u^2+t^2}\right)\right|
\\ &\le
\left(1+O\left(e^{-2\pi/t}\right)\right)  \left( \frac{t^2}{u^2+t^2}
\right)^{1/4} J_3\left(i\frac{2t}{u^2+t^2}\right).
\label{eq:bound-from1}
\end{align}
To bound this expression away from $1$, we divide into three cases:

\paragraph{Case 1.} Assume that $\epsilon t\le |u| \le t^{3/4}$. In this case, 
$$ \left(\frac{t^2}{t^2+u^2}\right)^{1/4} \le \left(\frac{t^2}{t^2+\epsilon^2 t^2}\right)^{1/4} = \left(\frac{1}{1+\epsilon^2}\right)^{1/4}. $$
The quantity on the right-hand side is smaller than $1$, and we denote it $1-\rho_1$. Second, making the further assumption that $t<1$ (which simply puts the constraint $\beta<1$ on our ultimate choice of $\beta$), we have
$$ \frac{2t}{u^2+t^2} \ge \frac{2t}{t^{3/2}+t^2} \ge \frac{2t}{2t^{3/2}} = \frac{1}{\sqrt{t}}, $$
so that (by the bound for $J_3(iy)$ as $y\to\infty$) $J_3\left(i\frac{2t}{u^2+t^2}\right) = 1+O(e^{-\pi t^{-1/2}})$ as $t\to 0$. Combining the last two estimates, we get that
the right-hand side of \eqref{eq:bound-from1} is bounded by
$$\left(1+O\left(e^{-2\pi/t}\right)\right)\left(1+O\left(e^{-\pi/\sqrt{t}}\right)\right)\times(1-\rho_1), $$
which is bounded by $1-\rho_1/2$ if $\beta$ is assumed to be small enough.

\paragraph{Case 2.} Assume that $t^{3/4} \le |u| \le M t^{1/2}$, where $M$ is some large enough constant whose value will be specified shortly. In this case, note that, again using the assumption that $t<1$, we have 
$$
\frac{2t}{u^2+t^2} \ge \frac{2t}{M^2 t+t^2} \ge \frac{2t}{(M^2+1) t}=\frac{2}{M^2+1}. $$
The function $s \mapsto J_3(is)$ of a positive real variable $s$ is monotone decreasing. So, after fixing $M$, we get that $J_3\left(i\frac{2t}{u^2+t^2}\right)$ is bounded by $L:=J_3\left(\frac{2i}{M^2+1}\right)$ in the range of values under discussion.
Furthermore,
we have
$$ \left(\frac{t^2}{t^2+u^2}\right)^{1/4} \le \left(\frac{t^2}{t^2+t^{3/2}}\right)^{1/4} \le \left(\frac{t^2}{t^{3/2}}\right)^{1/4}=t^{1/8}. $$
By restricting $\beta$ (and therefore $t$) to be less than $\left(\tfrac12 L^{-1}\right)^8$, we ensure that 
$$
\left(\frac{t^2}{t^2+u^2}\right)^{1/4} J_3\left(i\frac{2t}{t^2+u^2}\right) \le t^{1/8} L \le 1/2,
$$
which, when combined with \eqref{eq:bound-from1}, proves the claim in this case.

\paragraph{Case 3.} Finally, assume that $M t^{1/2} \le |u| \le 1$ (and recall that we are still free to specify the value of $M$). In this case, we have
$$
\frac{2t}{t^2+u^2} \le \frac{2t}{t^2+M^2 t} \le \frac{2t}{M^2 t} = \frac{2}{M^2}.$$ 
Using \eqref{eq:jacobi2-asym}, we see that if we assume $M$ to be large enough, this forces $y=2t/(t^2+u^2)$ to be such that $J_3(iy)\le 1.01/\sqrt{y}$. Selecting such a value of $M$, 
and restricting $\beta$ to be small enough to ensure that the $\left(1+O\left(e^{-2\pi/t}\right)\right)$ term in \eqref{eq:bound-from1} is also $<1.01$,
we get therefore that
the right-hand side of \eqref{eq:bound-from1} is bounded by
\begin{align*}
1.03 \left(\frac{t^2}{t^2+u^2}\right)^{1/4} \frac{(t^2+u^2)^{1/2}}{(2t)^{1/2}} &= \frac{1.03}{\sqrt{2}}(t^2+u^2)^{1/4} 
\le 0.8 (1+t^2)^{1/4}.
\end{align*}
If we assume further that $\beta<1/2$, this is bounded by $0.9$, which proves the claim in this case.

\bigskip
Cases 1, 2 and 3 above cover all the possibilities. We have therefore proved Proposition~\ref{prop:lambda-bounds}, which as we have seen implies Theorem~\ref{thm:hard-tailbound}.
\qed

\begin{proof}[Proof of Theorem~\ref{thm:local-clt}]
Decompose the integral in \eqref{eq:localclt-sufficient} as
\begin{align}
\int_{-\pi\sigma_n}^{\pi \sigma_n} 
\phi_{t_n}(u/\sigma_n) & e^{-inu/\sigma_n}\,du 
=
\int_{-D n^{1/5}}^{D n^{1/5}} 
\phi_{t_n}(u/\sigma_n)e^{-inu/\sigma_n}\,du 
\nonumber \\ &+
\int_{[-\pi\sigma_n,\pi \sigma_n]\setminus[-Dn^{1/5},Dn^{1/5}]}
\phi_{t_n}(u/\sigma_n)e^{-inu/\sigma_n}\,du.
\label{eq:localclt-sufficient-decomp}
\end{align}
Here, the first integral converges to $\int_{-\infty}^\infty e^{-u^2/2}\,du=\sqrt{2\pi}$ by 
Theorem~\ref{thm:nonlocal-clt}, Lemma~\ref{lem:easy-tailbound} and the dominated convergence theorem. The second integral can be seen using Theorem~\ref{thm:hard-tailbound}
to satisfy the bound
$$
\left|
\int_{[-\pi\sigma_n,\pi \sigma_n]\setminus[-Dn^{1/5},Dn^{1/5}]}
\phi_{t_n}(u/\sigma_n)e^{-inu/\sigma_n}\,du 
\right|
\le 2\pi \sigma_n \exp\left(-E n^{3/10}\right),
$$
and therefore converges to $0$. It follows that the left-hand side of \eqref{eq:localclt-sufficient-decomp} converges to $\sqrt{2\pi}$, which is precisely \eqref{eq:localclt-sufficient}. As we have seen, that result implies Theorem~\ref{thm:local-clt}.
\end{proof}

\section{Proof of Theorem~\ref{thm:su3-asym}}

\label{sec:proof-su3-asym}

Rewrite \eqref{eq:prob-dim-n} in the form
\begin{equation}
\label{eq:rofn-formula}
r(n) = G(e^{-t_n}) \exp(n t_n) \prob_{t_n}(N=n).
\end{equation}
The last factor on the right-hand side is given asymptotically by \eqref{eq:local-clt}, so the proof of Theorem~\ref{thm:su3-asym} reduces to writing down explicitly the asymptotics of the first two factors, using Theorem~\ref{thm:hoft-asym} and the definition of $t_n$ in \eqref{eq:saddle-point-solution}, and then combining the pieces. First, we have
that
\begin{equation} 
\label{eq:final1}
\exp(nt_n) = \exp\left( \tau_1 n^{2/5} - \tau_2 n^{3/10} - \tau_3 n^{1/5} \right).
\end{equation}
Second, to expand $G(e^{-t_n})$, recall that $t_n$ can be expressed in the form $t_n=\tau_1 q^6(1-q(u+vq))$ as in \eqref{eq:tofn-qexpansion}, in terms of the parameters $u=\tau_2/\tau_1$, $v=\tau_3/\tau_1$, $q=n^{-1/10}$ defined in \eqref{eq:tofn-qexpansion-params}. We then have that
\begin{align}
G(e^{-t_n}) &=
\exp\bigg(
\mu_1 t_n^{-2/3} + \mu_2 t_n^{-1/2} - \tfrac13 \log t_n 
\nonumber \\ & \qquad\qquad\qquad\qquad\qquad + \wzeta'(0) + \tfrac13\log 2 + O(n^{-3/10})
\bigg),
\label{eq:final2}
\end{align}
and each of the non-constant summands in the expansion can be evaluated asymptotically using a Taylor expansion, similar to the derivation in Section~\ref{sec:saddle-point}. 
The computation is a bit more tedious, since this time we need to consider the expansion up to the fourth order to attain the desired precision up to a $o(1)$ error. 

Start with the term proportional to $t_n^{-2/3}$. We have (compare with \eqref{eq:saddle-point-term1})
\begin{align*}
t_n^{-2/3} &= \left(\tau_1 q^6\right)^{-2/3} \left(1-q(u+vq)\right)^{-2/3}
\\ &
= \tau_1^{-2/3} q^{-4} \Bigg( 1+ \frac23 q(u+vq)
+\frac12\cdot\frac{2\cdot5}{3\cdot3}q^2(u+vq)^2 
\\ & \qquad 
+
\frac16\cdot \frac{2\cdot5\cdot8}{3\cdot3\cdot3}q^3(u+vq)^3
+\frac{1}{24}\cdot\frac{2\cdot5\cdot8\cdot11}{3\cdot3\cdot3\cdot3} q^4(u+vq)^4+O(q^5)
\Bigg)
\\ &=
\tau_1^{-2/3} \Bigg(
q^{-4} +
\left(\frac23 u \right) q^{-3} +
\left(\frac23v + \frac59 u^2 \right) q^{-2}
\\ & \qquad \qquad +
\left(\frac{10}{9}uv+\frac{2\cdot5\cdot8}{6\cdot3\cdot3\cdot3}u^3 \right) q^{-1} 
\\ & \qquad \qquad +
\left(\frac59v^2+\frac{2\cdot5\cdot8}{6\cdot3\cdot3\cdot3} 3u^2 v + \frac{2\cdot5\cdot8\cdot11}{24\cdot3\cdot3\cdot3\cdot3} u^4 \right) q^0 +
O(q)
\Bigg).
\end{align*}
Similarly, for the $t_n^{-1/2}$ term we have
\begin{align*}
t_n^{-1/2} &= \left(\tau_1 q^6\right)^{-1/2} \left(1-q(u+vq)\right)^{-1/2}
\\ &
= \tau_1^{-1/2} q^{-3} \Bigg( 1+ \frac12 q(u+vq)
+\frac12\cdot\frac{1\cdot3}{2\cdot2}q^2(u+vq)^2 
\\ & \qquad \qquad\qquad\qquad\qquad\qquad\ 
+
\frac16\cdot \frac{1\cdot3\cdot5}{2\cdot2\cdot2}q^3(u+vq)^3+O(q^4)
\Bigg)
\\ &=
\tau_1^{-1/2} \Bigg(
q^{-3} +
\left(-\frac12 u \right) q^{-2} +
\left(\frac12v + \frac{1\cdot3}{2\cdot 2\cdot 2} u^2 \right) q^{-1}
\\ & \qquad \qquad +
\left(\frac{1\cdot3}{2\cdot2\cdot2}2uv+\frac{1\cdot3\cdot5}{6\cdot2\cdot2\cdot2}u^3 \right) q^{0} +
O(q)
\Bigg).
\end{align*}
The contribution of the $\log t_n$ term is simpler to understand, since after exponentiating it becomes
\begin{align*}
\exp\left(-\tfrac13\log t_n\right) &= t_n^{-1/3} =
\left(\tau_1 q^6\right)^{-1/3} (1+q(u+vq))^{-1/3} \\ &= \tau_1^{-1/3} q^{-2} (1+ O(q)),
\end{align*}
and the error term $1+O(q)=1+O(n^{-1/10})$, being a multiplicative term outside the exponential sum, is sufficient for our purposes.

Substituting the expansions above into \eqref{eq:final2} and collecting terms in powers of $n^{1/10}$, we conclude that $G(e^{-t_n})$ has an asymptotic expansion of the form
\begin{align}
\nonumber
G(e^{-t_n}) &= \tau_1^{-1/3}n^{1/5} \exp\Big( B_1 n^{2/5} + B_2 n^{3/10} + B_3 n^{1/5} + B_4 n^{1/10}
\\ & \ \ \ \qquad \ \ \ \ \qquad\qquad\ \ \ \   + B_5 + \wzeta'(0)+\tfrac13\log 2 +O(n^{-1/10})
\Big),
\label{eq:final3}
\end{align}
where $B_1,B_2,B_3,B_4$ are constants, having the elaborate definitions
\begin{align*}
B_1 &= \mu_1 \tau_1^{-2/3}
,\\
B_2 &= \frac23 \mu_1 \tau_1^{-2/3} u + \mu_2 \tau_1^{-1/2}
,\\
B_3 &= \frac23 \mu_1 \tau_1^{-2/3} v + \frac59 \mu_1\tau_1^{-2/3} u^2
+\frac12 \mu_2 \tau_1^{-1/2} u
,\\
B_4 &= \frac{10}{9} \mu_1\tau_1^{-2/3} u v + \frac{40}{81} \mu_1\tau_1^{-2/3} u^3
+\frac12 \mu_2\tau_1^{-1/2} v + \frac38 \mu_2\tau_1^{-1/2} u^2,
\\
B_5 &=
\frac59 \mu_1\tau_1^{-2/3} v^2 + \frac{40}{27} \mu_1\tau_1^{-2/3}u^2 v + \frac{110}{243} \mu_1\tau_1^{-2/3} u^4 
\\ & \qquad \qquad\qquad\ \ \ \ 
+ \frac34 \mu_2\tau_1^{-1/2}uv +\frac{5}{16} \mu_2\tau_1^{-1/2}u^3.
\end{align*}
The definitions of $B_1, B_2, B_3, B_4, B_5$ can be further simplified. The details of this purely routine algebraic simplification can be found in the companion package \cite{romik-mathematica}. This results in the more pleasing expressions
\begin{align*}
B_1 &= 3X^2,
\\
B_2 &=-\frac{7}{10} X^{-1}Y,
\\
B_3 &=-\frac{3}{100} X^{-4}Y^2,
\\
B_4 &= -\frac{11}{3200} X^{-7}Y^3,
\\
B_5 &= -\frac{1}{2560} X^{-10}Y^4.
\end{align*}
We are finally ready to derive \eqref{eq:su3-asym}. Multiplying \eqref{eq:local-clt}, \eqref{eq:final1}, and \eqref{eq:final3}, we get using \eqref{eq:rofn-formula} that
\begin{align*}
r(n) &= (1+o(1))\left(
\tau_1^{-1/3} \frac{\sqrt{3}}{\sqrt{5\pi}}X \exp\left(B_5 +\wzeta'(0)+\tfrac13\log 2\right)
\right)
n^{-3/5} 
\\ & \quad \times \exp\left( (B_1+\tau_1)n^{2/5}+(B_2-\tau_2)n^{3/10}+(B_3-\tau_3)n^{1/5}+B_4 n^{1/10}
\right).
\end{align*}
This expansion is precisely of the form \eqref{eq:su3-asym}, with (one can easily check) our claimed values \eqref{eq:A1formula}--\eqref{eq:A4formula} for the constants $A_1,A_2,A_3,A_4$ and the value \eqref{eq:mult-const-altdef} for $K$, which in conjunction with Theorem~\ref{thm:borwein-dilcher} implies \eqref{eq:multiplicativeconst-def}. The proof of Theorem~\ref{thm:su3-asym} is complete.
\qed

\section{Final comments}

\label{sec:final-comments}

\subsection{Additional summation identities}

Our proof of Theorem~\ref{thm:eisenstein} relied on computer experimentation to discover the coefficients $\alpha_{n,j}$ in \eqref{eq:coeffs-guess1}. (Moreover, as we commented in Section~\ref{sec:bernoulli}, the coefficients $\beta_{n,k}$ could also be guessed in principle although we happened to know them in advance.) This suggests the possibility of discovering and proving additional identities, and perhaps even automating the process leading to such results. For example, by slightly generalizing the system of linear equations \eqref{eq:lineqs} to cover different mod $6$ congruence classes, we were able to discover empirically the following additional identities:
\begin{align}
\frac{B_{6n-2}}{6n-2} &=- \frac{1}{2(6n+1)}\binom{4n}{2n} \sum_{k=1}^n \binom{2n}{2k-1} P(n,k) \frac{B_{2n+2k-2}}{2n+2k-2} \cdot \frac{B_{4n-2k}}{4n-2k}, 
\label{eq:mod6-identity1}
\\
\frac{B_{6n}}{6n} &= -\frac{2}{3(6n+1)}\cdot \frac{(4n+1)!}{(2n)!^2} \sum_{k=0}^n \binom{2n}{2k} Q(n,k) \frac{B_{2n+2k}}{2n+2k} \cdot \frac{B_{4n-2k}}{4n-2k},
\label{eq:mod6-identity2}
\end{align}
where
\begin{align*}
P(n,k) &= (2n-1)^2-4(k-1)(n-k), \\[2pt]
Q(n,k) &= \frac{(n^2-kn+k^2)(4(n^2-kn+k^2)-1)-6kn(n-k)}{n(2k+1)(2n-2k+1)}.
\end{align*}
Since the discovery was based on the summation technique of Sections~\ref{sec:bernoulli} and \ref{sec:eisenstein}, these identities should arise as specializations of similar identities for the Eisenstein series. We did not work out the details of a proof, but our methods ought to apply.

\subsection{Open problems}

This work saw the fruitful meeting of ideas from complex analysis, number theory, asymptotic analysis, combinatorics, probability theory, and representation theory. Our results suggest several open problems that remain ripe for further investigation. Among them are the following:

\begin{enumerate}
\item Find additional lower order terms in the asymptotic expansion for $r(n)$.
\item Prove analogous versions of our results for other connected simple compact Lie groups and simple Lie algebras.
\item Find a direct proof of the result \eqref{eq:wzeta-trivialzeros} that $\wzeta(-n)=0$ for integer $n\ge1$ without relying on the Bernoulli number summation identity~\eqref{eq:bernoulli}, thereby deducing \eqref{eq:bernoulli} in a new way.
\item Use ideas from the theory of modular forms to reprove \eqref{eq:eisenstein}\footnote{Update added in revision: following our description of this problem in the original version of this paper, a proof of \eqref{eq:eisenstein} based on known results about modular forms was found recently by Mertens and Rolen \cite{mertens-rolen}.} and to discover and classify other identities with a similar flavor. Do modular forms give additional insight into the properties and significance of the $SU(3)$ zeta function $\wzeta(s)$?

\end{enumerate}

\end{document}